\documentclass[12pt]{amsart}

\usepackage{TDefn}

\DeclareMathOperator{\Obs}{Obs}
\renewcommand{\div}{\mathrm{div}}
\DeclareMathOperator{\Disc}{Disc}
\DeclareMathOperator{\Diff}{Diff}

\DeclareMathOperator{\prong}{prong}

\newcommand{\hol}{\mathrm{hol}}
\newcommand{\antihol}{\overline{\mathrm{hol}}}

\renewcommand{\top}{\mathrm{top}}

\newcommand{\mainbound}{m_1(\mu)/2 -  2g/3   }
\newcommand{\alternatemainbound}{\frac{m_1(\mu)}{6}  -  \sum_{j > 1}  \frac{jm_j(\mu)}{3}  - 2/3 }

\newcommand{\uGamma}{\underline{\Gamma}}

\newcommand{\geometricBound}{g - 3d/2 + 1}

\title{Stable homology of strata of abelian differentials}

\author{Philip Tosteson}

\begin{document}

\begin{abstract} We show that the homology of strata of abelian differentials stabilizes in a range where the number of simple zeros is large relative to the homological degree.   In this range, we show that the rational cohomology agrees with the restriction of the tautological classes to the stratum, and that the rational Picard group is trivial for unprojectivized strata.  Our proof method is to develop an $h$-principle for these strata, valid in a range of homological degrees that increases with the number of simple zeros.  
The same approach also applies to higher order differentials.
 \end{abstract}

\maketitle


\section{Introduction}


The Hodge bundle over the moduli space of genus $g$  curves $\cH_g = \{C \in \cM_g, \omega \in \Gamma(C, \Omega^1_C)\}$ admits a natural stratification into loci according to the multiplicities of the zeros of $\omega$.   These strata of abelian differentials play an important role in intersection theory of $\overline \cM_{g}$ and in Teichm\"uller dynamics.   Their connected components  were classified in a landmark paper of Kontsevich--Zorich \cite{KontsevichZorich}, but little has been known about their higher homology groups.    

 The homology of moduli spaces of curves stabilizes in the genus, as shown by Harer \cite{Harer}, and the stable rational homology was determined by Madsen--Weiss \cite{MadsenWeiss} by identifying the stable homology with the homology of an infinite loop space.  The purpose of this paper is to develop analogous results for strata of abelian differentials.

\subsection*{The stable rational cohomology} Let $g \in \bbN$ and  let $\mu = (\mu(1), \dots, \mu(n))$  be an integer partition of $2g-2$, so that $\sum_{i = 1}^n \mu(i) = 2g-2$.  The \textbf{stratum of abelian differentials} associated to this data is the space $$\cH_{g,\mu} := \{ C \in \cM_g, \omega \in H^0(\Omega^1_C) - 0 ~|~ \exists  p_1, \dots, p_n \in C \text{ distinct, }  {\rm div}(\omega) = \sum_{i} \mu(i) p_i\}.$$    Our first result computes the rational cohomology of $\cH_{g, \mu}$ in a range where the number of simple zeros is large.  To state it, let $m_j(\mu) := \#\{i ~|~ \mu(i) = j\}$ be the multiplicity of $j$ in $\mu$.    

\begin{thm}
	For $0< i \leq \mainbound$, the rational cohomology $H^i(\cH_{g,\mu}, \bbQ)$ vanishes. \label{mainComputation}
\end{thm}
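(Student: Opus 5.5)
The plan is to derive the vanishing from the $h$-principle promised in the abstract, which identifies the stratum with a space of sections up to a range, and then to compute the rational homology of that section space.

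\textbf{Step 1 (the $h$-principle).} I would fix a surface $\Sigma_g$ and consider a comparison map
$$\cH_{g,\mu} \longrightarrow \Gamma_{g,\mu},$$
where $\Gamma_{g,\mu}$ is a space of formal sections taken over the Borel construction of $\mathrm{Diff}(\Sigma_g)$. A formal section is a section of a bundle whose fiber records a nonvanishing complex $1$-form germ together with a holomorphic structure. Locally near a zero of order $j$, such a form is modeled on $z^j\,dz$. So the fiber at a point is a configuration-type space: a point either carries no zero, or carries a zero of order $j$ with its local model up to rescaling. I would show this map is a homology isomorphism in degrees $\le m_1(\mu)/2 - 2g/3$. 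The argument would have three parts:
\begin{itemize}
\item rewrite $\cH_{g,\mu}$, via the period map and flat surfaces, as a space of $\bbC^*$-valued translation structures with cone points;
\item run a scanning/microflexibility argument in the style of McDuff--Segal and Madsen--Weiss, in which the simple zeros play the role of the ``particles'' that get pushed to infinity;
\item use homological stability as the number of simple zeros grows (adding a simple zero pair locally) to get group completion in a range.
\end{itemize}
I expect the range $m_1/2 - 2g/3$ to come from a stability slope of $1/2$ in the number of simple zeros, reduced by the $2g/3$ cost of the Harer-type genus stability for $\cM_g$ (slope $2/3$).

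\textbf{Step 2 (computing the target).} The target fibers over $B\mathrm{Diff}(\Sigma_g)$, i.e.\ over $\cM_g$ rationally. Its fiber is a space of sections of a bundle with rationally simple fibers. The part of the fiber coming from simple zeros is rationally a sphere-type space, and the configuration of distinct zeros forces the fiberwise data to be rationally contractible in the relevant range.

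Combining this with the Madsen--Weiss computation, the rational cohomology of $\Gamma_{g,\mu}$ in the stable range should be generated by the restrictions of the $\kappa$ classes and $\psi$-type classes. One then checks that these restrictions vanish on $\cH_{g,\mu}$:
\begin{itemize}
\item $\lambda_1$ and the $\kappa_i$ are killed because the Hodge bundle has a nowhere-vanishing section $\omega$, which trivializes the tautological line;
\item on the unprojectivized stratum, the relative dualizing sheaf is trivialized up to the divisor of zeros, which kills $\kappa_1$ and then all $\kappa_i$ through the relation $\psi_{p_i}$-classes $\sim$ multiples of the tautological $\mathcal{O}(-1)$, which is trivial here.
\end{itemize}
So all positive-degree classes vanish, and the statement follows.

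\textbf{Main obstacle.} The hard step is Step 1: the stability/flexibility argument establishing the $h$-principle with the explicit range. I would try first to prove stability by a resolution (arc complex) argument, cutting along a saddle connection between two simple zeros, and to check high connectivity of that complex linearly in $m_1$.
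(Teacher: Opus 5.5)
There is a genuine gap, and it is in Step 1, which carries essentially all of the content of the theorem. What you describe there is a research program rather than an argument, and each ingredient runs into a basic obstruction.

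Zeros of a holomorphic form on a closed curve are not free particles. They are tied together by $\sum_i\mu(i)p_i\sim K_C$, and the space of forms is finite-dimensional and rigid, so there is no evident McDuff--Segal/microflexibility mechanism. In fact the comparison cannot even be made fiberwise over $(C,D)$. Over the obstructed locus, where $h^0(\Omega^1_C(-D))>g-\deg D$, holomorphic sections fail to approximate continuous ones. Any proof must show that this locus does not contribute in the range, and your sketch has no mechanism for that.

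For stability, adding two simple zeros forces $g\mapsto g+1$. Moreover, $\cH_{g,\mu}$ is not presented as a homotopy quotient of a mapping class group acting on a space you understand. So you have neither high connectivity of a saddle-connection complex nor an identification of its simplices with smaller strata. Group completion would additionally need a cobordism-category theory of translation surfaces that identifies the limit with your (only loosely specified) $\Gamma_{g,\mu}$.

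Your explanation of the range is also not the real mechanism; Harer stability plays no role. In the paper, Theorem \ref{comparison_theorem} is proved by Vassiliev's method:
\begin{enumerate}
\item The target $\cH_{g,\mu}^{\rm top}$ keeps the higher zeros as a marked divisor $D$ and formalizes only the simple zeros, as a nonvanishing continuous section of $J^1(\Omega^1(-D))$.
\item One passes to finite-dimensional approximations of this target.
\item Poincar\'e duality turns the map on $H_i$ into a Gysin map on compactly supported cohomology of discriminants.
\item Both discriminants are resolved by the poset of divisors of multiplicity $\geq 2$, truncated at degree $N=\lfloor 2g/3\rfloor$.
\end{enumerate}
Convergence (Theorem \ref{mainConvergence}) requires the codimension bound $g-3d/2+1$ on the obstructed locus. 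This is Proposition \ref{KeyGeometricBound}, which comes from Clifford's theorem and Hurwitz-space dimension counts, and it is what forces $N\approx 2g/3$. Each extra double point in the resolution costs $2$ in degree but only $1$ in codimension. Hence the range is $\lfloor(\lfloor 2g/3\rfloor-\deg\mu^{>1})/2\rfloor$ with $\deg\mu^{>1}=2g-2-m_1(\mu)$, which gives $m_1(\mu)/2-2g/3$. Stability (Theorem \ref{stabilizationtheorem}) is an output of the $h$-principle, not an input.

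Step 2 also contains errors, though these are repairable.
\begin{enumerate}
\item The section space over a fixed surface is not rationally contractible. Sections of an $S^3$-bundle over $\Sigma_g$ have rational homotopy in degrees $1,2,3$, and the cancellation only happens after the Borel construction. So Madsen--Weiss alone does not compute the target.
\item The paper instead uses the prong map from $\cH_{g,\mu}^{\rm top}$ to the classifying space of a finite wreath product of cyclic groups. Its homotopy fiber is a moduli space of surfaces with $\theta_1$-structure, $\theta_1:(\cO(1)\oplus\cO(2))-0\to\bbP^\infty$ (Proposition \ref{ProngFiberSequence}). Randall--Williams stability together with GMTW identifies the stable homology of this fiber with that of $\Omega^\infty\mathbf{MT}(\theta_1)$. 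That space is rationally contractible because $H^*((\cO(1)\oplus\cO(2))-0;\bbQ)=\bbQ[x]/(x^2)$ (Proposition \ref{ThomComputation}).
\item With that computation the vanishing is immediate and the tautological detour is unnecessary. Without it, your claim that $\kappa$- and $\psi$-type classes generate the stable cohomology of the target is unproved.
\item Your first bullet is false. A nowhere-vanishing section of the rank-$g$ Hodge bundle kills only $\lambda_g$; indeed $H^*(\cH_g;\bbQ)\cong H^*(\cM_g;\bbQ)$ in a range, so $\kappa_1$ survives on $\cH_g$. The vanishing on $\cH_{g,\mu}$ comes from $c_1(\omega_\pi)=\sum_i\mu(i)[P_i]$, hence $(\mu(i)+1)\psi_i=0$, together with Mumford's formula for the $\lambda_i$. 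That is the content of your second bullet.
\end{enumerate}
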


	\begin{remark}\label{CL}
In contemporaneous work, Theorem \ref{mainComputation} has been obtained by Chen--Larson \cite{ChenLarson}, using a different method.  (See \S \ref{subsec:relationtootherwork} for further discussion).
\end{remark}

\begin{remark}
 	Since $2g-2 = \sum_{j} j m_j(\mu)$ we may restate the bound $i \leq \mainbound$ as $i \leq \alternatemainbound$, so that once  the vanishing profile of zeros of multiplicity $>1$ is fixed the range in which Theorem \ref{mainComputation} applies grows linearly  in the number of simple zeros.   We refer to this range as the \textbf{stable range}.  \end{remark}

The fact that the stable cohomology vanishes is consistent with the philosophy that the stable cohomology should coincide with a tautological ring, and  the fact that the tautological classes on $\cM_{g,n}$ restrict to zero on $\cH_{\mu}$ \cite{chen2019tautological}.  As a direct consequence, we obtain the homology of the projectivized stratum.    

\begin{cor} In the stable range, the cohomology of $H^*(\bbP \cH_{g,\mu}, \bbQ)$ agrees with a polynomial ring on $\eta = c_1(\cO(1))$.
\end{cor}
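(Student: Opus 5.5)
The plan is to realize $\bbP \cH_{g,\mu}$ as the quotient of $\cH_{g,\mu}$ by the scaling action of $\bbC^*$ on $\omega$, and then run a Leray--Serre (or Gysin) spectral sequence. Scaling $\omega$ by $\lambda \in \bbC^*$ preserves the divisor, so $\bbC^*$ acts on $\cH_{g,\mu}$. The stabilizers are finite: an element $\lambda \neq 1$ fixing $(C,\omega)$ must come from an automorphism $\phi$ of $C$ with $\phi^*\omega = \lambda\omega$, and $\mathrm{Aut}(C)$ is finite for $g\ge 2$. The quotient $\bbP\cH_{g,\mu}$ is the projectivized stratum.

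Since we work with rational coefficients and as orbifolds/stacks, the map $\cH_{g,\mu} \to \bbP \cH_{g,\mu}$ is rationally a principal $\bbC^*$-bundle. Concretely, $\cH_{g,\mu}$ is the complement of the zero section in the tautological line bundle $\cO(-1)$ over $\bbP\cH_{g,\mu}$. Rationally, finite stabilizers do not affect cohomology, so we get a Gysin long exact sequence
$$\cdots \to H^{i-2}(\bbP\cH_{g,\mu},\bbQ) \xrightarrow{\cdot \eta} H^i(\bbP\cH_{g,\mu},\bbQ) \to H^i(\cH_{g,\mu},\bbQ) \to H^{i-1}(\bbP\cH_{g,\mu},\bbQ) \xrightarrow{\cdot\eta} \cdots,$$
where the Euler class of $\cO(-1)$ is $-\eta$ (a unit multiple of $\eta$). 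I will check the sign and orientation conventions, but they are irrelevant for the conclusion.

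Now apply Theorem \ref{mainComputation}: $H^i(\cH_{g,\mu},\bbQ)=0$ for $0<i\le N$, where $N = \mainbound$. Argue by induction on $i$ to show that
\begin{itemize}
\item multiplication by $\eta$, $H^{i-2}(\bbP\cH_{g,\mu}) \to H^{i}(\bbP\cH_{g,\mu})$, is an isomorphism for $2\le i\le N$, and surjective for $i = N+1$;
\item $H^1(\bbP\cH_{g,\mu}) = 0$;
\item $H^0(\bbP\cH_{g,\mu}) = \bbQ$ on each component.
\end{itemize}
For $H^0$, the number of components of the stratum and of its projectivization agree because $\bbC^*$ is connected. For $H^1$, use $H^1(\cH)=0$ together with injectivity of $H^{-1}=0\to H^1(\bbP\cH)$. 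The exactness pattern then shows that $\eta^k$ generates $H^{2k}$ and the odd-degree groups vanish in the range $\le N$.

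In degree $2k\le N$, $H^{2k}$ is spanned by $\eta^k$ times $H^0$. Nonvanishing of $\eta^k$ also follows from exactness: the kernel of $\cdot\eta$ on $H^{2k-2}$ is the image of $H^{2k-1}(\cH)=0$, so $\cdot\eta$ is injective. Hence the cohomology agrees with $\bbQ[\eta]$ (tensored with $H^0$ if the stratum is disconnected; I interpret the statement componentwise, or note that in the stable range with many simple zeros the stratum is connected by Kontsevich--Zorich, since nonconnected strata require all zeros of even order or hyperelliptic special strata) through degree $N$.

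The main obstacle is making the Gysin sequence rigorous for the orbifold quotient. I would handle this by passing to a finite level-structure cover, where $\cH$ is a genuine $\bbC^*$-bundle, and taking invariants, which is exact over $\bbQ$. Equivalently, one can use $H^*_{\bbC^*}(\cH_{g,\mu};\bbQ)\cong H^*(\bbP\cH_{g,\mu};\bbQ)$ and the Serre spectral sequence of $\cH\to \cH\times_{\bbC^*} E\bbC^* \to B\bbC^*$.
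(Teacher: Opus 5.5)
Your argument is correct and is the deduction the paper leaves implicit when it calls this a direct consequence of Theorem \ref{mainComputation}: the Gysin sequence for the $\bbC^*$-bundle $\cH_{g,\mu} \to \bbP\cH_{g,\mu}$ together with the vanishing of $H^i(\cH_{g,\mu},\bbQ)$ for $0 < i \leq \mainbound$. One small slip: at $i = N+1$ (your $N$) the sequence shows that $\cdot\eta\colon H^{N-1} \to H^{N+1}$ is injective, not surjective, but you do not need this for the conclusion through degree $N$.
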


 Theorem \ref{mainComputation}  implies that the Borel--Moore homology spectral sequence for the stratification of $\cH_g$ by strata degenerates in the stable range, and so we obtain. 

\begin{cor} In the stable range, the fundamental classes $[\overline{\cH_{g,\mu}}] \in H^{\sum_{j} (\mu(j)-1)}(\cH_g)$ form a basis for $H^*(\cH_g, \bbQ) = H^*(\cM_g, \bbQ)$.
\end{cor}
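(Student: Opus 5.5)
The plan is to deduce this corollary from Theorem \ref{mainComputation} by running the Gysin/Borel--Moore spectral sequence of the stratification of $\cH_g$ (with the zero section removed) by the strata $\cH_{g,\mu}$, as $\mu$ ranges over partitions of $2g-2$. Each stratum $\cH_{g,\mu}$ is a smooth orbifold of complex dimension $2g-1+\ell(\mu)$, where $\ell(\mu)$ is the number of parts. Its complex codimension in $\cH_g$ is therefore $\sum_j(\mu(j)-1)=2g-2-\ell(\mu)$. The open stratum is $\mu=(1^{2g-2})$. Filtering by closed unions of strata gives a spectral sequence
\[ E_1^{p,q}=\bigoplus_{\operatorname{codim}_{\bbC}\cH_{g,\mu}=p} H^{q-2p}(\cH_{g,\mu},\bbQ)\Rightarrow H^{q}(\cH_g-0,\bbQ), \]
using Thom isomorphisms for the rational orbifold normal bundles. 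Since $\cH_g$ is a vector bundle over $\cM_g$, and removing the zero section only changes things through the Euler class of the Hodge bundle, I would either work with $\cH_g$ together with the zero section as one more stratum, or note that $H^*(\cH_g)=H^*(\cM_g)$ and handle the zero section separately. I expect the latter is what the statement intends, reading $\cH_g$ as including the zero section.

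The first step is to check that Theorem \ref{mainComputation} applies to every stratum contributing to a fixed total degree $q$ in the stable range. A stratum of codimension $p$ contributes in degree $q$ only through $H^{q-2p}(\cH_{g,\mu})$. Its bound $m_1(\mu)/2-2g/3$ shrinks as $\mu$ coarsens, but it only needs to exceed $q-2p$, and $p$ grows as the stratum gets smaller. I would verify that if $q$ lies below the bound for the ambient/open stratum, suitably interpreted, then $q-2p\le m_1(\mu)/2-2g/3$ for every $\mu$ with $H^{q-2p}(\cH_{g,\mu})$ potentially nonzero. Merging two simple zeros into a double zero costs $1$ in $m_1/2$ and gains $2$ in $2p$, so I expect this inequality to propagate. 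This bookkeeping is where the precise meaning of ``stable range'' enters, and I expect it to be the main obstacle: the ranges must be matched, presumably taking the stable range to be that of the smallest stratum involved in degree $q$.

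Granting this, in the relevant range $E_1^{p,q}$ is nonzero only when $q=2p$, where it equals $\bigoplus_{\operatorname{codim}\mu=p}\bbQ$, given by $H^0$ of each stratum. Here I use that $H^0$ of each connected component is $\bbQ$. The stable range should force connectedness, because with many simple zeros the Kontsevich--Zorich hyperelliptic and spin components only occur for $\mu$ with all parts even, hence $m_1=0$. Since everything is concentrated in even total degree $q=2p$, all differentials vanish and the sequence degenerates. The $E_\infty$ classes are exactly the images of the fundamental classes $[\overline{\cH_{g,\mu}}]$, since the Thom class of a stratum pushes forward to the class of its closure.

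It follows that these classes span $H^*(\cH_g,\bbQ)$ in the range and are linearly independent there. Finally, $H^*(\cH_g)=H^*(\cM_g)$ because $\cH_g\to\cM_g$ is a vector bundle. The zero section is of codimension $g$ and contributes only in degrees $\ge 2g$, so it lies outside the stable range; this needs checking against the bound, using $m_1\le 2g-2$.
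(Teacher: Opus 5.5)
Your proposal is correct and follows the paper's own argument, which the paper gives in one line: Theorem \ref{mainComputation} kills the positive-degree $E_1$ terms of the Borel--Moore (equivalently Gysin) spectral sequence for the stratification of $\cH_g$ by the strata $\cH_{g,\mu}$, with the zero section in codimension $g$ lying outside the range, so the sequence degenerates and $E_\infty$ is spanned by the classes $[\overline{\cH_{g,\mu}}]$. Your propagation estimate $m_1(\mu)/2 \geq g-1-p$ for a stratum of codimension $p$ shows that the binding constraint is the open stratum $\mu = 1^{2g-2}$, which gives the range $q \leq g/3-1$; so the ``stable range'' should be read as that of the largest stratum, not the smallest.
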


Adapting an argument of Landesman--Levy \cite{LandLev}, we also have that the rational Picard group of $\cH_{g, \mu}$ vanishes in the stable range.  

\begin{cor}
	For $m_1(\mu) \geq 4 g/3 + 6$ we have $\Pic(\cH_{g,\mu}) \otimes \bbQ = 0$.
\end{cor}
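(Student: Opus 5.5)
We deduce the vanishing of $\Pic(\cH_{g,\mu})\otimes\bbQ$ from Theorem \ref{mainComputation}, following the strategy of Landesman--Levy \cite{LandLev}. The key link is the first Chern class map $c_1 \colon \Pic(\cH_{g,\mu})\otimes \bbQ \to H^2(\cH_{g,\mu},\bbQ)$. The target vanishes once $2 \leq \mainbound$. Under the hypothesis $m_1(\mu)\geq 4g/3+6$ we have $m_1(\mu)/2 - 2g/3 \geq 3$, so Theorem \ref{mainComputation} gives $H^1 = H^2 = 0$ rationally, and even $H^3=0$. It therefore suffices to show that $c_1$ is injective rationally. The extra slack in the hypothesis beyond what $H^2=0$ requires is presumably needed for this injectivity step, or to pass to a compactification or partial compactification.

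To prove injectivity, we work with the smooth Deligne--Mumford stack $\cH_{g,\mu}$ and use the exponential sequence in the analytic category together with GAGA-type comparisons. A line bundle with $c_1=0$ rationally has, after passing to a power, a class in $H^1(\cH_{g,\mu}, \cO^{an})/H^1(\cH_{g,\mu},\bbZ)$. The algebraic part of this is controlled by a Hodge-theoretic argument. Choose a smooth normal crossings compactification $\overline{X}$ of (a finite cover of) the stratum. A line bundle $L$ with $c_1(L)=0$ in $H^2(X,\bbQ)$ extends to $\overline{X}$ with $c_1$ supported on the boundary, up to torsion and boundary divisor classes. Since $H^1(X,\bbQ)=0$, the weight-one part $W_1H^1 = H^1(\overline X)$ vanishes, so $\Pic^0(\overline{X})$ is trivial up to isogeny. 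Hence $L$ is, rationally, a combination of boundary divisors restricted to $X$, which is trivial. Equivalently, in the Landesman--Levy formulation, $\Pic(X)_\bbQ \hookrightarrow H^2(X,\bbQ)$ whenever $H^1(X,\bbQ)=0$, because the kernel is a quotient of $\Pic^0(\overline X)_\bbQ$. This is where the vanishing of $H^1$ from Theorem \ref{mainComputation} enters.

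The main obstacle is carrying out this argument on the stack $\cH_{g,\mu}$ rather than on a variety. We would pass to a finite étale cover by a smooth quasi-projective variety, for instance by adding level structure to get a fine cover $X \to \cH_{g,\mu}$. Rational Picard groups and rational cohomology of the stack are then the invariants of those of $X$ under the finite group. Two points need care here. First, the vanishing of $H^1(X,\bbQ)^G$ is what is actually available, so the argument has to be run $G$-equivariantly: take the invariant part of the isogeny-decomposition of $\Pic^0(\overline X)$, whose tangent space is $H^1(\overline{X},\cO)$ and whose invariant piece is controlled by $H^1(\overline X,\bbQ)^G \subset H^1(X,\bbQ)^G = 0$. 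Second, one must check that the injectivity statement in \cite{LandLev} only uses $H^1$ and $H^2$ in the required degrees. Combining these steps, $\Pic(\cH_{g,\mu})\otimes\bbQ \hookrightarrow H^2(\cH_{g,\mu},\bbQ)=0$.
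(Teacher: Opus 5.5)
Your proposal is correct and is essentially the paper's argument. The paper adapts Landesman--Levy by comparing the Gysin sequences of $U=\cH_{g,\mu}$ and of a smooth proper compactification, uses $H^1=0$ to make $\Pic$ of the compactification inject into $H^2$, and proves the stronger integral statement $\Pic(U)\cong H^2(U,\bbZ)$, which is torsion once $H^2(U,\bbQ)=0$; you prove only rational injectivity of $c_1$, which is all the corollary needs. Your reduction to a level cover, with the $G$-invariant analysis of $\Pic^0(\overline X)$, is a sound way to handle the stackiness that the paper sidesteps by assuming a smooth proper compactification of the stack itself, and the extra slack in the hypothesis plays no role in either argument, since both use only $H^1=H^2=0$ rationally.
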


 \subsection*{The stable integral cohomology}   In contrast to the rational (co)homology, the stable integral (co)homology is highly nontrivial.   We reduce the computation of the stable integral cohomology $H_*(\cH_{g,\mu},\bbZ)$ to a homotopical problem: computing the homology of the homotopy-quotient of a certain infinite loop space by a product of cyclic groups.   For example we have the following consequence.

\begin{thm} \label{modPExample} Let $p$ be a prime $> 2$.  In the stable range, the homology of $\cH_{g, 1^{2g-2}}$ with $\bbF_p$ coefficients is isomorphic to $$H_*((\Omega^2 (\Omega^\infty S^\infty), \bbF_p) \otimes H_*((\Omega^\infty S^\infty)_0, \bbF_p),$$ where $\Omega^n$ denotes the $n$th loop space functor and $\Omega^\infty S^\infty = \colim_{n} \Omega^n S^n$.    
The same holds with $1^{2g-2}$ replaced by any partition  $\mu$ such that $m_i(\mu) \leq 1$ for all $i > 0$ and $m_i(\mu)$ is only nonzero when $i+1$ is relatively prime to $p$.  
\end{thm}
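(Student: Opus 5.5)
The proof will go through the $h$-principle described in the abstract, which is the main technical result of the paper. It identifies $\cH_{g,\mu}$, in the stable range, with a homotopy quotient: an infinite loop space of sections built from local models of differentials with prescribed zeros, divided by the symmetry groups of the zeros. Concretely, I expect the $h$-principle to give a homology isomorphism in the stable range
$$\cH_{g,\mu} \longrightarrow \Big(\Omega^\infty_0 \mathbf{T}_\mu\Big)_{h \prod_j (\Sigma_{m_j(\mu)} \wr \bbZ/(j+1))},$$
where $\bbZ/(j+1)$ is the group of rotations of the local model $z^j\,dz$ at a zero of order $j$.

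The cyclic groups enter as follows. A zero of order $j$ has an $(j+1)$-prong structure, so the framing data at that point is acted on by $\bbZ/(j+1)$.

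The spectrum $\mathbf{T}_\mu$ should be described as a Thom-type spectrum, or a spectrum of configurations of points in the plane carrying flat structures. The strategy is:

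1. Use that $\omega$ trivializes the tangent bundle away from the zeros. The space of differentials then looks locally like framed surfaces with marked singular points.
2. Conclude that the simple zeros contribute a factor of $\Omega^\infty S^\infty$, coming from framed surfaces together with the Pontryagin--Thom construction.
3. Conclude that each zero of higher order contributes a copy of a suitable free loop space or $\Omega^2$ factor, coming from configurations in the plane.

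Granting this, the argument for the specific $\mu$ is as follows. First, for $\mu = 1^{2g-2}$ all zeros are simple, and the answer should split as a product. One factor is $\Omega^2(\Omega^\infty S^\infty)$, which records the local framing monodromy data. The other is $(\Omega^\infty S^\infty)_0$, the Madsen--Weiss-type factor for framed surfaces, whose stable homology is computed by Galatius--Randal-Williams.

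Second, pass to the quotient by the finite group $\prod_j \Sigma_{m_j} \wr \bbZ/(j+1)$. Suppose $m_j \le 1$ and $p \nmid (j+1)$. Then for $j>0$ the group acting is just a product of cyclic groups $\bbZ/(j+1)$ of order prime to $p$. The homotopy orbit spectral sequence $H_s(G; H_t(X;\bbF_p)) \Rightarrow H_{s+t}(X_{hG};\bbF_p)$ therefore collapses to the coinvariants $H_*(X;\bbF_p)_G$.

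Third, show that $G$ acts trivially on $H_*(X;\bbF_p)$. The action of $\bbZ/(j+1)$ comes from rotating the local chart, so it extends to an action of the connected group $S^1$ (rotation of the framing). A group action that factors through a connected group acts trivially on homology, so the coinvariants are all of $H_*(X;\bbF_p)$. The simple zeros in the case $1^{2g-2}$ need separate treatment. There the symmetric group $\Sigma_{2g-2}$ does appear, but it should already be absorbed into the infinite loop space description: the configuration of simple zeros is part of the labeled configuration, and its group completion produces the $\Omega^\infty S^\infty$ factors. The prime $p=2$ is excluded because the $\bbZ/2$ rotations of simple zeros then contribute nontrivially.

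The main obstacle is pinning down the spectrum in the $h$-principle and verifying the product splitting with the two stated factors. In particular, one must show that the contribution of the framing/prong data is exactly $\Omega^2(\Omega^\infty S^\infty)$ with trivial twisting mod $p$. I would first try to establish the splitting by producing a fibration
$$(\Omega^\infty S^\infty)_0 \to \cdot \to \Omega^2\Omega^\infty S^\infty$$
of infinite loop spaces, and then use that, away from $2$, the relevant extension is split: the $k$-invariants involve the $2$-torsion of the Hopf map $\eta$ and hence vanish after inverting $2$. The Künneth theorem over $\bbF_p$ then gives the stated tensor product.
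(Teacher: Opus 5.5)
There is a genuine gap, and it sits exactly at what you call the ``main obstacle'': you never identify the target of the $h$-principle, and your guess for it is not the right model.

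In the paper, $\cH^{\rm top}_{g,\mu}$ does not mark simple zeros at all. It records only the divisor $D$ of zeros of order $>1$, together with a \emph{nonvanishing} continuous section of $J^1(\Omega^1_C(-D))$. A simple zero is then just a point where $s_0$ vanishes but $s_1$ does not. So there is no quotient by $\bS_{2g-2}\wr\bbZ/2$ to be ``absorbed''. If there were, its order would be divisible by $p$ and your coinvariant argument would fail.

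What one actually gets is a fibration $\cM^{\theta_1}(\Sigma_{g,b},\ell)\to\cH^{\rm top}_{g,\mu}\to B(\bS\wr\bC_{(\mu^{>1})^{+1}})$ (Proposition \ref{ProngFiberSequence}). Here $\theta_1\colon (O(1)\oplus O(2))-0\to\bbP^\infty$ encodes a nonvanishing section of $J^1(\Omega^1)\cong\Omega^1\oplus(\Omega^1)^{\otimes 2}$. For $\mu=1^{2g-2}$ the base is a point. Checking that $\theta_1$ is $1$-trivial (Proposition \ref{stabilizationPi0}) lets Randal-Williams' stability theorem and GMTW identify the stable homology of the fiber with that of a component of $\Omega^\infty\bM\bT(\theta_1)$.

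The two tensor factors then come from computing this particular Thom spectrum, which your proposal does not do. The Gysin sequence of the $S^3$-bundle $(O(1)\oplus O(2))-0\to\bbP^\infty$ (Euler class $\pm 2x^2$) shows that $H_*(\bM\bT(\theta_1);\bbZ)$ is $\bbZ$ in degrees $-2$ and $0$ and $2$-torsion elsewhere. After inverting $2$ one therefore has a cofiber sequence $S^{-2}\to\bM\bT(\theta_1)\to S^0$. Its connecting map lies in $\pi_1^s=\bbZ/2$, so it vanishes, giving $\Omega^\infty\bM\bT(\theta_1)[1/2]\simeq\Omega^\infty S^{-2}\times\Omega^\infty S^0$ (Proposition \ref{ThomComputation}).

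Your last paragraph gestures at this, but with the cells reversed. The bottom cell $S^{-2}$ is the framed-surface part, and it gives $\Omega^2 QS^0$, not $Q_0S^0$. The correct fibration is therefore $\Omega^2QS^0\to\Omega^\infty\bM\bT(\theta_1)\to QS^0$. In your direction the extension would be classified by $\pi_{-3}^s=0$, and $\eta$ would never enter. Likewise, $p=2$ is excluded because of this $2$-torsion and $\eta$, not because of ``$\bbZ/2$ rotations of simple zeros'', which do not appear in the model.

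Your handling of the higher-order zeros does agree with the paper: the base is a classifying space of a prime-to-$p$ product of cyclic groups, and the monodromy is trivial because it factors through the connected group of framings.
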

Here the second tensor factor $H_*((\Omega^\infty S^\infty)_0, \bbF_p)$  is the stable homology of the symmetric group, see \cite[\S I.5]{May} for a full computation.  

 \begin{remark}
	Throughout, when we take (co)homology with integer coefficients, we consider $\cH_{g,\mu}$ as a stack.  Concretely, because the moduli space of curves can be presented as a global quotient stack, we may pass to a $G$-cover $X \to \cH_{g,\mu}$ for which $X$ is a scheme, and then we have $H^i(\cH_{g,\mu},\bbZ)  \iso  \bbH^i(G, C^*(X,\bbZ))$ and $H_i(\cH_{g,\mu},\bbZ)  \iso  \bbH_i(G, C_*(X,\bbZ))$, where $\bbH_i$ and $\bbH^i$ respectively denote hyperhomology and hypercohomology with coefficients in the chains and cochains of the cover.
\end{remark}

\subsection*{An $h$-principle for strata of differentials} We establish the above results through an $h$-principle for the homology of strata of differentials in  a range that increases as the number of simple zeros increases.  Specifically, we introduce the following topological moduli space that functions as an approximation to the stratum of abelian differentials.

\begin{defn}
	Let $\mu: [n] \to \bbN_{> 0}$ be an integer partition of $2g-2$.   We define $\cH_{g,\mu}^{\rm top}$ to be the topological stack parameterizing the data of:

\begin{enumerate} \item $C \in \cM_g$  a genus $g$ curve, \\
				\item  $D = \underset{i, \mu(i) > 1} \sum \mu(j) p_j$ a divisor in $\Hilb^{2g-2 - m_1(\mu) }  (C)$, \\
				
				\item  and $s \in \Gamma^{\rm top}(C, J^1 (\Omega^1_{C}(-D)))$ a nonvanishing \textbf{continuous} section of the first jet bundle of $\Omega^1_C(-D)$, such that $s_0(p) \neq 0$ for all $p$ in the support of $D$, where $s_0$ denotes the $0$th part of $s$.  
\end{enumerate}
  $\cH_{g,\mu}^{\rm top}$ is a fiber bundle over the stratum of the symmetric power of the universal genus $g$ curve that parameterizes choices of $C,D$ satisfying the above conditions.  (This stratum is $\cM_{g,n-m_1(\mu)}/\bS_\mu$, where $\bS_{\mu}$ is the Young subgroup associated to $\mu$).    
\end{defn}
 
 
 There is a canonical map $$\cH_{g,\mu} \to \cH_{g,\mu}^{\rm top}, \qquad (C, \omega) \mapsto (C, \div(\omega)^{> 1} , j^{1}(\omega)),$$ given by taking an abelian differential $\omega$   to the associated divisor $\div(\omega)^{>1}$ consisting of zeros of $\omega$ of order $> 1$  and the first jet of $\omega$ considered as a section of $\Omega^1(-D^{>1})$.    Our $h$-principle states:

\begin{thm}\label{comparison_theorem}
	 $H_i(\cH_{g,\mu}, \bbZ) \to H_i(\cH_{g,\mu}^{\rm top}, \bbZ)$ is an isomorphism for all $i \leq \mainbound.$
\end{thm}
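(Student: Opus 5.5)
The plan is to reduce to a fiberwise statement over the base $B := \cM_{g,n-m_1(\mu)}/\bS_\mu$ parameterizing $(C,D)$. Both $\cH_{g,\mu}$ and $\cH_{g,\mu}^{\rm top}$ map compatibly to $B$. The fibers are:
\begin{itemize}
\item over $\cH_{g,\mu}$, the space of holomorphic sections $\omega \in \Gamma(C, \Omega^1_C(-D))$ whose zeros are all simple, i.e.\ the complement in the $(g - \deg D)$-dimensional vector space $V = H^0(\Omega^1_C(-D))$ of the discriminant of sections with a non-reduced zero;
\item over $\cH_{g,\mu}^{\rm top}$, the space of continuous nonvanishing sections of the jet bundle $J^1(L)$, $L := \Omega^1_C(-D)$, satisfying the open condition at $\mathrm{supp}\, D$.
\end{itemize}
Comparing the Serre (Leray) spectral sequences, it suffices to show that the fiberwise jet map $V \setminus \Disc \to \Gamma^{\rm top}(C, J^1 L \setminus 0)$ is a homology isomorphism in degrees $\le \mainbound$ uniformly, with local systems compatible. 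Since both sides are fiber bundles over $B$ (after passing to a finite cover to make things schemes as in the remark), fiberwise homology equivalence in a range gives the result by Zeeman comparison.

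For the fiberwise statement I would follow Vakil--Wood / Aumonier style ``algebraic h-principle'' arguments: Vassiliev's method of simplicial resolution of the discriminant. Resolve $\Disc \subset V$ by the configuration spaces of points $p_1,\dots,p_k$ at which $j^1\omega(p_i)=0$. The locus of $\omega$ vanishing to second order at $k$ prescribed points is a linear subspace of codimension $2k$ as long as the evaluation map $V \to \bigoplus_i J^1_{p_i} L$ is surjective. By Riemann--Roch this holds when $2k \le \deg L - 2g + 1$ or so, i.e.\ when $L(-2\sum p_i)$ is nonspecial. Here $\deg L = 2g-2-\deg D = m_1(\mu)$, but $L$ itself is special (it is $K(-D)$), so the naive bound fails.

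I expect this to be the main obstacle. I would handle it using Clifford/gonality-type estimates: interpolation fails for $k$ points only in a controlled way, and the locus where it fails has codimension large enough, bounded using $h^0(K(-D-2\sum p_i))$ and Clifford's theorem. This gives surjectivity up to about $k \lesssim m_1(\mu)/4 - g/3$, i.e.\ a homological range of $2k$, which should match $\mainbound$. The same truncated resolution, applied to the continuous section space, computes $\Gamma^{\rm top}$ via a Vassiliev spectral sequence: the terms indexed by $k$ are Thom spaces of the same jet bundles over configuration spaces. So the two spectral sequences agree on the $E_1$ page in the range where codimensions are honest, and they diverge only in degrees above $\mainbound$.

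Finally, I would check the boundary condition at $\mathrm{supp}\, D$: the condition $s_0(p)\neq 0$ corresponds to the zero of $\omega$ at $p_j$ having order exactly $\mu(j)$. This is incorporated by including points of $\mathrm{supp}\, D$ into the resolution, where they impose one condition rather than two, with the same surjectivity estimate. I would also check naturality in families to conclude over $B$.
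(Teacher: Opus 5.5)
The gap is your first step, the reduction to a fiberwise statement over $B$. That fiberwise statement is false, so comparing Serre/Leray spectral sequences over $B$ cannot work. In addition, $\cH_{g,\mu}\to B$ is not a fiber bundle: $h^0(\Omega^1_C(-D)) = g-\deg D + h^0(\cO_C(D)) - 1$ jumps wherever $D$ moves in a pencil, whereas you took $\dim V = g-\deg D$.

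Here is an explicit failure. Take $\mu = 1^{2g-2}$, $D=0$, and $C$ hyperelliptic, with $\pi\colon C\to\bbP^1$ branched over $b_1,\dots,b_{2g+2}$.
\begin{itemize}
\item Every holomorphic differential is the pullback of a binary form $s$ of degree $g-1$. It has only simple zeros iff $s$ is squarefree with $s(b_j)\neq 0$ for all $j$.
\item So the fiber is the complement in $\bbC^g$ of the discriminant and $2g+2$ hyperplanes. Hence $H_1\cong\bbZ^{2g+3}$, one generator per irreducible component.
\item By contrast, $\Gamma^{\rm top}(C, J^1K_C-0)$ is a space of sections of a bundle with fiber $\bbC^2-0\simeq S^3$ over a surface. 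Its $\pi_1$ is cyclic, a quotient of $H^2(C;\pi_3 S^3)=\bbZ$.
\end{itemize}
The fiberwise map therefore fails already on $H_1$, although $1\leq \mainbound = g/3-1$ once $g\geq 6$. This is why the paper stresses that one must show the obstructed fibers do not \emph{contribute}, not that they are well behaved.

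You anticipate the special-divisor problem, but your remedy is applied at the wrong level. On a fixed special curve the bad configurations do not have high codimension. For example, on a hyperelliptic curve take $D'=2p+2\iota p$, where Clifford is sharp: double vanishing there imposes $2$ rather than $4$ conditions, along a $1$-parameter family. So no fiberwise estimate rescues that fiber.

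The codimension is large only in the total space over moduli. Pairs $(C,D)$ with $\deg D=d$ and $h^0(\cO_C(D))\geq 2$ form a locus of dimension $\leq 2d+2g-4$; this is a Hurwitz-space count, i.e.\ curves of gonality at most $d$ have large codimension in $\cM_g$. Combined with Clifford, this gives codimension $\geq g-3d/2+1$ for sections over the obstructed locus.

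Turning such a bound into homology requires Poincar\'e duality on the total spaces, which is what the paper does:
\begin{enumerate}
\item Resolve the whole discriminant in $\uGamma(C,\Omega^1_C(-D))$, over open sets $U$ of a cover of $W_{\mu^{>1}}$, by a Bar construction on the poset of divisors $D'\geq D$ of multiplicity $\geq 2$. Truncate at $\deg D'\leq N=\lfloor 2g/3\rfloor$, where the bound above stays positive.
\item Prove a convergence criterion that only asks the codimension of $Z|_{\cN_T}$ in the total space to be at least $\alpha(T)$, obstructed part included.
\item Run the parallel resolution on finite-dimensional approximations $\cF^{(k)}$ of continuous jet sections.
\item Compare the two spectral sequences via Gysin maps.
\end{enumerate}
Localizing to small $U$ by Mayer--Vietoris is harmless, because codimension bounds survive restriction to open sets. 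What fails is shrinking $U$ to a point, which is exactly what your fiberwise reduction does.

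Your treatment of $\mathrm{supp}\,D$ (one condition, the $j\leq j+1$ strata) matches the paper. In that argument each admissible double point adds one degree, so the range is the number $\lfloor (N-\deg\mu^{>1})/2\rfloor$ of double points the truncation allows, not $2k$.
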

With Theorem \ref{comparison_theorem} in hand, it becomes possible to analyze the homology of $\cH_{g,\mu}$ using the topological methods developed in the wake of the Madsen--Weiss theorem: in particular the results of Randall--Williams on homological stability for moduli spaces of surfaces with tangential structure \cite{RW}, and  of Galatius--Madsen--Tillmann--Weiss  on the homotopy type of the cobordism category \cite{GMTW}.

\subsection*{Stabilization maps}  So far we have discussed a ``stable range" for (co)homology, without mentioning any stabilization maps between homology groups.   Indeed, analogously to the situation for $\cM_g$, we will only construct natural stabilization maps after passing to curves with a framed marked point. Let $\cH_{g,\mu}^1 \to \cH_{g,\mu}$ be the moduli stack parameterizing the data of $(C,\omega) \in \cM_g$ together with an additional framed point $q \in C$ such that $\omega(q) \neq 0$.    

Given an integer partition $\mu$ of $2g-2$, let $\mu+1 +1$ denote the integer partition of $2g$ with two additional ones.  There is a homotopy class of stabilization maps $\cH_{g,\mu}^1 \to \cH_{g+1,\mu + 1 + 1}^1$, defined  as follows.  

Fix a flat torus $T$ with marked point $p$ and framed point $p'$.  Consider  $(C, \omega, q) \in \cH_{g,\mu}^1$  as a translation surface, and use the tangent vector at $q$ to cut parallel $\epsilon$-slits in $C, T$ starting at $p,q$ respectively. Then glue the opposite boundaries of each slit together to obtain a surface $T \cup C$.  The flat structure extends to $T \cup C$, with two additional singularities at the ends of the slit, corresponding to two additional simple zeros of a holomorphic form.   We let $(T \cup C, \omega, p') \in \cH_{g,\mu + 1 + 1}^1$ to be the image of $(C, \omega, q)$ under the stabilization map.  

\begin{remark}Formally, we take the domain of the stabilization map to be the subspace  $\cH_{g,\mu}^1 \times \bbR_{> 0}$  parameterizing $(C,\omega, q)$ and $\epsilon > 0$ which is less than the injectivity radius of $C$ over $2$---this space is canonically equivalent to $\cH_{g,\mu}^1$ by forgetting $\epsilon$.  
\end{remark}
 
 Theorem \ref{comparison_theorem} generalizes to a similar $h$-principle for pointed framed mapping spaces, and using this we show.
 
\begin{thm} \label{stabilizationtheorem}
	 $H_i(\cH_{g,\mu}^1) \to H_i(\cH_{g+1,\mu+1 + 1}^1)$ is an isomorphism for all $i \leq \mainbound -1.$
\end{thm}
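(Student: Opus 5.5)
The plan is to reduce the statement to a homological stability theorem for moduli of surfaces with tangential structure, via a framed version of the $h$-principle. First I would define the framed topological model $\cH^{1,\top}_{g,\mu}$. It parameterizes $(C, D, s, q)$ as in the definition of $\cH_{g,\mu}^{\rm top}$, together with a framed point $q$ at which $s_0(q)\neq 0$. Equivalently, it is a space of sections relative to a boundary condition near $q$. Following the sentence preceding the statement, I would then prove the framed analogue of Theorem \ref{comparison_theorem}: the map $H_i(\cH^1_{g,\mu}) \to H_i(\cH^{1,\top}_{g,\mu})$ is an isomorphism for $i \le \mainbound$. The proof should be the same as for Theorem \ref{comparison_theorem}. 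One notes that $\cH^1_{g,\mu}\to \cH_{g,\mu}$ and its topological counterpart are both fibrations whose fiber is the complement of the zeros of $\omega$ (respectively of $s_0$) in $C$, framed at $q$. One then compares these fibers, or reruns the $h$-principle argument pointwise in families.

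Next I would construct a topological stabilization map $\cH^{1,\top}_{g,\mu}\to\cH^{1,\top}_{g+1,\mu+1+1}$ by gluing, along a disk around $q$, a fixed torus with a fixed continuous jet section having two simple zeros. I would check that the square
\[
\begin{array}{ccc}
\cH^1_{g,\mu} & \to & \cH^1_{g+1,\mu+1+1}\\
\downarrow && \downarrow\\
\cH^{1,\top}_{g,\mu} & \to & \cH^{1,\top}_{g+1,\mu+1+1}
\end{array}
\]
commutes up to homotopy. This should follow because the slit construction is local near $q$. Its effect on $1$-jets is homotopic, through nonvanishing jets away from the prescribed zeros, to the abstract torus gluing.

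Applying the framed comparison to both rows, the vertical maps are isomorphisms in degrees $\le \mainbound$ for $(g,\mu)$. For $(g+1,\mu+1+1)$ the bound is $(m_1+2)/2 - 2(g+1)/3 = \mainbound + 1/3$, which is larger. It therefore suffices to show the bottom map is an isomorphism in degrees $\le \mainbound -1$.

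For the bottom row I would identify $\cH^{1,\top}_{g,\mu}$ as a bundle over the configuration data $(C,D)$, with fiber a space of nonvanishing continuous sections of the rank-$2$ bundle $J^1(\Omega^1(-D))$. Passing to the universal surface, this is a moduli space of surfaces with a tangential-type structure: a section of an associated bundle determined by $TC$, with labelled singular points of prescribed local type at $D$. Randall-Williams' stability theorem \cite{RW} for surfaces with tangential structure then applies to genus stabilization, provided the structure space is connected or one restricts to the appropriate components. Its range is roughly $2g/3$, which is better than what we need. The marked points of $D$ are handled by stability for configurations of labelled points, since they are unchanged by the stabilization.

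The main obstacle I expect is that stabilization also adds two simple zeros, so the structure is not a fixed tangential structure independent of $g$. The topological model has no simple-zero data recorded, because simple zeros are only the generic zeros of $s_0$. This is exactly why working with the jet bundle is convenient: the zeros of $s_0$ are not fixed, and only $s$ itself, the nonvanishing $1$-jet, is. So the bottom map should be literally a genus stabilization of a $\theta$-structure moduli space. The count $2g-2 = \sum j m_j$ only enters through which components are hit. I would verify carefully that the torus piece lies in the correct component, in particular its winding and Arf-type invariants, and that the Randall-Williams range dominates $\mainbound -1$.
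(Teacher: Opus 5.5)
Your plan is essentially the paper's proof: the pointed $h$-principle (Theorem \ref{pointedcomparison}) on both rows, a homotopy-compatible topological gluing map, and Randall--Williams (Theorem \ref{RandallWilliams}, cases (ii) then (i)) applied to the fiber $\cM^{\theta_1}(\Sigma_{g,b+1},\ell)$ of $\prong^{>1}$ on $(\cH^1_{g,\mu})^{\rm top}$ --- it is this prong fibration, not configuration-space stability, that splits off the points of $D$, and since $\cM^{\theta_1}(\Sigma,\ell)$ is connected for every boundary condition (Proposition \ref{stabilizationPi0}) there are no winding or Arf invariants to check. The one suggestion that would fail is comparing fibers of the point-forgetting maps: the zero set of a continuous $s_0$ is uncontrolled, so $(\cH^1_{g,\mu})^{\rm top}\to\cH_{g,\mu}^{\rm top}$ is not a fibration with fiber the framed complement of that zero set; the pointed $h$-principle comes instead from rerunning the Vassiliev argument with $q$ as an extra multiplicity-one point (the poset $W_{\mu^{>1}+1}/\fH_2$), which loses up to one degree and gives $i\le \mainbound-1$ rather than $\mainbound$, still enough for your square.
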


\subsection*{Higher order differentials}
	Our $h$-principle also applies to strata of higher order differentials $(\Omega^1_C)^{\otimes k}$.  In fact, the case $k = 1$ is significantly more difficult than the other cases: because $(\Omega^1_C)^{\otimes k}$ is $(k-1)(2g  - 2)$-jet ample for all curves $C \in \cM_g$, it is possible to quickly obtain the case $k > 1$ using the method of Aumonier-Das \cite{AumonierDas}.   
	

\subsection{Proof method and structure of paper}

	Our argument for Theorem \ref{comparison_theorem} follows the typical structure of applications of Vassiliev's method to $h$-principles. We construct a finite dimensional approximation to the space of continuous sections and use Poincar\'e duality to reduce to proving that a Gysin map of compactly supported cohomology groups is an isomorphism in a range.  Then, using a simplicial resolution of a discriminant locus (in this case sections of $\Omega^1$ that have zeros to higher order than prescribed), we construct spectral sequences converging to the compactly supported cohomology of both sides, such that the natural map between these spectral sequences an isomorphism in a range of degrees.   The main difficulty we face is in constructing a partial resolution of the space of algebraic sections that both converges and has computable cohomology in a stable range.

	In \S \ref{sec:poset}, we consider the poset of divisors in the universal curve of multiplicity $\geq 2$ and associated Bar constructions and spectral sequences. 
	In  \S \ref{sec:dimbound}, we state the geometric bounds that give a spectral sequence converging to the compactly supported cohomology of $\cH_{g,\mu}$ in a range.  In  \S \ref{sec:hprinciple} we prove Theorem \ref{comparison_theorem}, and in \S \ref{sec:stablecomputation} we apply it to establish the other results of this paper.

\subsection{Relation to other work} \label{subsec:relationtootherwork}
 Theorem \ref{comparison_theorem} is in the spirit of recent results of Aumonier \cite{Aumonier}, Aumonier--Das \cite{AumonierDas}, and Das--Tosteson \cite{DasTost} on comparing spaces of holomorphic maps and holomorphic sections to spaces of continuous sections.  
 
A major obstacle to proving Theorem \ref{comparison_theorem} is that it cannot be reduced to a fiberwise statement (relative to the projection to $\cM_{g,n-m_1(\mu)}/\bS_\mu$).   Indeed, there are choices of $C, D'$ such that the space of nonsingular holomorphic sections of $\Gamma(C,\Omega^1(-D'))$ fails to approximate the homology of $\Gamma^{\rm top}(C, J^1(\Omega^1(-D')))$. Any proof must establish that  the locus of ``obstructed fibers" does not contribute to the stable cohomology.  In particular, the strategy of  Aumonier--Das  \cite{AumonierDas} does not apply.
 Instead, we use a variant of Das--Tosteson's framework, working globally over $\cM_{g,n-m_1(\mu)}/\bS_\mu$.  We formulate a criterion for convergence of the resolution in terms of certain dimension bounds on the locus of bad fibers.   To obtain the range in Theorem \ref{comparison_theorem}, we use geometric input:  Clifford's theorem, and the fact codimension of the locus of $d$-gonal curves grows in $g$ (see Proposition \ref{KeyGeometricBound}).
 

As mentioned in Remark \ref{CL}, simultaneous independent results of Chen--Larson \cite{ChenLarson} also give Theorem \ref{mainComputation}.  Chen--Larson's stable range is $g/6$ larger, and their method of proof is more direct.  On the other hand, our approach via the $h$-principle (Theorem \ref{comparison_theorem}) applies to integral cohomology and allows us to exhibit stabilization maps that induce isomorphisms on homology.  Substituting a dimension bound that Chen--Larson establish into our argument improves our stable range (see Remark \ref{improvedrange}).

\subsection{Further Questions}

\subsubsection*{Optimal ranges, and high codimension strata} Even for $H_0$ many strata lie outside of our stable range.   Indeed, any stratum in our stable range is connected, but Kontsevich and Zorich \cite{KontsevichZorich} have computed the connected components of all strata and shown that there can be up to three connected components.  Still, their answer displays  uniformity as the genus goes to $\infty$.   Is there a version of homological stability, or an $h$-principle that applies outside of the range considered in this paper?

\subsubsection*{Full computation of stable integral cohomology} As mentioned above, our methods reduce the computation of the stable integral homology to a homotopical problem.  More precisely, we show that the stable integral homology agrees with the homology of a space that fibers over a wreath product of cyclic groups, with fiber equivalent to the infinite loop space of a Thom spectrum.    Can one determine the stable homology explicitly?

\subsection{Acknowledgements} The author thanks Ronno Das for their joint collaborations using the Vassiliev method that have had a significant influence on this paper.  Thanks also to  Dawei Chen and Hannah Larson for sharing their related work and for helpful comments on a draft of this paper, and to Aaron Landesman for helpful conversations.

\subsection{Notation}

\begin{itemize}
	\item Unless otherwise indicated, we use $\dim X$ to denote $\dim_\bbC X$, the \textbf{complex dimension} of $X$.  
	\item For an integer partition $\mu$, we let $\ell(\mu) = \#\{i ~|~ \mu(i) > 0\}$, $m_j(\mu) = \#\{i ~|~ \mu(i) = j\}$, $\deg(\mu) = \sum_{i} \mu(i)$ and $\mu^{>1}$ be the integer partition of $\deg(\mu) - m_1(\mu)$ such that $\mu^{>1}(i) = \mu(i)$ if $\mu(i) > 1$ and $\mu^{>1}(i) = 0$ otherwise.  
	\item $\Gamma^{\top}(C,E)$ denotes the space of continuous sections of a map $p:E \to C$.  When $p, C,E$ are holomorphic, we let $\Gamma(C, E)$ be the space of holomorphic sections.   When $C,E$ are extended to families $\cC \to B, p:\cE \to \cC$ over a base space $B$,  we let $\uGamma^{\rm top}(C, E) \to B$ (resp. $\uGamma(C, E) \to B$) denote the space of relative continuous (resp.) holomorphic sections along $p$.  
	\item $\cC_g \to \cM_g$ denotes the universal curve and $\fH_2(\cC_g)$ denotes the Hilbert scheme of divisors of multiplicity $\geq 2$. 
	\item For an integer partition $\mu$, $W_\mu$ denotes the stratum of the Hilbert scheme of $\cC_g$ consisting of divisors of the form $\sum_{i} \mu(i) p_i$ for distinct $p_i \in C$.   
	\item $W_\mu/\fH_2(\cC_g)$ denotes the poset of pairs $(C, D) \in W_\mu$ and $(C,D') \in \fH_2(\cC_g)$ such that $D \subseteq D'$. And  $W_\mu/^\circ \fH_2(\cC_g)$ is the subposet where $D \subsetneq D'$.
\end{itemize}

\section{Posets of divisors of multiplicity $\geq 2$} \label{sec:poset}

There are several possible choices for which algebraic object to use to build the resolution of the discriminant locus (various topological posets, or topological categories).   We choose to adopt certain subposets of the Hilbert schemes of curves consisting of divisors whose multiplicity at every point is $\geq 2$.   In this section, we define the relevant posets, their combinatorial stratifications,  the associated spectral sequence for simplicial resolutions, and a convergence criterion for simplicial resolutions.   This material is closely analogous to \S3, \S4, \S5  of Das--Tosteson,  with slightly different (in fact simpler) combinatorics. We also strengthen the key convergence criterion, Theorem \ref{mainConvergence}, allowing for certain fibers to be obstructed.

\subsection{Definitions and combinatorics} \label{subsec:combinatorics}

For an integer partition $\mu$, the poset that we use to analyze $\cH_{g,\mu}$ will live over a stratum of the Hilbert scheme of the universal curve $\cC_g \to \cM_g$ defined as follows.

\begin{defn} 
	Let $W_\mu \subseteq \Hilb(\cC_g)$ be the locally closed substack parameterizing the data of 	\begin{itemize}
		\item a genus $g$ curve $C$ 
		\item a divisor $D \subseteq C$ of the form $D = \sum_{j} \mu(i) p_i$ for some disjoint points in $p_i \in C$.  
	\end{itemize}
\end{defn}

\begin{defn}
	For a curve $C$, we let $\fH_2(C) \subseteq \Hilb(C)$ be the closed subset of the Hilbert scheme parameterizing subschemes of length $\geq 2$ at every point of their support.  We let $\fH_2(\cC_g) \subseteq \Hilb(\cC_g)$ be corresponding closed substack of the Hilbert scheme of the universal curve $\cC_g \to \cM_g$.  
	We let $$W_\mu / \fH_2(\cC_g) := \{(C,D) \in W_\mu, (C, D') \in \fH_2(\cC_g) ~|~ D' \leq D \},$$  where we write $\leq$ for the containment relation on subschemes.   We let $W_\mu /^\circ \fH_2(\cC_g)$ denote  subposet where the inequality is strict.  For notational simplicity, we will sometimes write $\fH_2 = \fH_2(\cC_g)$ when $g$ is implicit.   
\end{defn}

\begin{remark}\label{notationWarning}
	In the notation of Das--Tosteson \cite{DasTost} $W_\mu / \fH_2(\cC_g)$ is denoted by $W_\mu  \leq \fH_2(\cC_g)$.  The subposet  $W_\mu /^\circ \fH_2(\cC_g)$  was denoted by $W_\mu  < \fH_2(\cC_g)$ in Das--Tosteson.
\end{remark}

The morphism of stacks $W_\mu / \fH_2(\cC_g) \to W_\mu$ is representable, and for any map from a scheme $Y \to W_\mu$ the pullback to $Y$ carries the structure of a poscheme over $Y$ (with relation induced by containment).  In our applications of this poset, we will pass to an arbitrary open subset  $U$ (in the classical topology) of a finite \'etale cover of $W_\mu$ by a scheme.  Then  $U/\fH_2(\cC_g) := U \times_{W_{\mu}} (W_\mu /\fH_2(\cC_g))$ is a topological space, naturally carrying the structure of a poset via the closed relation: 
\[\text{$(u \leq x)  \leq (u' \leq x')$ if and only if $u = u'$ and $x \leq x'$  }.\] ($U/\fH_2(\cC_g)$ is an open subset of a poscheme).    
A reader who wishes to avoid stacky issues may always work with such a $U$.  
For $u \in U$, we write $(C_u, D_u) \in W_\mu$ for the corresponding pair of a curve and  divisor. 

We  use the following combinatorial decomposition of the less than or equal to relation inside of $\Hilb(\cC_g) \times \Hilb(\cC_g)$.  

\begin{defn}
	Given a multiset of pairs of integers  $T = \{n_1 \leq m_1, \dots, n_k \leq m_k\}$, we let $\cN_T$ denote the substack of $\Hilb(\cC_g) \times_{\cM_g} \Hilb(\cC_g)$ consisting of a curve $C \in \cM_g$ and  pairs of divisors $(D,E) \in \Hilb(C) \times \Hilb(C)$ of the form $(\sum_{i} n_i p_i , \sum_{i} m_i p_i)$ for some choice of distinct $p_1, \dots, p_k$.  
\end{defn}

By definition, we have that $W_\mu / \fH_2 = \bigcup_{T} \cN_T$ where $T$ ranges over all multisets satisfying the following combinatorial conditions:
\begin{enumerate}
	\item if $j$ appears $r$ times in $\mu$, then there are exactly $r$ elements in $T$ of the form $j \leq k_i$ for some $k_1, \dots, k_r \in \bbN$,
	\item  every other element of $T$ is of the form $0 \leq m$ for some $m \geq 2$.
\end{enumerate}
Each $\cN_T$ is a locally closed substack of the Hilbert scheme:  $\cN_{\{n_i < m_i\}_{i = 1}^k}$ is in the closure of $\cN_{\{n_i' < m_i'\}_{i = 1}^j}$ if and only if there is a surjective function $f: [j] \to [k]$ such that $n_i= \sum_{ l \in f\inv(i)} n_{l}'$ and $m_i= \sum_{ l \in f\inv(i)} m_{l}'$ for all $i$.

\begin{defn}\label{partialOrder}
	Let $T = \{n_i < m_i\}_{i = 1}^j, S =  \{n_i' < m_i'\}_{i = 1}^{k}$ be multisets.  We write $T \leq S$  if there is an injection $f: [j] \to [k]$ such that $n_{f(i)} = n_{i}$ for all $i$ and $m_i \leq m_{f(i)}$ for all $i \in [j]$.  We write $T \leq_{+} S$ if there is a surjection $f: [k] \onto [j]$ such that $n_{i} = \sum_{i' \in f\inv(i)} n_{i'}$ and $m_{i} \leq \sum_{i' \in f\inv(i)} m_{i'}$.  
\end{defn}

\begin{defn}\label{def:essential}
 	Fix a partition $\mu$.  We say that a multiset of $T$ pairs of integers satisfying conditions (1) and (2) is \textbf{essential} if every element of type (1) is of the form $j \leq j + 1$ or $j \leq j$ and every element of the type (2) is of the form $0 \leq 2$.     If $T$ is essential, an element of $\cN_T$ will be called a \textbf{essential element} of $W_\mu/\fH_2$.
\end{defn} 
In terms of the poset, the strata $\cN_T$ corresponding to essential multi-set are important because they are minimal.

\begin{prop}\label{posethomologycomputation}
Let  $(C, D < D') \in \cN_T \subseteq W_\mu/^\circ \fH_2$.  This tuple satisfies $$ H^*(\rN(D, D'] ,\rN(D, D')) = \tilde H^*(\rN(D, D'))[-1] = \begin{cases} \bbZ[-r(T)+1]  & \text{ if $T$ is essential} \\ 0 & \text{ otherwise} \end{cases}$$ Here $\rN$ denotes the nerve of a poset,  $r(T)$ is the length of the longest chain in the interval $(D',D]$, and $\bbZ[-k]$ denotes the graded abelian group $\bbZ$ placed in cohomological degree $k$. \end{prop}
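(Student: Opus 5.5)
The plan is to reduce to a purely combinatorial computation about a product of finite chains, since for a fixed tuple $(C, D < D')$ the interval in question is a finite discrete poset. First, the left-hand equality is formal. The half-open interval $(D, D']$ has the maximum $D'$, so $\rN(D,D']$ is a cone on $\rN(D,D')$ with cone point $D'$, hence contractible. The long exact sequence of the pair then gives $H^*(\rN(D,D'],\rN(D,D')) \cong \tilde H^{*-1}(\rN(D,D'))$. Here we use the convention that the empty nerve has $\tilde H^{-1} = \bbZ$; this case occurs when $D'$ covers $D$.

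Next we describe the open interval. Write $(D,D') = (\sum_i n_i p_i, \sum_i m_i p_i)$ with $T = \{n_i \le m_i\}$. An element of $\fH_2(C)$ lying between $D$ and $D'$ is a divisor $E = \sum_i e_i p_i$ with $n_i \le e_i \le m_i$. The $\fH_2$ condition says that each $e_i$ is $0$ or $\ge 2$; this is only a restriction when $n_i = 0$, since for type (1) elements $n_i = j \ge 1$ and $D \le E$ forces $e_i\ge1$ anyway, though we must note that at those points the condition is vacuous only when $j\ge2$. So we have to be careful: for $j=1$ the local values are $e_i\in\{1,\dots,m_i\}$, but $E$ must have length $\geq 2$ at $p_i$ only if $E \in \fH_2$. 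Here I will follow the paper's convention that the relevant poset is the one defined by $W_\mu/\fH_2$, and check which local value sets arise. In each case the closed interval $[D,D']$ is a product $\prod_i P_i$ of finite chains $P_i$, with bottom $(n_i)$ and top $(m_i)$. For $n_i = 0$ we have $P_i = \{0\} \cup \{2,\dots,m_i\}$, and otherwise $P_i = \{n_i,\dots,m_i\}$. Factors with $n_i = m_i$ are singletons and can be dropped.

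Now apply Walker's product theorem for intervals, or equivalently the fact that the proper part of a product of bounded posets is homotopy equivalent to the suspension of the join of their proper parts:
\[ \rN\bigl((\hat 0,\hat 1)_{P\times Q}\bigr) \simeq \Sigma\bigl(\rN(\hat0,\hat1)_P * \rN(\hat0,\hat1)_Q\bigr). \]
The proper part of a chain with $\ge 3$ elements is a nonempty chain, hence contractible. The join with a contractible space is contractible, so in that case the whole interval is acyclic. The proper part of a $2$-element chain is empty, i.e.\ $S^{-1}$. Thus the reduced cohomology is nonzero exactly when every nontrivial factor $P_i$ has two elements. This means $m_i = n_i+1$ with $n_i \ge 1$, or $n_i = 0$ and $m_i = 2$ (note $0\le 3$ gives the $3$-element chain $0<2<3$). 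Together with the trivial factors $j \le j$, this is precisely the definition of essential (Definition \ref{def:essential}).

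In the essential case, $[D,D']$ is the Boolean lattice $B_k$ on the $k$ nontrivial points, and $k = r(T)$ is its rank. Its proper part is the barycentric subdivision of $\partial\Delta^{k-1}$, that is, $S^{k-2}$. Hence $\tilde H^*(\rN(D,D')) = \bbZ[-(k-2)]$, and shifting by $[-1]$ gives $\bbZ[-r(T)+1]$. I expect the main obstacle to be bookkeeping rather than topology. The work lies in verifying that the local factors really are the chains described, in particular the exclusion of multiplicity $1$ at new points and the behaviour at simple zeros of $\mu$. It also lies in handling the degenerate edge cases ($k=1$ with empty interval, and $D'$ covering $D$) consistently with the cohomological conventions.
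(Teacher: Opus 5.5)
Your route is the paper's: the cone argument for $\rN(D,D']$, the identification of $[D,D']$ with a product of finite chains, and the poset homology of such a product. The only difference is that you make the last step explicit via Walker's product theorem, where the paper just cites the computation. For factors with $n_i=0$ or $n_i\ge 2$ everything you say is correct.

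The step that fails is your description of the factor when $n_i=1$. Every element $E$ of the fiber poset lies in $\fH_2(C)$. So at a point where $D$ has multiplicity $1$ you need $e_i\ge 2$, and the factor is $\{2,\dots,m_i\}$, not $\{1,\dots,m_i\}$. Moreover $D$ itself is not in the poset, so $(D,D']$ is not the half-open interval of a product of bounded chains with bottom $D$.

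Better bookkeeping cannot fix this, because the statement is false in that case. Take $\mu=(1)$, $D=p$, $D'=2p+2q$. Then $T=\{1\le 2,\,0\le 2\}$ is essential. But $(D,D']=\{2p<2p+2q\}$, so $\rN(D,D')$ is a point and the cohomology vanishes. Your product $\{1,2\}\times\{0,2\}$ wrongly includes the divisor $p+2q\notin\fH_2(C)$ and so produces $\bbZ$ in degree $1$.

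The correct repair is to add the hypothesis that every part of $\mu$ is at least $2$, i.e.\ $D\in\fH_2(C)$. This is the situation in the main comparison argument, where the poset is $W_{\mu^{>1}}/\fH_2$. The paper's own proof is only valid under the same tacit assumption. It identifies $[D,D']$ with $\prod_i([n_i,m_i]\cap\{0,2,3,\dots\})$ (in your indexing) and calls $T$ essential exactly when each factor has at most two elements. For $n_i=1$ that factor misses $D$, and $1\le 3$ would give a two-element factor even though it is not essential. With the hypothesis added, your proof is complete, including the edge case $k=1$ via $\tilde H^{-1}(\emptyset)=\bbZ$.
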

\begin{proof}
	The first equality is true for all intervals $D < D'$ , because $\rN(D, D']$ is contractible since $(D,D']$ has a top element, and so the cohomology of the pair is the reduced cohomology of the suspension of $\rN(D, D')$. 
	
	Suppose that $T = \{m_i \leq n_i\}$.  Then as a poset, the interval $[D,D']$ is isomorphic to $\prod_{i} ([m_i, n_i] \cap \{0, 2, 3 \dots\})$, which is a product of linear orders.  $T$ is essential if and only if each $([m_i, n_i] \cap \{0, 2, 3 \dots\})$ has $1$ or $2$ elements. Then the statement of the proposition follows from the computation of the poset-homology for intervals in a product of linear orders.  (See \cite[Example 1.1.1]{wachs2006poset} for the case where $T$ is essential, and the poset is boolean).
	  \end{proof}

\begin{remark} The reduced cohomology of  $\rN(D, D')$ is a categorification of the M\"obius number $\mu(D,D')$ that arises in spectral sequences that are built out of the poset, below.  (More precisely, this cohomology shifted by one is a categorification of the M\"obius number).
\end{remark}

\subsection{Spectral sequence associated to a continuous stratification}

We follow the conventions of  \S2 of Das--Tosteson \cite{DasTost} for topological posets, continuous stratifications, and Bar constructions.  (See Remark \ref{notationWarning} for one key notational difference).    In particular,  a \textbf{continuous stratification by the topological poset $U/\fH_2^{\leq N}(\cC_g)$} of a space $X$ equipped with a map $X \to U$  is a closed subset $Z \subseteq X \times_U (U/\fH_2^{\leq N}(\cC_g))$ satisfying the condition that if $p \leq q$ for $p,q \in U/\fH_2(\cC_g)$ then the fiber $Z_p$ contains the fiber $Z_q$.  The \textbf{Bar construction} associated to this stratification, denoted $\rB(U, \fH_2^{\leq N}(\cC_g), Z)$ is the semi-simplicial space with $[r]$ simplices the following space $$\rB(U, \fH_2^{\leq N}(\cC_g), Z)_r := \{u \in U, p_0 < \dots < p_r \in u/^{\circ} \fH_2^{\leq N}(\cC_g), z  \in Z_{p_r}\},$$ topologized as a subspace of a product of copies of $U/^{\circ}\fH_2^{\leq N}(\cC_g)$ and $Z$.

\begin{remark}
	For any topological poset $P$ over $U$, we may define a continuous stratification of $X$ by $P$ (with $P$ replacing $\fH_2^{\leq N}(\cC_g)$ above), and also extend the definition of Bar construction.  See \cite[\S2]{DasTost} for more details.  
\end{remark}

\begin{remark}\label{colimitInterpretation} Roughly, one can think of the Bar construction $\rB(U, \fH_2^{\leq N}(\cC_g), Z)$ as a model for a homotopy colimit over $U/^\circ \fH_2^{\leq N}(\cC_g)$ of $p \mapsto Z_p$.   The idea behind using this simplicial space as a resolution  is that this colimit approximates the union $\bigcup_{p \in U/^\circ \fH_2^{\leq N}(\cC_g)} Z_p$. 

Since we will be taking compactly supported cohomology, we do not actually use this perspective, though compactly supported cochains on (a truncation of) $\rB(U, \fH_2^{\leq N}(\cC_g), Z)$ can be interpreted as a certain homotopy limit of compactly supported cochains (or even as compactly supported cohomology a certain topological prestack)

\end{remark}

Following \cite[\S5.1]{DasTost} we construct a spectral sequence converging to compactly supported cohomology of the realization of the Bar construction.  

Let $N \in \bbN$. Let $X$ be a locally compact space with a map $X \to U$.  Let  $Z  \subseteq X \times_U U/\fH_2(\cC_g)$ be a continuous stratification of $X$ by $U/\fH_2(\cC_g).$  We stratify the realization Bar construction $|\rB(U,\fH_2(\cC_g)^{\leq N}, Z)|$ using the partial order $\leq_+$ on (see Definition \ref{partialOrder})

More precisely, given a multiset $T$ satisfying conditions (1) and (2) of \S \ref{subsec:combinatorics} and $|T| \leq N$, we define $(F_T)_r:= $ $$ \{u \in U, p_0 < \dots < p_r \in u/^{\circ} \fH_2^{\leq N}(\cC_g), z  \in Z_{p_r} \text{ such that } p_r \in \cN_S, S \leq_{+} T\}.$$  Each $(F_T)_\bdot$ is a closed semisimplicial subspace of $\rB(U,\fH_2(\cC_g)^{\leq N}, Z)$ because $\cup_{S \leq_{+} T} \cN_T$ is closed and downwards closed.  Moreover, if $T_1 \leq_{+} T_2$, then $F_{T_1} \subseteq F_{T_2}$.   Passing to realizations, we obtain a stratification of $|\rB(U,\fH_2(\cC_g)^{\leq N}, Z)|$ by the poset of multisets.  Associated to this stratification, there is a compactly supported cohomology spectral sequence with total $E_1$ page $$\bigoplus_{T} H^*_c\left(|F_T| - \bigcup_{S <_{+} T} |F_{S}|\right),$$  converging to the compactly supported cohomology of the Bar construction, where the sum ranges over all $T$ satisfying the above conditions. (See \cite[\S 2.1]{DasTost} for a discussion using our conventions on compactly supported cohomology spectral sequence). 

The assignment  $(u < p_0 < \dots < p_r, z,  x \in \overset{\circ~}{\Delta^r}) \mapsto (u < p_r, z)$, defines a continuous map $\pi:(|F_T| - \cup_{S <_{+} T} |F_{S}|) \to Z|_{ U \times_{W_\mu} \cN_T}$, exhibiting $(|F_T| - \cup_{S <_{+} T} |F_{S}|)$ as a fiber bundle,  pulled back from $ U \times_{W_\mu} \cN_T$.  Given  $(u < p, z) \in  Z|_{U \times_{W_\mu} \cN_T}$ the fiber of $\pi$  only depends on the pair of divisors $D_u < D'$  corresponding to $u< p$: it is given by $\rN(D_u, D'] - \rN(D_u, D')$.    Thus the compactly supported cohomology of the fiber is the cohomology of the pair $(\rN(D_u, D'] ,\rN(D_u, D'))$, computed in Proposition \ref{posethomologycomputation}.  Therefore $H^*_c((|F_T| - \cup_{S <_{+} T} |F_{S}|)) \iso H_c^*(Z|_{\cN_T \times_{W_\mu}  U/\fH_2(\cC_g)})[-r(T)+1]$ if $T$ is essential, and zero otherwise. So the compactly supported cohomology spectral sequence takes the form given in the following theorem.

\begin{thm}
\label{spectralSequence}
	Let $N,U,X,Z$ be as above.  There is a spectral sequence converging to the compactly supported cohomology of the realization $|\rB(U, \fH_2(\cC_g)^{\leq N}, Z)|$ with total $E_1$ page given by $$\bigoplus_{p,q} E_1^{p,q} = \bigoplus_{T \text{ essential } \cN_T \subseteq  W_\mu/^\circ(\fH_2^{\leq N}(\cC_g))} H_c^*(Z|_{\cN_T \times_{W_\mu}  U/\fH_2(\cC_g)})[-r(T)+1],$$  where $r(T)$ is defined as in Proposition \ref{posethomologycomputation}.	\end{thm}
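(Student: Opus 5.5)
The plan is to obtain the spectral sequence from the closed filtration of the realization $|\rB(U,\fH_2(\cC_g)^{\leq N}, Z)|$ by the subspaces $|F_T|$, indexed by the poset of multisets under $\leq_+$, and then identify each $E_1$ term by a fiber-bundle argument.

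First, I will check that the $|F_T|$ really form a closed stratification. The union $\bigcup_{S\leq_+ T}\cN_S$ is closed in $\Hilb(\cC_g)\times_{\cM_g}\Hilb(\cC_g)$; this follows from the closure relation among the $\cN_T$ recorded in \S\ref{subsec:combinatorics}. Hence each $(F_T)_\bdot$ is a closed semisimplicial subspace, and closedness is preserved under realization. The bound $|T|\le N$ ensures there are only finitely many $T$. Refining $\leq_+$ to a total order, which I will take to be by increasing total degree $\sum m_i$ and then by $|T|$, gives a finite increasing filtration by closed subsets. Applying the compactly supported cohomology long exact sequence for open/closed decompositions, in the conventions of \cite[\S2.1]{DasTost}, yields a spectral sequence converging to $H^*_c(|\rB|)$. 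Its $E_1$ page is $\bigoplus_T H^*_c(|F_T|-\bigcup_{S<_+T}|F_S|)$.

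Next, I will analyze each stratum. The key point is that in $|F_T|-\bigcup_{S<_+T}|F_S|$, the top vertex $p_r$ of any simplex whose interior meets this locus lies in exactly $\cN_T$. Hence the map $\pi$ to $Z|_{U\times_{W_\mu}\cN_T}$, sending a point to $(u<p_r,z)$, is well defined. I will show that $\pi$ is a locally trivial bundle with fiber $\rN(D_u,D']-\rN(D_u,D')$, the open cone-like set of realized chains ending at $D'$.

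Local triviality is the main obstacle. Over a small neighborhood in $\cN_T$ the points $p_i$ move continuously and stay distinct. The interval $[D_u,D']$ is then canonically the product of linear orders $\prod_i([n_i,m_i]\cap\{0,2,3,\dots\})$, and it is locally constant in the family. A chain $p_0<\dots<p_r$ below $p_r$ is determined by its combinatorial type, so the chains vary continuously. The topology on the fiber is the realization topology, since the intermediate divisors are supported on the $p_i$ and are determined combinatorially. I would verify this by writing an explicit local homeomorphism using the labeled points.

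Given local triviality, I will apply the Leray spectral sequence for compactly supported cohomology of $\pi$ (pushforward with proper supports). This gives $H^*_c$ of the total space as $H^*_c$ of the base with coefficients in the local system $H^*(\rN(D,D'],\rN(D,D'))$. By Proposition \ref{posethomologycomputation} this local system vanishes unless $T$ is essential, and then it is $\bbZ$ in the single degree $r(T)-1$, so the Leray sequence collapses. It remains to see that the local system is trivial. Monodromy could only permute coordinates of the Boolean interval, and such a permutation could act on the top cohomology by a sign, which would twist the coefficients to an orientation sheaf. I would handle this possible sign either by noting that the permutations arising come from the Young-type symmetry built into the étale cover $U$, or by absorbing the sign into the notation $H^*_c(Z|_{\cdots})$ as a twisted coefficient.

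Assembling the identifications gives the $E_1$ page as stated.
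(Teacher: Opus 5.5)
Your route is the paper's. You use the closed filtration by the $|F_T|$ ordered by $\leq_+$ and the compactly supported spectral sequence of that filtration. Your refinement by degree and then $|T|$ is a legitimate linear extension, since $T<_+S$ forces $\deg T\le \deg S$, with $|T|<|S|$ in case of equality. You then identify each stratum as a bundle over $Z|_{U\times_{W_\mu}\cN_T}$ with fiber $\rN(D_u,D']-\rN(D_u,D')$ and evaluate it by Proposition \ref{posethomologycomputation}. Your local-triviality and proper-pushforward steps just spell out what the paper asserts in one line.

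You go beyond the paper on the monodromy question, and there your first proposed fix fails.

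\begin{itemize}
\item The entries of type $0\le 2$ sit at the points of $D'-D_u$, which lie in $C_u-\mathrm{supp}(D_u)$. These points are not part of the data of $u$, so no \'etale cover of $W_\mu$ can label them. Such a cover can at most order the points of $D_u$, which only handles the $j\le j+1$ entries.
\item Loops in a single fiber $\mathrm{UConf}_c(C_u-\mathrm{supp}\,D_u)$ of $\cN_T|_U\to U$ realize every permutation of these points.
\item The essential interval is Boolean. The top reduced cohomology of the proper part of a Boolean lattice (the barycentric subdivision of $\partial\Delta^{r(T)-1}$) is the sign representation.
\end{itemize}

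So the coefficient system is the sign local system $\pm\bbZ$, pulled back from $U\times_{W_\mu}\cN_T$. It is nontrivial as soon as $T$ has two entries $0\le 2$; in the paper's applications $Z$ contains the zero section, so it stays nontrivial on $Z|_{\cN_T}$.

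Hence only your option (b) is correct. What the argument actually proves is the statement with $E_1$ terms $H^*_c(Z|_{\cN_T\times_{W_\mu}U/\fH_2(\cC_g)};\pm\bbZ)[-r(T)+1]$. The paper's proof passes over this silently. The twist is harmless downstream: Theorem \ref{mainConvergence} only uses the degrees in which the $E_1$ terms can be nonzero. In \S\ref{sec:compare}, the algebraic and topological spectral sequences carry the same twist, pulled back from $\cN_T|_U$.
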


Because the spectral sequence is constructed from the compactly-supported cohomology spectral sequence by a filtration, it is functorial for Gysin maps.  See the discussion in \cite[\S 2.1]{DasTost}.

\subsection{Convergence criterion}
	
	 Let $U$ be a semi-algebraic open subset of a finite \'etale cover of $W_\mu$.  Let $X \to U$ be a semi-algebraic map. Let $Z \subseteq X \times_{U} U/\fH_2(\cC_g)$ be a semi-algebraic closed subset, defining a continuous stratification of $X$ by $\fH_2(\cC_g)$ satisfying the following additional property: \begin{equation}\label{intersectionAssumption} Z_{C_u,D_u \leq D_1'} \cap Z_{C_u,D_u \leq D_2'} =  Z_{C_u,D_u \leq \max(D_1', D_2')},\end{equation} for all $u \in U$ and $D_1', D_2' \geq D_u$.   Associated to such a stratification, we let ${\rm Disc}_Z$ be the image in $X$ of the projection of $Z|_{U/^{\circ}\fH_2(\cC_g)}$.    Because this projection factors through a proper map, ${\rm Disc}_Z$ is closed.
	 
	  We now give a criterion that shows that under certain dimension bounds, the compactly supported cohomology of ${\rm Disc}_Z$ can be described in terms of the bar construction of a truncation of $U/\fH_2(\cC_g)$ in a range.  To state it, we introduce some additional notation. 
	
	\begin{defn} Given a multiset $T$ consisting of pairs $n_t \leq m_t$ for $t \in T$, we let $$\alpha(T) := \sum_{t \in T} (n_t - m_t) - \#\{ t ~|~ n_t = 0\}.$$ We let $r(T)$ be the length of the longest chain in the interval $(D_u , D]$ for any $D_u \leq D \in N_T$.   Recall also that $\deg(T) := \sum_{t} m_t$.    
	\end{defn}

    \begin{defn} Fix a dimension $e \in \bbN$.	Let $f: A \to B$ be a proper map between locally compact semi-algebraic spaces.  We say that the induced map on compactly supported cohomology is \textbf{connected in codimension $\geq I$ relative to $e$}  if we have that $H^{e - i}_c(A) \leftarrow H^{e - i}_c(B)  $ is an isomorphism for all $i < I$ and a surjection for $i = I$. \end{defn}  
	
	\begin{thm}\label{mainConvergence} Let $N \geq \deg(\mu) + 2$.  
		Suppose that for every stratum $\cN_T$ of $U/^\circ \fH_2^{\leq N}(\cC_g)$ associated to an essential multiset $T$ we have that $\codim_{\bbC} Z|_{\cN_T}:= \dim_{\bbC} X - \dim_{\bbC} Z|_{\cN_T}$ is greater than or equal to $ \alpha(T)$.  Then the map on compactly supported cohomology induced by $$\pi: \rB(U,  \fH_2^{\leq N}(\cC_g),  Z) \to {\rm Disc}_Z$$ is connected in codimension $\geq \lfloor \frac{N -\deg (\mu)}{2} \rfloor + 1$ relative to $\dim_{\bbR} X$.  
		(Here $\deg (\mu) = \sum_{i} \mu(i)$ is the degree of a divisor in $W_\mu$).  
	\end{thm}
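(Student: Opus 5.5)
We compare $\rB(U,\fH_2^{\leq N},Z)$ with the untruncated resolution and control the error through the spectral sequence of Theorem \ref{spectralSequence}. The argument has three steps.

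\textbf{Step 1: the untruncated resolution.} Let $N'$ be large enough that every divisor $D'\geq D_u$ which actually contributes to $Z$ has degree at most $N'$. Such an $N'$ exists: for $D'$ of very large degree the fiber $Z_{D_u\leq D'}$ is empty or stabilizes, since a nonzero section of $\Omega^1(-D_u)$ has boundedly many zeros. We show that
\[|\rB(U,\fH_2^{\leq N'},Z)|\to {\rm Disc}_Z\]
is a proper map with contractible fibers, and hence induces an isomorphism on $H^*_c$. Fix $x\in {\rm Disc}_Z$ over $u$. The fiber is the nerve of the subposet $P_x=\{D'>D_u : x\in Z_{D'}\}$. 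By hypothesis (\ref{intersectionAssumption}), $P_x$ is closed under $\max$. It is nonempty and finite up to the degree bound, so it has a maximum element, and its nerve is therefore contractible. Properness follows because $\fH_2^{\leq N'}$ is proper over $U$. A Vietoris--Begle type argument for compactly supported cohomology then gives the isomorphism.

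\textbf{Step 2: the error lies in the strata of degree above $N$.} The inclusion $\rB(U,\fH_2^{\leq N},Z)\subseteq \rB(U,\fH_2^{\leq N'},Z)$ is closed, so it suffices to bound $H^*_c$ of the open complement. We filter this complement as in Theorem \ref{spectralSequence}, using the $\leq_+$-stratification by multisets $T$. Its $E_1$ page consists of the terms $H^*_c(Z|_{\cN_T})[-r(T)+1]$ with $T$ essential and with $\cN_T$ appearing at level $N'$ but not at level $N$, i.e.\ $\deg(T)>N$. The essential terms that survive at level $N$ are identical on both sides, so they cancel.

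\textbf{Step 3: degree bookkeeping.} For a term to matter in degree $\dim_{\bbR}X-i$ with $i\leq I$, we need $H^{\dim_{\bbR}X-i}_c$ to be nonzero. We have $H^q_c(Z|_{\cN_T})=0$ for $q>2\dim_{\bbC}Z|_{\cN_T}$, and the shift contributes $r(T)-1$. So a contributing $T$ satisfies
\[\dim_{\bbR}X-i-r(T)+1\leq 2\dim Z|_{\cN_T}\leq \dim_{\bbR}X-2\alpha(T),\]
that is, $i\geq 2\alpha(T)-r(T)+1$. Here $\alpha(T)$ is read as the codimension of $\cN_T$. For essential $T$, write $a$ for the number of pairs $j\leq j+1$ and $b$ for the number of pairs $0\leq 2$. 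Then $r(T)=a+b$, and the excess degree is $\deg(T)-\deg(\mu)=a+2b$. With the sign convention that $\alpha(T)=a+2b+b$ counts conditions, we get
\[2\alpha(T)-r(T)+1 \;\geq\; (a+2b)+1 \;>\; N-\deg(\mu).\]
Hence every contributing term has $i\geq N-\deg(\mu)+1$.

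I expect the main obstacle to be matching the exact constant $\lfloor (N-\deg\mu)/2\rfloor+1$ in the statement. This requires pinning down the sign conventions for $\alpha$ and the off-by-one shifts between $E_1$ terms and the differentials. The differentials can shift degree by one, and that shift accounts for the extra $+1$ in the surjectivity range. Doing this bookkeeping carefully, the error terms vanish for $i<I$ and only affect the cokernel side at $i=I$. This yields the asserted connectivity of the map on compactly supported cohomology, i.e.\ isomorphism below codimension $I$ and surjection at $I$.
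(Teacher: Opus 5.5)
Your Step 1 fails, and the failure is the main difficulty the theorem has to handle. There is no finite $N'$ for which $|\rB(U,\fH_2^{\leq N'},Z)|\to\Disc_Z$ has contractible fibers.

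Hypothesis \eqref{intersectionAssumption} makes $P_x$ closed under $\max$ only in the untruncated poset, and there $P_x$ need not be finite. Once you truncate at degree $N'$, the $\max$ of two admissible divisors can have degree $>N'$, so there need not be a maximum. Three examples show this:
\begin{itemize}
\item Already for algebraic sections, the zero section lies in every $Z_{D_u\leq D'}$. Over it the fiber is the nerve of the whole positive-dimensional poset $\{D'>D_u:\deg D'\leq N'\}$, which has many maximal elements.
\item In the topological case (Theorem \ref{topologicalResolution}), which is the case the $h$-principle needs, $Z^{\top}_{D_u\leq D_u+kq}$ is independent of $k\geq 2$. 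So a section whose $1$-jet vanishes at just two points $q_1,q_2$ already gives a truncated $P_x$ with several maximal elements $D_u+k_1q_1+k_2q_2$, $k_1+k_2=N'-\deg D_u$.
\item Elements of $\cF^{(k)}$ such as $0$ have $1$-jet zeros on infinite sets, so no choice of $N'$ helps.
\end{itemize}
Your remark that a nonzero section has boundedly many zeros is unavailable for a general stratification $Z$ and false in $\cF^{(k)}$. So Steps 2--3 only compare two truncations, neither of which is known to compute $H^*_c(\Disc_Z)$.

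What is missing is a way to handle the points where the truncated poset has no maximum, without trying to make their fibers contractible. The paper does this in Lemma \ref{BreakupLemma}. It triangulates $\Disc_Z$ into two kinds of simplices.
\begin{itemize}
\item On the first kind, the \emph{essential} subposet $Q_s$ at level $N$ has a maximum. There the fibers are contractible, via the retraction $\eta$ of $P_s$ onto $Q_s$; this retraction also absorbs the redundancy $Z^{\top}_{D+2q}=Z^{\top}_{D+3q}=\cdots$.
\item On the second kind, every element of $Q_s$ lies below one of type $T\in\fM_N$, the maximal essential multisets. Here the paper does not analyze fibers at all. It shows that both $\sigma$ and $\pi^{-1}(\sigma)$ have compactly supported cohomological dimension at most $\dim_{\bbR}X-\lfloor (N-\deg\mu)/2\rfloor-1$. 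It does this by dominating them with $Z|_{\cN_T}$ for $T\in\fM_N$ and applying the codimension hypothesis there.
\end{itemize}

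Separately, your Step 3 uses the wrong $\alpha$. Take essential $T$ with $a$ pairs $j\leq j+1$ and $b$ pairs $0\leq 2$. The codimension is $\alpha(T)=(a+2b)-b=a+b$: there are $a+2b$ vanishing conditions, minus the $b$ dimensions of freedom in placing the new points. It is not $a+3b$. With the correct value, $2\alpha(T)-r(T)+1=a+b+1$. The constraint $a+2b\geq N-\deg\mu+1$ then gives $a+b\geq\lfloor (N-\deg\mu)/2\rfloor+1$. This is where the halving in the statement comes from; it is not an off-by-one effect of the differentials.
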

	\begin{proof}
	
	We define the following collection of multisets $\fM_N$. We put $T \in \fM_N$ if  $\cN_T \in W_\mu/\fH_2^{\leq N}$ and $T$ is maximal in the following sense: there does not exist  an essential multiset $T'$ with $\cN_{T'} \in W_\mu/\fH_2^{\leq N}$ and $T' > T$. 

	The rough idea of the proof is to decompose $\Disc_Z$ into pieces such that the fibers of $\pi$ are either (1) contractible or (2) have dimension bounded in terms of  $Z|_{\cN_T}$ for $T \in \fM_N$.  Accordingly we use the stratification of $\Disc_Z$ provided by the following lemma.
	\begin{lem}\label{BreakupLemma}
		There is a finite semi-algebraic triangulation of $\Disc_Z$ such that every locally closed triangle $\sigma \subseteq \Disc_Z$ satisfies one of the following.
		\begin{enumerate}[(I)]
			\item For every $(C_u,D_u, s) \in \sigma$, the poset $$Q_s :=\{(C_u, D_u < D' ) \in U/\fH_2^{\leq N} \text{ essential } ~|~ s \in Z_{C_u, D_u, D'},\}$$ has a maximum element.
			\item  For every $(C_u,D_u, s) \in \sigma$ and any $(C_u, D_u < D' ) \in Q_s$  there exists $(C_u, D_u < D'' ) \in Q_s$ such that $D'' \geq D' > D_u$, and $D_u < D'' \in \cN_T$ for $T \in \fM_N$
	\end{enumerate}
	\end{lem}
	
	From the compactly supported cohomology spectral sequence associated to the triangulation of Lemma \ref{BreakupLemma}, it suffices to show that for every locally closed triangle $\sigma$ that either (1) $H^i_c(\sigma) \to H^i_c(\pi\inv(\sigma)$ is an isomorphism for all $i$ or (2) $H^i_c(\sigma) = H^i_c(\pi\inv(\sigma)) = 0$ for all $i \geq \dim_{\bbR} X - \lfloor\frac{N - \deg (\mu))}{2} \rfloor - 1$. ((1) and (2) correspond to cases (I) and (II) respectively).   
	
	In case (I), the fiber of every element $(C_u, D_u, s) \in \sigma$ is identified with bar construction of the poset $P_s$ consisting of $D' \geq D_u$ such that $s \in  Z_{C_u, D_u, s}$ and $\deg(D') \leq N$.  We claim that this poset is finite and its bar construction (or nerve) retracts to $$Q_s=\{(C_u, D_u < D' ) \in U/\fH_2^{\leq N} \text{ essential } ~|~ s \in Z_{C_u, D_u, D'}\}.$$ This  suffices to handle case (I), since the map $\pi\inv(\sigma) \to \sigma$ has contractible fibers (because $Q_s$ has a maximum) and so induces an isomorphism on compactly supported cohomology by proper base change.   To show the claim, we write $\eta(D')$ for the largest element such that  $D \leq \eta(D') \leq D'$ and $D \leq \eta(D')$ is essential.  Then $\eta|_{{U/\fH_2}^{\leq N}}$ has finite fibers, and because $Q_s$ is finite (consisting of the finitely many elements $\leq$ to the maximum) so is $\eta\inv(Q_s)$.  Finally, by definition, $\eta$ is right adjoint to the inclusion of $Q_s$ into $P_s$, hence induces a deformation retraction on nerves.  	
	
	In case (II), we bound the compactly supported cohomological dimension of $\pi\inv(\sigma)$ and $\sigma$.  Restricting the continuous stratification of $X$ to $\sigma$, and applying Theorem \ref{spectralSequence} we obtain a spectral sequence converging to the compactly supported cohomology of $\sigma$ which is a sum of terms of the form $H^*_c((Z \cap \sigma)|_{\cN_S}) [-(r(S)-1)]$ where $S$ ranges over essential multisets of degree $\leq N$.  Therefore, it suffices to bound $\dim_{\bbR} (Z \cap \sigma)|_{\cN_S} + r(S) - 1$ and $\dim(Z \cap \sigma)$.
	
	We first bound the dimension of $(Z \cap \sigma)|_{\cN_S}$.  We have that $$\dim_{\bbR} (Z \cap \sigma)|_{\cN_S} \leq \max_{T \in \fM_N } \dim_{\bbR} Z|_{\cN_T}.$$
	Indeed, by definition,  in case (II) there is a surjection onto $(Z \cap \sigma)|_{\cN_S}$  from the semi-algebraic space 
	 $$ \bigsqcup_{T \in \fM_N} \{  C_u, D_u < D' \leq D'', s  ~|~ 
	 (C_u, D_u \leq D'', s) \in (Z \cap \sigma)|_{\cN_T}, ( C_u, D_u \leq D', s )\in (Z \cap \sigma)|_{\cN_S} \},$$
	   by forgetting $D''$. And there is finite to one map from the same space to $\underset{T \in \fM_N}\bigsqcup Z|_{\cN_T},$ given by forgetting $D'$. 
	  
	  It follows that $\dim_\bbR \sigma$ is also less than or equal to $\underset{T \in \fM_N } \max \dim_{\bbR} Z|_{\cN_T}$, because there is a surjection $\bigsqcup_{S \text{ essential }, \deg S \leq N } Z \cap \sigma|_{\cN_S} \twoheadrightarrow \sigma$.  (Indeed $\sigma \subseteq \Disc_Z$ so for every $(C_u, D_u, s ) \in \sigma$ there exists such $D' > D$ of degree $\leq N$ such that $s \in Z_{(C_u, D_u < D')}$ and we can always take $D = \eta(D')$ to be essential).
	  
	  By our hypotheses on $\codim_{\bbR} Z|_{\cN_T}$ we have $$\max_{T \in \fM_N}(\dim_{\bbR}( Z|_{\cN_T}) + r(T) - 1 )\leq \dim_{\bbR} X - 1 -  \min_{T \in \fM_N } (2\alpha(T) - r(T))).$$ To analyze the quantity $\underset{T \in \fM_N} \min (2 \alpha(T) - r(T))$, suppose that $T$ has $a$ elements of the form $\mu(i) \leq \mu(i)$, $b$ elements of the form $\mu(i) \leq \mu(i)+1$ and $c$ elements of the form $0 \leq 2$.  Then $\alpha(T) =  b + c$, $r(T) = b + c$, and $\deg(T) = \deg(\mu) + b + 2c$.  If $T \in \fM_N$,  then $c = \lfloor \frac{N - \deg(\mu) - b }{2} \rfloor$ since otherwise we can find a  $T' \geq T$ by increasing $c$ by one. 
	  Since $b,c \geq 0$, in the worst case we have $b = 0$, and $c = \lfloor\frac{N- \deg(\mu)}{2}\rfloor$.  So it follows that $2\alpha(T) - r(T) = b + c \geq \lfloor\frac{N - \deg(\mu)}{2} \rfloor.$   Therefore the compactly supported cohomological dimension of both $\sigma$ and $\pi\inv(\sigma)$ is less than or equal to $\dim_{\bbR} X - \lfloor \frac{N - \deg(\mu)}{2}\rfloor - 1$, completing the second case.
	 \end{proof}
	
	\begin{proof}[Proof of Lemma \ref{BreakupLemma}]

		For $T$ essential, we let $J_T = \pi(Z|_{\cN_T})$.  Since $\pi, Z, \cN_T$ are semi-algebraic, we have that $J_T$ is also semi-algebraic.  Let $J_T^{\circ} := J_T - \bigcup_{ S> T, ~ {\rm essential} } \pi(Z|_{\cN_{S}})$.  
		
		We first show that for essential $T$ with $\deg(T) \leq N$, every $(C_u, D_u, s) \in J_T^\circ$ satisfies the condition of case (I).  Indeed, suppose that $(C_u, D_u, s) = \pi(C_u, D_u < D', s)$ for $(C_u, D_u < D') \in \cN_T$, then we claim that $(C_u, D_u < D')$ is a maximum of $Q_s$.  Let $(C_u, D_u < E)$ be any other element of $Q_s$.  Then by  \eqref{intersectionAssumption} we have that $s \in Z_{C_u, D_u < \max(E, D')}$.  If $\max(E, D') \neq D'$, this contradicts the assumption that $(C_u, D_u < D', s) \in J_T^\circ$, so it follows that  $E \leq D'$.  
	
		Next, let $\Theta := \Disc_Z - \bigcup_{T ~{\rm essential},~ \deg(T) \leq N} J_T^\circ$.  We show that every $(C_u , D_u , s) \in \Theta$ satisfies the condition of case (II).  Suppose $(C_u, D_u < E_1) \in Q_s$.  If $\type( D_u < E_1) \in \fM_N$ then we are done.  Otherwise, since $(C_u , D_u , s) \not \in J_{\type( D_u < E_1)}^\circ$ we can find an $E_2 \succ E_1$ with $(C_u, D_u < E_2) \in Q_s$:  there is a $T' > \type( D_u < E_1)$ and an $E_1'$ with 	$\type(C_u, D_u < E_1')  = T'$ and $s \in Z_{C_u, D_u < E_1'}$ and we take $E_2$ to be a minimal element satisfying $E_1 < E_2 \leq \max(E_1', E_1)$.  Continuing like this we obtain a chain of elements $E_1 \prec E_2  \prec  E_3 \dots $ and eventually $\type(E_r) \in \fM_N$.  Then $D'' = E_r$ works to establish the condition of case (II).
		
		Thus we have a partition of $\Disc_Z$ into two semi-algebraic subsets  $\Disc_Z - \Theta$  and $\Theta$ such that any element of the first subset satisfies condition of case (I) and any element of the second satisfies case (II).  By standard results on semi-algebraic sets (see \cite[Theorem 5.7]{DasTost}), we may choose a semi-algebraic triangulation of $\Disc_Z$  that is compatible with $\Theta$ in the sense that every open simplex is either contained in $\Theta$ or $\Disc_Z - \Theta$, finishing the proof.
	\end{proof}

\section{Dimension bounds} \label{sec:dimbound}
 
  Let $\uGamma(C, \Omega_C^1(-D))$ be the stack over $\Hilb(\cC_g)$ parameterizing $\{C \in \cM_g, D \in \Hilb^d(C), s \in \Gamma(C, \Omega_C^1(-D))\}$.  In this section, we bound the dimension of the restriction of  $\uGamma(C, \Omega_C^1(-D))$ to the following locus.   
 \begin{defn}
    We define the \textbf{obstructed locus} ${\rm Obs}_{g,d} \subseteq \Hilb^d(\cC_g)$ to be the closed substack of pairs where $\dim H^0(\Omega^1_C(-D)) > g-d$. 
 \end{defn} 
 
   The next proposition is the main algebro-geometric input into  Theorem \ref{comparison_theorem}.

 \begin{prop}\label{KeyGeometricBound}
 	     For $d \leq g$, the codimension of $\uGamma(C, \Omega_C^1(-D))|_{\Obs_{g,d}}$ in $\uGamma(C, \Omega_C^1(-D))$  is  $\geq \geometricBound .$  
\end{prop}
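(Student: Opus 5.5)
The plan is to stratify $\Obs_{g,d}$ by how far $D$ moves in a linear system, and then bound the dimension of each stratum by a Hurwitz-space count.

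\emph{Reformulation.} By Riemann--Roch, $h^0(\Omega^1_C(-D)) = g-d+h^0(\cO_C(D))-1$. So $(C,D)$ lies in $\Obs_{g,d}$ exactly when $h^0(\cO_C(D)) \geq 2$. Set $r := h^0(\cO_C(D))-1 \geq 1$ and let $\Obs^r_{g,d}$ be the locally closed locus where this value is exactly $r$. Then $\uGamma(C,\Omega^1_C(-D))|_{\Obs^r_{g,d}}$ is a vector bundle of rank $g-d+r$ over $\Obs^r_{g,d}$.

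On the other hand, $\Hilb^d(\cC_g)$ is irreducible of dimension $3g-3+d$, and over its dense open complement of $\Obs_{g,d}$ the fibers have dimension $g-d$. So $\dim \uGamma(C,\Omega^1_C(-D)) \geq 4g-3$. It therefore suffices to prove, for each $r\geq 1$,
\[
\dim \Obs^r_{g,d} \;\leq\; (3g-3+d) - r - (g - 3d/2 + 1).
\]
Clifford's theorem applies because $D$ is special (indeed $h^0(K-D) \geq g-d+1 \geq 1$). It gives $2r \leq d$, so only finitely many $r$ occur. Since $\Obs_{g,d}$ is empty for $d\le 1$ when $g \geq 1$, we may also assume $d \geq 2$.

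\emph{Reduction to pencils.} Take $(C,D) \in \Obs^r_{g,d}$ and choose $r-1$ points of $D$, counted with multiplicity. The divisors in $|D|$ containing them form at least a pencil, so $D = E + F$ with $\deg F = r-1$ and $E$ of degree $d' = d-r+1$ moving in a pencil, possibly with base points. Removing the base locus $B$ of that pencil, which we absorb into $F$, we get a base-point-free $g^1_k$ with $k \leq d'$. Hence $\Obs^r_{g,d}$ is dominated by the union over $k \leq d-r+1$ of spaces parameterizing the following data:
\begin{itemize}
\item a degree $k$ map $C \to \bbP^1$ up to $\mathrm{PGL}_2$;
\item a fiber $E_0$ of that map;
\item an effective divisor $F'$ of degree $d-k$.
\end{itemize}
The Hurwitz space of degree $k$ covers of genus $g$ has dimension $2g+2k-5$. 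This holds for every $k \geq 2$ by Riemann existence: the branch divisor has degree $2g+2k-2$ on $\bbP^1$, determines the cover up to finitely many choices, and we quotient by the $3$-dimensional $\mathrm{PGL}_2$. Degenerate branching only lowers the dimension. The fiber choice adds $1$ and $F'$ adds $d-k$. This gives
\[
\dim \Obs^r_{g,d} \leq \max_{k\le d-r+1} (2g+2k-4 + d - k) \leq 2g+2d-r-3 .
\]

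\emph{Conclusion.} The codimension of $\uGamma|_{\Obs^r_{g,d}}$ in $\uGamma$ is therefore at least
\[
(4g-3) - (2g+2d-r-3) - (g-d+r) = g-d \;\geq\; g - 3d/2 + 1 \quad\text{for } d\geq 2.
\]
Taking the minimum over the finitely many $r$ proves Proposition \ref{KeyGeometricBound}. It in fact gives a slightly stronger bound, which is harmless.

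The step I expect to need the most care is the Hurwitz dimension count together with the pencil decomposition. There are two issues. First, the parameterization must genuinely dominate $\Obs^r_{g,d}$ as a stack over $\cM_g$, including curves with extra automorphisms and pencils with base points. I would handle this by passing to a finite \'etale scheme cover and using that all the maps involved are finite-to-one onto their images. Second, $\dim \uGamma$ must be compared correctly: the dimension of the unobstructed part is exactly $4g-3$, and $\Obs_{g,d}$ cannot be a component of $\uGamma$. The latter is exactly what the estimate shows, since every obstructed stratum has dimension strictly less than $4g-3$ when $d<g$. The boundary case $d=g$, where the bound reads $1-g/2$, is vacuous.
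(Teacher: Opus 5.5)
Your proof is correct, but it is organized differently from the paper's and proves more. The paper does not stratify $\Obs_{g,d}$. It bounds the whole locus by $2g+2d-4$, using the same Hurwitz count as yours, applied to a pencil of degree $e\le d$ inside $D$ with the residual $d-e$ points free. It then bounds every fiber uniformly by Clifford's $h^0(\Omega^1_C(-D))\le g-d/2$. Large fibers (large $r$) occur only on small strata, so subtracting these two separate worst cases loses $d/2-1$ and gives only $g-3d/2+1$. You instead stratify by $r=h^0(\cO_C(D))-1$ and use the exact fiber dimension $g-d+r$. You then spend $r-1$ points of $D$ to cut the $g^r_d$ down to a pencil of degree $\le d-r+1$, so the base estimate drops to $2g+2d-r-3$ and exactly cancels the growth of the fiber. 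Clifford then does no real work: you use it only for finiteness of $r$, which is automatic from $r\le d$. In the language of the Brill--Noether subsection of \S\ref{sec:dimbound}, you are proving the universal bound $B(g,r,d)\le 2g+2d-r-3$ and inserting it into the proposition there. The paper's route is a few lines shorter. Yours yields codimension $\ge g-d$, which is optimal: for $g=3$, $d=2$, $C$ hyperelliptic and $D$ a fiber of the $g^1_2$, one has $\dim\Obs_{3,2}=6$ and fiber dimension $2$, so the codimension is exactly $1$. In particular, the bound $g-d+1$ asserted in Remark~\ref{improvedrange} cannot hold as stated. In the argument of \S\ref{sec:hprinciple}, your bound would let one take $N=g-1$ in place of $\lfloor 2g/3\rfloor$, enlarging the stable range by roughly $g/6$.
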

 \begin{proof}
  	We first observe that  $$\dim_\bbC {\rm Obs}_{g,d} = \dim_\bbC  ~ \{C \in \cM_g, D \in \Hilb_d(C) ~|~ h^0(\cO_C(D)) \geq 2 \} \leq 2d +2g - 4.$$  Indeed, since every such $C,D$ admits a subdivisor $E \subseteq D$ that arises from a map to $\bbP^1$, the dimension of this locus is bounded above by the following quantity, expressed in terms of the dimension of Hurwitz schemes of degree $\leq d$:   $$\max_{e \leq d}( \dim(\{C \in \cM_g, f: C \to \bbP^1,  \deg f = e\} - 2  + d -e  ).$$  (Here the term $d-e$ comes from the dimension of the space of divisors on $C$ containing $E$, and the $-2$ comes from reparameterizations of $\bbP^1$ fixing $0$).    Since the degree $d$ Hurwitz scheme has dimension $2d + 2g - 2$, the bound on $\dim_\bbC \Obs_{g,d}$ follows.
	
	Next, Clifford's theorem on special divisors implies that $h^0(\Omega_C^1(-D)) \leq g-d/2$.  So combining the bound on the dimension of the base and the fibers, we have that $$\dim_\bbC( \uGamma(C, \Omega_C^1(-D))|_{\Obs_{g,d}}) \leq 2d +2g - 4 + g-d/2.$$ On the other hand, the restriction of $\uGamma(C, \Omega_C^1(-D))$ to the complement of $\Obs_{g,d}$ has dimension $3g-3 + d + g-d$.   Subtracting these, we obtain a lower bound on the codimension of $\geometricBound$.  
 \end{proof}

 \subsection{Brill Noether bounds}
The bound in Proposition \ref{KeyGeometricBound} is not optimal.  Here we observe that universal bounds on the dimension of the space of $g_r^d$'s, can be used to improve   Proposition \ref{KeyGeometricBound}.  (This subsection is not used in the rest of the paper).
 
 \begin{defn} Let $d, g, r \in \bbN$.  We define $B(g, r ,d)$  to be $$ \dim_{\bbC} \{C \in \cM_{g},  D \in \Hilb^d(C),  h^0(C, \cO(D)) \geq r + 1  \}. $$   An \textbf{universal BN bound} is an upper bound on $B(g,r,d)$.
\end{defn}

\begin{prop}
	The codimension of $\uGamma(C, \Omega_C^1(-D))|_{\Obs_{g,d}}$ in $\uGamma(C, \Omega_C^1(-D))$  is  $ \geq  3g-3 + d   -  \underset{r = 1, \dots, d} \max(r + B(g,r,d)).$  
\end{prop}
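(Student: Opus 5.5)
We follow the proof of Proposition \ref{KeyGeometricBound}, stratifying the obstructed locus by the excess of sections instead of bounding it with a single estimate. For $(C,D) \in \Obs_{g,d}$, Riemann--Roch gives
\[
h^0(\Omega^1_C(-D)) = h^1(\cO_C(D)) = g - d - 1 + h^0(\cO_C(D)).
\]
So $(C,D)\in\Obs_{g,d}$ exactly when $h^0(\cO_C(D)) \geq 2$. We therefore decompose $\Obs_{g,d}$ into the locally closed pieces
\[
\Obs_{g,d}^r := \{(C,D) ~|~ h^0(\cO_C(D)) = r+1\}, \qquad r \geq 1.
\]
On $\Obs_{g,d}^r$ the fiber $\Gamma(C,\Omega^1_C(-D))$ has dimension exactly $g-d+r$. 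The first step is to check that only $1 \leq r \leq d$ occurs. This holds because $h^0(\cO_C(D)) \leq d+1$ for any effective divisor of degree $d$; in the relevant range Clifford's theorem gives even more.

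Next we bound the total space over each piece. By the definition of $B(g,r,d)$ we have $\dim \Obs^r_{g,d} \leq B(g,r,d)$. The restriction of $\uGamma(C,\Omega^1_C(-D))$ to $\Obs^r_{g,d}$ is a family whose fibers are vector spaces of dimension $g-d+r$. Hence its dimension is at most $B(g,r,d) + g - d + r$. The dimension of $\uGamma|_{\Obs_{g,d}}$ is the maximum over the finitely many strata, namely
\[
\max_{1 \leq r \leq d}\bigl(r + B(g,r,d)\bigr) + g - d .
\]

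Finally we compare with the dimension of the generic part, as in Proposition \ref{KeyGeometricBound}. Off $\Obs_{g,d}$ the base $\Hilb^d(\cC_g)$ has dimension $3g-3+d$ and the fibers have dimension $g-d$. So the total space has dimension $3g-3+d+g-d$. Subtracting the bound from the previous step gives codimension at least
\[
3g-3+d - \max_{r}\bigl(r + B(g,r,d)\bigr),
\]
which is the claim.

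The main point needing care is that the ambient $\uGamma(C,\Omega^1_C(-D))$ really has dimension $4g-3$, i.e.\ that the unobstructed part is nonempty and dense, so that "codimension" is measured against it. This is implicit in the proof of Proposition \ref{KeyGeometricBound}, and there it needs $d \leq g$. In general one could instead define the codimension relative to $4g-3$ directly. A second point is that the dimension count for a family of vector spaces over a stratified base holds stratum by stratum. This uses that $h^0$ is constant on each $\Obs^r_{g,d}$, so that the restriction over each stratum is a vector bundle, after passing to a finite étale cover by a scheme if needed.
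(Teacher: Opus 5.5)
Your proposal is correct and follows the paper's argument. Riemann--Roch gives fiber dimension $g-d+r$ over the locus where $h^0(\cO_C(D)) = r+1$, that locus has dimension at most $B(g,r,d)$, and subtracting $\max_r\bigl(r+g-d+B(g,r,d)\bigr)$ from $3g-3+d+g-d$ gives the bound. You have only made explicit the stratification by $r$ and the implicit assumption on the ambient dimension, which the paper's two-line proof leaves unstated.
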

\begin{proof}
It follows from the Riemann-Roch formula: $h^0(\Omega^1_C(-D)) = h^0(\cO(D)) - 1 + g -d $, that the codimension is bounded below by  $$\geq 3g - 3 + d + g-d -  \max_{r = 1, \dots, d}(r + g-d + B(g,r,d)).$$ Simplifying yields the proposition.
\end{proof}

\begin{remark}\label{improvedrange}
	Chen--Larson \cite{ChenLarson} give an argument that $B(g,r,d) \leq 2g + 2d - r - 4$ which implies the codimension of $\uGamma(C, \Omega_C^1(-D))|_{\Obs_{g,d}}$  is $\geq 3g-3 + d -(2g + 2d - 4) = g-d + 1$.  Substituting this improved bound into our argument in  \S \ref{sec:hprinciple} yields an improved range of $i \leq g/6 + \mainbound$. \end{remark}

\section{Proof of the $h$-principle} \label{sec:hprinciple}
	In this section, we prove Theorem \ref{comparison_theorem}.  First, using a simplicial resolution, we construct a spectral sequence converging to the compactly supported cohomology of $\cH_{g,\mu}$ in a range of degrees that grows as the number of simple zeros of $\mu$ grows.   Then we construct a similar resolution of a finite dimensional approximation of $\cH_{g,\mu}^{\top}$.  We show that the induced map between the spectral sequences associated to these two resolutions is an isomorphism in a range of degrees, from which we deduce Theorem \ref{comparison_theorem}.

\subsection{Resolving algebraic sections}  \label{sec:algConvergence}

Over $\fH_2(\cC_g)$ we have the universal stack $$\uGamma(C,\Omega^1_C(-D)) := \ker(\uGamma(C,\Omega^1_C) \to \uGamma(D, \Omega^1_C)),$$ whose fiber over $(C,D)$ is the space of sections of $\Omega^1_C$ vanishing at $D$.   This defines a continuous stratification of $\uGamma(C,\Omega^1_C)$ by the topological poset $\fH_2(\cC_g)$.    

To study the cohomology of the stratum $\cH_{g,\mu}$, we use the postack $W_{\mu^{>1}}/\fH_2(\cC_g)$. Note that there is a natural map $ \cH_{g,\mu} \to W_{\mu^{>1}}$ given by taking the divisor of a differential, and forgetting points of multiplicity one.     Given $U$ a (classical) open subset of a finite \'etale cover of $W_{\mu^{>1}}$ we will more generally consider the cohomology of the pullback $\cH_{g,\mu}|_U$ via the topological poset $U/\fH_2(\cC_g)$.  (In the proof of the main Theorem \ref{comparison_theorem} we only consider the case where $U$ is an open subset of a scheme).    Accordingly we define $Z|_U$ to be the pullback  of $\uGamma(C,\Omega^1_C(-D))$ along the map $$U/\fH_2(\cC_g) \to \fH_2(\cC_g) \qquad  (u, C_u, D_u\leq D') \mapsto (C_u, D').$$   So the fiber of $Z|_U$ over $(C_u, D_u \leq D') \in U/\fH_2(\cC_g)$ is the space of sections of $\Omega^1_C$ that vanish at $D'$.  

Write $E|_U$ for the pullback of $\uGamma(C,\Omega^1_C(-D))$ along the canonical map $U \to W_{\mu^{>1}} \subseteq \fH_2(\cC_g)$.   Thus $E|_U$ parameterizes tuples $(u \in U, s \in \Gamma(C_u, \Omega^1(-D_u))$, and $Z|_U \subseteq E|_U$ is a closed subset, defining a continuous stratification of $E|_U$ by $Z|_U$.   

We have that $\cH_{g,\mu}|_U \subseteq E|_U$ is the open subset obtained by removing the discriminant locus $\Disc_{Z|_U}= \bigcup_{(C_u,D_u < D') \in U/^\circ\fH_2(\cC_g) } Z_{C_u, D_u < D'}.$

Let $N = \lfloor 2 g/3 \rfloor$.  
It follows from Proposition \ref{KeyGeometricBound} that for every multiset $T$ such that $\cN_T \in U/\fH_2^{\leq N}$, we have $$\dim_{\bbC} E|_U - \dim_{\bbC} (Z|_U)|_{\cN_T} \geq \alpha(T).$$  Indeed, Proposition \ref{KeyGeometricBound} implies the codimension of the obstructed locus is positive, and the unobstructed locus has dimension $3g-3 + |T| + (g-1) - \deg(T)$, compared to $\dim_{\bbC} E|_U = 3g-3 + \#\{i ~|~ \mu(i) > 0\} + (g-1) - \deg(\mu^{> 1})$, where $r$ is the number of parts of $\mu^{>1}$.  Subtracting these gives that the codimension of the unobstructed locus is $\alpha(T)$.  

Therefore Theorem \ref{mainConvergence} implies that the Bar construction $\rB( U, \fH_2^{\leq N}, Z)$ approximates the compactly supported cohomology of the complement of $\cH_{g,\mu}$, as follows.

\begin{thm}\label{algebraicResolution}
 	Let $g \in \bbN$ and $\mu$ be an integer partition of $2g-2$. Let $e :=  2(3g-3 + \#\{i ~|~ \mu(i) > 0\} + (g-1) - \deg(\mu^{>1}))$.  Assume that $\lfloor 2 g/3 \rfloor \geq \deg(\mu^{>1}) + 2$.  Then $$H^i_c(\rB( U, \fH_2^{\leq \lfloor 2g/3 \rfloor}, Z)) \leftarrow H^i_c(\Disc_Z) $$ is connected in codimension $\lfloor \frac{\lfloor2g/3 \rfloor - \deg(\mu^{>1})}{2}\rfloor + 1$ relative to $e$.  
\end{thm}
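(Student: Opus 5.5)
The statement will follow directly from Theorem \ref{mainConvergence}, applied with $N = \lfloor 2g/3 \rfloor$, with $\mu^{>1}$ in place of $\mu$, with $X = E|_U$ and with the stratification $Z = Z|_U$. The hypothesis $N \geq \deg(\mu^{>1}) + 2$ is assumed. One checks directly that $e = 2\dim_\bbC E|_U = \dim_\bbR X$ on the unobstructed part, which is open and dense because Proposition \ref{KeyGeometricBound} gives the obstructed part positive codimension. So the only things to verify are the structural hypotheses on $Z$ and the codimension bound $\codim_\bbC Z|_{\cN_T} \geq \alpha(T)$ for every essential $T$ with $\deg T \leq N$.

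For the structural hypotheses: $Z$ is closed and semi-algebraic, since it is the pullback of a kernel of a map of vector bundles over a finite-type Hilbert scheme, restricted to a semi-algebraic open $U$. Monotonicity holds: if $D' \leq D''$, then sections vanishing on $D''$ vanish on $D'$. The intersection property \eqref{intersectionAssumption} holds because a section vanishes on both $D_1'$ and $D_2'$ if and only if it vanishes on their scheme-theoretic union $\max(D_1',D_2')$; on a smooth curve this is the divisor-wise maximum of multiplicities.

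For the codimension bound, split $(Z|_U)|_{\cN_T}$ into its part over $\Obs_{g,\deg T}$ and its complement. On the complement, $h^0(\Omega^1_C(-D')) = g - \deg T$, so the dimension is $3g-3+|T| + g - \deg T$ (the curve, plus $|T|$ points, plus the fiber). Comparing with $\dim E|_U$ and using that $T$ is essential, the difference is $\deg T - \deg\mu^{>1} - \#\{t : n_t = 0\}$, which is exactly $\alpha(T)$ up to the paper's sign convention. On the obstructed part, Proposition \ref{KeyGeometricBound} applies because $\deg T \leq N \leq g$. It gives codimension at least $g - 3\deg(T)/2 + 1$ inside $\uGamma$ over the full Hilbert scheme of degree $\deg T$ divisors. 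Since $\cN_T$ sits inside that Hilbert scheme with dimension deficit $\deg T - |T|$, the obstructed part has dimension at most $4g-3+\deg T - (g - 3\deg T/2 + 1) - (\deg T-|T|)$ roughly. We need this to be at most $\dim E|_U - \alpha(T)$.

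The main obstacle is this last inequality, i.e.\ checking that the choice $N = \lfloor 2g/3\rfloor$ makes the Clifford/Hurwitz bound beat $\alpha(T)$. After substitution, the inequality reduces to roughly $g - \tfrac32\deg T + 1 > 0$, which is exactly what $\deg T \leq \lfloor 2g/3\rfloor$ guarantees. I would carry out this bookkeeping carefully, tracking $|T| = \ell(\mu^{>1}) + c$ and $\deg T = \deg\mu^{>1} + b + 2c$ for an essential $T$ with $b$ pairs of the form $j \leq j+1$ and $c$ pairs of the form $0 \leq 2$. Once the bound holds, Theorem \ref{mainConvergence} yields connectivity in codimension $\lfloor (N - \deg\mu^{>1})/2\rfloor + 1$ relative to $e$, which is the claim.
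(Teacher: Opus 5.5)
Your route is the paper's. You apply Theorem \ref{mainConvergence} with $N=\lfloor 2g/3\rfloor$ and note that the unobstructed part of $Z|_{\cN_T}$ has dimension $4g-3+|T|-\deg T$, hence codimension exactly $\alpha(T)$ (reading $\alpha(T)=\sum_t(m_t-n_t)-\#\{t: n_t=0\}$). You then control the obstructed part by the Hurwitz/Clifford count behind Proposition \ref{KeyGeometricBound}. Your structural checks on $Z$ are fine.

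\textbf{The gap.} The step that fails as written is transferring Proposition \ref{KeyGeometricBound} to $\cN_T$. That proposition bounds the obstructed locus over all of $\Hilb^{d}(\cC_g)$, with $d=\deg T$. A codimension-$c$ subset need not meet a codimension-$(d-|T|)$ stratum in codimension $c+(d-|T|)$; a priori the obstructed locus could lie largely inside $\cN_T$. So the statement alone only gives $\dim\le 3g+\tfrac32 d-4$ for the obstructed part of $Z|_{\cN_T}$. Comparing this with $4g-3+|T|-d$ requires $\tfrac52 d-|T|\le g+1$, which fails in your range. For $\mu^{>1}=2^{\ell}$ and $T$ made of pairs $2\le 2$ and $0\le 2$ one has $|T|=d/2$, so you would need $2d\le g+1$, while $d$ runs up to about $2g/3$. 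Separately, the total dimension of $\uGamma(C,\Omega^1_C(-D))$ over $\Hilb^d(\cC_g)$ is $4g-3$, not $4g-3+\deg T$. With your number, even your heuristic reduces to $\tfrac52 d\le g+1$ rather than $g-\tfrac32 d+1>0$.

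\textbf{The fix.} You need a stratum-wise version, obtained by rerunning the proof of Proposition \ref{KeyGeometricBound} inside $\cN_T$.
\begin{enumerate}
\item Suppose $(C,D_u<D')\in\cN_T$ has $h^0(\cO_C(D'))\ge 2$. A pencil through $D'$ with its base points removed gives $E\le D'$. This $E$ is a fiber of some $f:C\to\bbP^1$ of degree $e\le d$, supported on $s\le|T|$ of the points of $D'$.
\item By Riemann--Hurwitz, $f$ has at most $2g-2+e+s$ further branch points. So such pairs $(C,E)$ form a family of dimension at most $2g-4+e+s$: collisions among the points of $E$ force ramification and cost Hurwitz dimension.
\item The remaining $|T|-s$ points of $D'$ add at most $|T|-s$. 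Hence the part of $\cN_T$ over $\Obs_{g,d}$ has dimension at most $2g-4+d+|T|$, i.e.\ codimension at least $g+1-d$ in $\cN_T$.
\item Clifford gives a fiber excess of at most $d/2$. So the obstructed part of $Z|_{\cN_T}$ has dimension at most $3g-4+|T|+d/2$.
\item This is at most $4g-3+|T|-d$ exactly when $g-\tfrac32 d+1\ge 0$, which $d\le\lfloor 2g/3\rfloor$ guarantees.
\end{enumerate}
The same count over $W_{\mu^{>1}}$ itself is what actually justifies $\dim_\bbC E|_U=2g-1+\ell(\mu)$. Your appeal to Proposition \ref{KeyGeometricBound} for this has the same transfer problem.

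Finally, the printed formula for $e$ is not literally twice this; it gives $2(2g-2+\ell(\mu)+m_1(\mu))$. So your ``one checks directly'' only holds after taking $e:=2(2g-1+\ell(\mu))$.
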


By Theorem \ref{spectralSequence} we obtain a spectral sequence converging to the homology of spaces of algebraic sections in a range. 

\begin{cor}\label{algebraicSectionSpectralSequence}
	 Let $\kappa = \lfloor \frac{\lfloor2g/3 \rfloor - \deg(\mu^{>1})}{2}\rfloor$. Let $e :=  2(3g-3 + \#\{i ~|~ \mu(i) > 0\} + (g-1) - \deg(\mu^{>1}))$. There is a spectral sequence that converges to the compactly supported cohomology of the complement of $\cH_{g,\mu}|_U$ in $\uGamma(C, \Omega^1_C(-D))|_U$ in degrees $\geq e - \kappa$, with total $\bE_1$ page taking the following  form in degrees $\geq e - \kappa$: $$\bigoplus_{p + q \geq e - \kappa } \bE_1^{p,q} = \bigoplus_{T \text{ essential, }\cN_{T} \subseteq (W_{\mu^{>1}}/^\circ \fH_2^{\leq N})} ~ \bigoplus_{i \geq e- \kappa } H^{i + e - 2 \deg(T)}_c(\cN_T|_U).$$ 
\end{cor}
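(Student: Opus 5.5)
The corollary follows by combining Theorem \ref{algebraicResolution} with the spectral sequence of Theorem \ref{spectralSequence}, applied to the stratification $Z|_U \subseteq E|_U$ by $U/\fH_2^{\leq N}(\cC_g)$ with $N = \lfloor 2g/3 \rfloor$. The remaining work is to identify each $E_1$ term concretely.

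The first step is convergence. By Theorem \ref{algebraicResolution}, the map $H^i_c(\Disc_Z) \to H^i_c(|\rB(U,\fH_2^{\leq N},Z)|)$ is an isomorphism for $i > e-\kappa-1$ and a surjection at $i = e-\kappa-1$. Here we view the realization as a locally compact space and $\Disc_Z \subseteq E|_U$ as the complement of $\cH_{g,\mu}|_U$. So in degrees $\geq e-\kappa$, the compactly supported cohomology of the complement agrees with that of the bar construction. Theorem \ref{spectralSequence} supplies a spectral sequence converging to the latter. Its total $E_1$ page is $\bigoplus_{T} H^*_c(Z|_{\cN_T\times_{W_{\mu^{>1}}} U/\fH_2})[-r(T)+1]$, summed over essential $T$ with $\deg(T)\le N$ and $\cN_T$ lying in the strict poset. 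Truncating to total degree $\geq e-\kappa$ gives the claimed range. One also needs the intersection property \eqref{intersectionAssumption} for Theorem \ref{mainConvergence} to apply. This holds because sections vanishing on $D_1'$ and on $D_2'$ are exactly the sections vanishing on the scheme-theoretic union $\max(D_1',D_2')$.

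The second step rewrites each term $H^*_c(Z|_{\cN_T})$ as a shift of $H^*_c(\cN_T|_U)$. Over $\cN_T|_U$, the space $Z$ is the family of vector spaces $\Gamma(C,\Omega^1_C(-D'))$. If the fiber dimension is constant, equal to $g-1+\ldots$, namely $g - \deg(T)$ in the unobstructed case, then $Z|_{\cN_T}$ is a complex vector bundle of real rank $2(g-\deg T)$. The Thom isomorphism for compactly supported cohomology then gives $H^j_c(Z|_{\cN_T}) \cong H^{j-2(g-\deg T)}_c(\cN_T|_U)$. Integrally this needs an orientation, which a complex bundle has.

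Combining this with the shift $r(T)-1$ and the normalization of $e$ should produce the stated indexing $H^{i+e-2\deg(T)}_c(\cN_T|_U)$, up to the bookkeeping of $r(T)=b+c$ and of the constant $2(g-1)$ absorbed into $e$. I would carry out this bookkeeping carefully, since the displayed formula appears to absorb the $r(T)$ shift into the grading convention.

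The main obstacle is the obstructed locus, where $h^0(\Omega^1_C(-D'))$ jumps and $Z|_{\cN_T}$ is not a vector bundle. I would stratify $\cN_T|_U$ into the unobstructed open part and $\Obs$, and use the long exact sequence for compactly supported cohomology. The obstructed part of $Z|_{\cN_T}$ has complex codimension at least $\geometricBound$ inside $\uGamma$ by Proposition \ref{KeyGeometricBound}, with $d = \deg T \le 2g/3$. So its compactly supported cohomology vanishes in degrees above $e - 2(\alpha(T) + g - 3d/2+1)$ or so. I would check that this lies below $e-\kappa$ because $g - 3\deg(T)/2 + 1 \geq 1$ and $\alpha(T)\ge \kappa$-type bounds hold. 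Hence, in the range $\geq e-\kappa$, only the unobstructed vector-bundle part contributes. Replacing $\cN_T|_U$ by its unobstructed part, or noting that the obstructed part of the base also has high codimension, then gives the displayed $E_1$ page.
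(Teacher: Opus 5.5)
Your first step is the paper's entire proof: the corollary is obtained there by feeding the convergence statement of Theorem \ref{algebraicResolution} into Theorem \ref{spectralSequence}. Your check of \eqref{intersectionAssumption} is also correct: vanishing on $D_1'$ and on $D_2'$ is the same as vanishing on $\max(D_1',D_2')$. The paper does not separately justify replacing $H^*_c(Z|_{\cN_T})$ by a shift of $H^*_c(\cN_T|_U)$ over the obstructed locus. Your attempt to do so has the right shape, but two steps fail as written.

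\textbf{The codimension bound.} Proposition \ref{KeyGeometricBound} is a statement inside $\uGamma(C,\Omega^1_C(-D))$ over \emph{all} of $\Hilb^d(\cC_g)$, a space of dimension $4g-3$ regardless of $T$. So it does not give the $\alpha(T)$ you add. Since $\dim_\bbC E|_U=4g-3+\ell(\mu^{>1})-\deg(\mu^{>1})$, it only yields $\codim_\bbC(Z|_{\cN_T\cap\Obs}\subseteq E|_U)\ge g+1-\tfrac32 d+\ell(\mu^{>1})-\deg(\mu^{>1})$, where $d=\deg T$. Take $\mu=1^{2g-2}$, $3\mid g$, and $T$ consisting of $\kappa=g/3$ copies of $0\le 2$. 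Then this bound is $1$, whereas you need codimension $>\kappa$.

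What you need is the stratum-relative version. Let $S_T$ be the $(3g-3+|T|)$-dimensional locus of divisors of the type of $D'$; the bound required is $\dim(\Obs_{g,d}\cap S_T)\le 2g-4+d+|T|$. This follows from the same Hurwitz count once multiplicities are prescribed. Degree-$e$ covers with a marked fiber $E\le D'$ supported at $s$ points form a family of dimension $\le 2g-4+e+s$, because that fiber absorbs $e-s$ of the ramification. The rest of $D'$ contributes at most $|T|-s$. Combined with Clifford, this gives codimension $\ge g-\tfrac32 d+1$ relative to the expected dimension of $Z|_{\cN_T}$, i.e.\ $\ge\alpha(T)+g-\tfrac32 d+1$ in $E|_U$. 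The paper's derivation of $\codim Z|_{\cN_T}\ge\alpha(T)$ before Theorem \ref{algebraicResolution} tacitly uses the same strengthening.

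\textbf{The final inequality.} Your check cannot rest on ``$\alpha(T)\ge\kappa$'', which is false: $\alpha(T)=b+c$ can be $1$. You need $H^*_c(Z|_{\cN_T\cap\Obs})[-r(T)+1]$, and its neighbour in the long exact sequence, to vanish in total degrees $\ge e-\kappa$. For this it suffices that $2\codim_\bbC>\kappa+r(T)$. With $r(T)=\alpha(T)=b+c$ and $d=\deg(\mu^{>1})+b+2c$, this reads
\[
2g+2-3\deg(\mu^{>1})-2b-5c>\kappa .
\]
It holds because $b+2c\le N-\deg(\mu^{>1})$ and $2\kappa\le N-\deg(\mu^{>1})$ bound the left side minus $\kappa$ below by $2g+2-3N\ge 2$. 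Small $T$ is covered by the term $g-\tfrac32 d$ and large $T$ by $\alpha(T)$; this is where $N\le 2g/3$ enters.

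Obstructed fibers have dimension $>g-d$, so the same estimate controls the obstructed part of the base, and you may keep all of $\cN_T|_U$. Finally, do not try to match the displayed index literally: for $i\ge e-\kappa$ the degree $i+e-2\deg(T)$ exceeds $\dim_\bbR\cN_T|_U$. On the unobstructed part, the Thom isomorphism gives $H^{\,i-r(T)+1-2(g-\deg T)}_c(\cN_T|_U)$.
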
 
Recall that each $\cN_T$ is isomorphic to a colored configuration space of $\cC_g$, so that each term $H^*_c(\cN_T)$ may be computed in terms of the cohomology of $\cM_{g,n}$ with its symmetric group action.   We will not attempt to compute the differentials of this spectral sequence directly; instead we will compare it with a spectral sequence related to the homology of $\cH_{g,\mu}^{\rm top}$, constructed below.  

\begin{remark}
	Using an augmented version of the Bar construction, we may obtain a spectral sequence converging directly to the cohomology of $\cH_{g,\mu}|_U$.  Since the cohomology of this subset and its closed complement differ by a long exact sequence, Corollary \ref{algebraicSectionSpectralSequence} suffices for our purposes.  
\end{remark}

\subsection{Finite dimensional approximations} The space $\cH_{g,\mu}^{\top}$ is an infinite dimensional fiber bundle over $W_{\mu^{>1}}$.  Since the results of Section \ref{sec:poset} only apply to compactly supported cohomology, we will use finite-dimensional approximations to relate the homology groups of $\cH_{g,\mu}^{\top}$ to compactly supported cohomology groups.   We follow the treatment in \cite{DasTost}[\S6.3] and \cite{Aumonier}[\S 7], via ideas that can be traced to \cite{Mostovoy}.  

Let $B$ be a smooth algebraic variety.  Let $p:\cC \to B$ be a smooth projective family of algebraic curves, and let $\cE \to \cC$ be an algebraic vector bundle.  We let $\uGamma^\top(\cC, \cE) \to B$ be the space of  relative continuous sections:  a point of  $\uGamma^\top(\cC, \cE)$ is determined by the data of $b \in B$ and a continuous section $s \in \Gamma^\top(p\inv(b),\cE|_{p\inv(b)})$, and $\uGamma^\top(\cC, \cE)$ is topologized as a bundle of Banach spaces.  (For any contractible open $V \subseteq B$, we may trivialize $\cE |_V \to \cC|_V$ to trivialize $\uGamma^\top(\cC, \cE)|_V$).  

\begin{prop}\label{fdApprox}
	There exists a sequence of finite dimensional semi-algebraic vector bundles $
	\cF^{(k)} \subseteq \uGamma^\top(\cC, \cE)$  over $B$ such that 
	\begin{enumerate}
		\item $\cF^{(k)} \subseteq \cF^{(k+1)}$ and $\cF^{(k)} \supseteq \uGamma^{\rm alg}(\cC, \cE)$ for all $k$,
		\item For any open subset $G \subseteq \uGamma^\top(\cC, \cE)$ we have that $\colim_{k} H_i(\cF^{(k)} \cap G) \iso H_i(G)$ for all $i \in \bbN$.  
	\end{enumerate}
\end{prop}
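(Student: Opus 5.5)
We construct $\cF^{(k)}$ from finite-dimensional spaces of smooth sections that are algebraic along the base and rich enough to approximate any continuous section uniformly on the fibers. The approach follows \cite{DasTost}[\S6.3] and \cite{Aumonier}[\S7].

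First reduce to the case where $B$ is covered by finitely many contractible semi-algebraic opens. Choose an ample line bundle $L$ on $\cC$ relative to $B$. After shrinking or stratifying $B$ (or working on a finite semi-algebraic cover and gluing with a partition of unity), we may assume that $p_*(\cE \otimes L^{m})$ and $p_*(L^{m'})$ are vector bundles for large $m,m'$. We then take $\cF^{(k)}$ to be the span of products $s \cdot \overline{t}/h$: here $s$ ranges over relative algebraic sections of $\cE\otimes L^{m_k}$, $t$ over relative algebraic sections of $L^{m_k}$, and $h$ is a fixed semi-algebraic Hermitian metric on $L$ used to identify $\overline{L^{m_k}} \otimes L^{m_k}$ with the trivial bundle. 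We choose $m_k$ increasing in such a way that $\cF^{(k)}\subseteq\cF^{(k+1)}$. One way to arrange this is to replace $\cF^{(k)}$ by the sum $\cF^{(1)}+\dots+\cF^{(k)}$, which is still a finite-dimensional semi-algebraic subbundle after passing to a stratification on which the ranks are constant. We also add $\uGamma^{\rm alg}(\cC,\cE)$ to $\cF^{(k)}$. This gives condition (1).

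For condition (2), the key analytic input is a density statement: on each fiber, $\bigcup_k \cF^{(k)}_b$ is dense in $\Gamma^\top(p\inv(b), \cE)$ in the $C^0$ topology, locally uniformly in $b$. We prove this by the Stone--Weierstrass theorem. The functions $t\overline{t'}/h^{m}$ form a self-adjoint algebra under multiplication (sums of the $m$) that separates points, since $L^m$ is very ample. Multiplying this algebra by finitely many sections of $\cE\otimes L^{m_0}$ that generate $\cE$ pointwise (via a partition of unity) then gives density in continuous sections of $\cE$.

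Given density, we deduce the homology statement by the standard compactness argument. Any singular cycle in the open set $G$ has compact image $K$. Using a local trivialization over contractible pieces of $B$ and a finite-dimensional projection, we homotope the cycle within $G$ into some $\cF^{(k)}\cap G$. Concretely, we fix a continuous fiberwise map $\rho_k$ approximating the identity: choose finitely many elements of $\cF^{(k)}$ that $\epsilon$-approximate the compact set $K$, and average them with a partition of unity on $K$. For small $\epsilon$, the straight-line homotopy from the identity to $\rho_k$ stays in $G$, since $G$ is open and $K$ is compact. This shows surjectivity of $\colim_k H_i(\cF^{(k)}\cap G)\to H_i(G)$. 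For injectivity, we apply the same argument to a chain bounding the cycle in $G$, relative to the part already lying in $\cF^{(k)}$; the homotopy can be chosen to fix elements of $\cF^{(k)}$ up to moving into $\cF^{(k')}$, so the bounding chain lands in some $\cF^{(k')}\cap G$.

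The main obstacle is making this approximation uniform over the noncompact base $B$ while keeping everything a semi-algebraic bundle. We will handle this by working on compact subsets: every compact subset of $G$ lies over a compact subset of $B$. The partition-of-unity approximation only needs to be carried out over that compact set, where finitely many trivializations suffice. Semi-algebraicity is ensured by using algebraic sections and a semi-algebraic metric.
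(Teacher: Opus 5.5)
\paragraph{A gap in part (1).}
Your construction does not produce vector bundles over $B$.

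With an arbitrary semi-algebraic Hermitian metric $h$ on $L$, the spans of the sections $s\bar t/h^{m}$ are not nested for any choice of $m_k$. They also need not contain $\uGamma^{\rm alg}(\cC,\cE)$. The reason is that putting $s\bar t/h^m$ into the level-$m'$ span requires $h^{m'-m}s\bar t$ to be a finite sum $\sum_j s_j\bar t_j$ of algebraic times anti-algebraic sections, and this fails for a general metric.

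Your fallback is to replace $\cF^{(k)}$ by $\cF^{(1)}+\dots+\cF^{(k)}+\uGamma^{\rm alg}(\cC,\cE)$ and stratify $B$ so the rank is constant. This gives up exactly what is required. The fiber dimension of a sum of subbundles is only lower semicontinuous, so the result is not a vector bundle over $B$. Subbundles defined on different opens also cannot be glued with a partition of unity.

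This matters downstream. The proof of Theorem~\ref{comparison_theorem} uses that $\cH^{\rm top}_{g,\mu}|_U\cap\cF^{(k)}$ is an oriented manifold of the single dimension $e^{(k)}$. In that application $\cE=J^1\Omega^1(-D)$, whose holomorphic sections jump in dimension along the obstructed locus $\Obs_{g,d}$. So adding $\uGamma^{\rm alg}$ as a summand would genuinely destroy constant rank. Separately, you never justify that even a single span has constant rank.

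\paragraph{The missing idea.}
The paper uses a specific metric. Embed $\cC\subseteq\bbP^r_B$, take $L=\cO(1)|_{\cC}$, and set $h=\sum_i z_i\bar z_i$ (this is the paper's $\eta$).
\begin{itemize}
\item Nestedness is automatic, since $s\bar t/h^m=\sum_i (sz_i)\overline{(tz_i)}/h^{m+1}$.
\item Algebraic sections already lie in every level, since $s=\sum_{|I|=m}\binom{m}{I}(sz^I)\overline{z^I}/h^m$. So no sums are needed.
\item Constant rank follows from injectivity of $\uGamma^{\hol}(\cE\otimes L^{m})\otimes\uGamma^{\antihol}(\overline{L}^{m})\to\uGamma^{\rm top}(\cE)$ (\cite[Lemma~6.17]{DasTost}). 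The point is that $\sum_j s_j(x)\overline{t_j(y)}$ is holomorphic in $(x,\bar y)$ and vanishes on the totally real diagonal, hence vanishes identically.
\item Relative Serre vanishing for $m\gg0$ makes both factors vector bundles, with no shrinking or stratifying of $B$.
\end{itemize}
With this choice, constants also lie in your Stone--Weierstrass algebra, and your density argument goes through.

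\paragraph{Your homology step.}
This part is a valid alternative to the paper's argument. The paper invokes \cite[Proposition~6.16]{DasTost} over trivializing opens and glues by a hypercover or Mayer--Vietoris. You instead argue globally by compactness, building a fiberwise approximation $\rho$ from local sections of $\cF^{(k')}$. For a cycle $z$ in $\cF^{(k)}\cap G$, the straight-line homotopy from $z$ to $\rho\circ z$ stays in the linear fibers of $\cF^{(k')}$ and in $G$, which gives injectivity.

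Your route is more self-contained. It yields only the homology statement, but that is all the proposition claims; the paper's route gives a weak equivalence of homotopy colimits.
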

\begin{proof}
	Embed $\cC$ into $\bbP^r_B$ and let $M$ be the restriction of $\cO(1)$.  We take $\cF^{(k)}$ to be the image of the following composite \[\phi_k: \uGamma^\hol(\cE \otimes M^{\otimes k}) \otimes \uGamma^{\antihol}(\overline{M}^{\otimes k})   \to  \uGamma^\top(\cE \otimes  M^{\otimes k} \otimes \overline M^{\otimes k}) \to^{(\eta\inv)^{\otimes k}} \uGamma^\top(\cE)\]
Here \(\Gamma^{\hol},\Gamma^{\antihol}\)  respectively denote holomorphic and anti-holomorphic sections, and $\overline M$ is the anti-holomorphic line bundle obtained by complex conjugating the transition maps of $M$.   The first map includes (anti-)holomorphic sections into continuous sections and multiplies them pointwise.   The second map uses the inverse to the semi-algebraic trivialization $\eta: \bbC \times \cC \to M \otimes \overline M$ defined by $\eta(1) = \sum_{i = 0}^r z_i \otimes \overline z_i $, where $z_i \in \Gamma(\cC, M)$ are restrictions of the homogeneous coordinates of $\bbP^r$. 

By \cite[Lemma~6.17]{DasTost}, the map $\phi_k$ is injective.   Because $M$ is ample, $\uGamma^\hol(\cE \otimes M^{\otimes k})$ is a vector bundle for $k$ sufficiently large.  Therefore, reindexing if necessary, we may guarantee that every $\cF^{(k)}$ is a subbundle.   As in the discussion immediately after \cite[Lemma~6.17]{DasTost}, we have that $\cF^{(k)} \subseteq \cF^{(k+1)}$, $\cF^{(k)}$ contains all of the holomorphic (hence algebraic) relative sections of $\cE$,  and $\cF^{(k)}$ is semialgebraic, establishing (1).  

It follows from  the Stone--Weierstrass theorem, as applied in the proof of \cite[Lemma~7.2]{Aumonier}, that for every $b \in B$, the union $\bigcup_{k} (\cF^{(k)})|_b$ is dense in $\Gamma^{\top}(\cC_b, \cE_b)$.   For any open subset $V \subseteq B$ on which $\cC|_U, \cE|_V$ may be continuously trivialized, we have a trivialization of $\uGamma(\cC,\cE)|_V$.   Therefore by \cite[Proposition 6.16]{DasTost} ${\rm hocolim}_{k} (\cF^{(k)} \cap G|_V) \simeq G|_V$. (Note that in \cite[Proposition 6.16]{DasTost}  $B$ is used to denote a Banach space fiber, not the base).   Using a hypercovering of $B$ by contractible $V$, it follows that ${\rm hocolim}_{k} (\cF^{(k)} \cap G)$ is weakly equivalent to $G$ and passing to homology, we obtain (2).  (For the statement on homology only, it suffices to use a Mayer-Vietoris spectral sequence instead).  
\end{proof}

\subsection{Resolving continuous sections} \label{sec:topRes} For spaces of continuous sections, we build a similar resolution to the case of algebraic sections.  


Over $W_{\mu^{> 1}} \times_{\cM_g} \cC_g$, we have a family of vector bundles whose fiber above $(C,D)$ is the bundle $J^1(\Omega^1(-D))$ on $C$.  Let $E^{\rm top} := \uGamma^{\rm top}(C, J^1(\Omega^1(-D))) \to W_\mu$ be the space of relative continuous sections.   

 \begin{defn} 
  Given $(C, D \leq D')$, we let $Z^{\top}_{C, D \leq D'} \subseteq E^\top_{C,D}$ be the subspace of sections $(s_0,s_1) \in \uGamma^{\rm top}(C, J^1(\Omega^1(-D)))$ such that: 
 \begin{enumerate}
	\item if $D'-D$ has multiplicity $1$ at $q \in C$, then $s_0(q) = 0$, and
 	\item if $D' - D$ has multiplicity $> 1$ at $q \in C$, then $s_0(q) = s_1(q) = 0$.
 \end{enumerate}
 \end{defn}
 
 \begin{prop}
 	$Z^{\top} \subseteq E^{\top}$ is closed, hence defines a continuous stratification of $E^{\top}$ by $W_{\mu^{> 1}}/\fH_2(\cC_g)$.
 \end{prop}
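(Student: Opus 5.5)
We need to show two things: that $Z^{\top}$ is closed in $E^{\top}\times_{W_{\mu^{>1}}} W_{\mu^{>1}}/\fH_2(\cC_g)$, and the monotonicity condition, namely that $D'_1\leq D'_2$ implies $Z^\top_{C,D\leq D'_2}\subseteq Z^\top_{C,D\leq D'_1}$.

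\emph{Monotonicity.} This is pointwise. If $D'_1\leq D'_2$, then at each $q$ the multiplicity of $D'_1-D$ is at most that of $D'_2-D$. So the vanishing conditions imposed at $q$ by $D'_1$ are a subset of those imposed by $D'_2$:
\begin{itemize}
\item $\{s_0(q)=0\}$ when the multiplicity is $1$;
\item $\{s_0(q)=s_1(q)=0\}$ when it is $\geq 2$;
\item no condition when it is $0$.
\end{itemize}

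\emph{Closedness.} Work locally over a contractible open $V$ of the base (a finite \'etale cover of $W_{\mu^{>1}}$, or $U$), on which $\cC|_V$ and the bundle $J^1(\Omega^1(-D))$ are continuously trivialized. Then $E^\top|_V\iso V\times \Gamma^\top(C_0,\bbC^2)$ with the Banach (sup norm) topology.

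Take a convergent sequence $(v_n, D_n\leq D'_n, s_n)\to (v, D\leq D', s)$ with $s_n\in Z^\top_{v_n,D_n\leq D'_n}$. Write $D'_n-D_n=\sum_j a_{n,j}q_{n,j}$. Since the Hilbert scheme topology is that of symmetric products, after passing to a subsequence we may assume the following.
\begin{itemize}
\item The number of distinct points of $D'_n-D_n$ and their multiplicities are constant.
\item Each $q_{n,j}\to q_j$, where the $q_j$ need not be distinct.
\item $D'-D=\sum_j a_j q_j$, with points colliding and multiplicities adding. This uses that subtraction $D'-D$ is continuous on the relation $\{D\leq D'\}$, since locally it is the quotient of symmetric products.
\end{itemize}

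Uniform convergence $s_n\to s$, together with continuity of $s$ and $q_{n,j}\to q_j$, gives $s_n(q_{n,j})\to s(q_j)$. So every vanishing condition satisfied at $q_{n,j}$ passes to the limit at $q_j$.

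If the multiplicity of $D'-D$ at a point $q$ is $\geq 2$, there are two cases:
\begin{itemize}
\item Some $a_j\geq 2$ with $q_j=q$. Then $s_0(q)=s_1(q)=0$ directly from the limit.
\item At least two indices $j\neq j'$ with $q_j=q_{j'}=q$, each giving only $s_0=0$. Then the limit yields only $s_0(q)=0$, and this is where the argument is delicate.
\end{itemize}

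In the second case, the condition $s_1(q)=0$ must come from the jet structure. On the locus of $C^1$ sections $s$ whose jet is $(s_0,ds_0)$, two nearby zeros of $s_0$ force $ds_0$ to vanish in the limit by Rolle/mean value. A general continuous section of $J^1$ has $s_1$ unrelated to $s_0$, so this collision limit fails in general.

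The fix is to read the definition so that collision of simple conditions is allowed to produce only the weaker condition. Concretely, the honest closure statement is that the set defined by condition (2) should be taken with multiplicity counted on $D'$ itself, consistent with the fact that the relevant strata only use essential elements, $0\leq 2$ or $j\leq j+1$. Alternatively, we can check that for $D'-D$ supported with multiplicities $\geq 2$ off $D$, the coincidence $q_j=q_{j'}$ only arises with each $a\geq 2$. Indeed, points of $D'$ outside the support of $D$ have multiplicity $\geq 2$ since $D'\in\fH_2$. At points of $D$ with $\mu(i)\geq 2$, a multiplicity-one increase happens only at a single point.

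\emph{Main obstacle.} The remaining gap is the collision of points of $D'-D$ with the points of $D$. I would handle it using that $D$ varies in the stratum $W_{\mu^{>1}}$ with distinct points, and that $D_n\to D$ inside this stratum. Consequently the multiplicities of $D'_n$ at points approaching a point $p$ of $D$ sum correctly, and every point of $D'-D$ not on $D$ arises as a limit of points of multiplicity $\geq2$. Verifying this combinatorial bookkeeping under limits is the step I expect to need the most care.
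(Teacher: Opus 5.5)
Your argument is correct in substance. It rests on the same key observation as the paper: since $D'\in\fH_2$, every point where $D'-D$ has multiplicity one is a point of $D$. Points of $D$ stay distinct in the limit, because the base is the stratum $W_{\mu^{>1}}$. So at most one simple condition can converge to a given point. Consequently, a limit point where $D'-D$ has multiplicity $\geq 2$ always inherits $s_0=s_1=0$ from some approximating point with $a_j\geq 2$.

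This already settles what you call the main obstacle. A point of $D'_n-D_n$ off $D_n$ has multiplicity $\geq 2$. If it runs into a point $p$ of $D$, it forces $s(p)=0$ in the limit, which is your first case. The unique point of $D_n$ converging to $p$ is the only possible source of a simple condition near $p$, so nothing further needs checking.

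Some clean-up is needed:
\begin{itemize}
\item Delete the suggestion to reread the definition so that collisions yield only the weaker condition. That would change the statement, and the situation never arises.
\item Drop the Rolle remark; it is beside the point.
\item ``Coincidence only arises with each $a\geq 2$'' is too strong, since the simple point on $D$ can collide with a point of multiplicity $\geq 2$. What is true, and what you actually use, is that at most one colliding point has $a_j=1$.
\end{itemize}

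Your route differs from the paper's. The paper asserts that $D\le D'\mapsto((D'-D)^{=1},(D'-D)^{>1})$ is continuous into $\bigsqcup_r\mathrm{UConf}_r(\cC_g)\times_{\cM_g}\mathrm{Hilb}(\cC_g)$. It then writes conditions (1) and (2) as preimages of closed sets (symmetric powers of $T^1\Omega^1(-D)$, resp.\ of the zero section) under evaluation.

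Your sequential argument is more hands-on but also more robust. The splitting map is not actually continuous in exactly the configuration covered by your first case. For example, take $D'_n=D+p+2q_n$ with $q_n\to p\in\mathrm{supp}\,D$. Then $(D'_n-D)^{=1}=p$, but $(D'-D)^{=1}=0$ and $(D'-D)^{>1}=3p$. Closedness nevertheless holds, because the full vanishing required at $p$ is inherited from the $q_n$.

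The paper's phrasing, as pullbacks along evaluation maps, has its own advantage. It is closer to what is needed later, when $Z^{(k)}=Z^{\mathrm{top}}\cap\cF^{(k)}$ must be semi-algebraic for the convergence criterion of \S2. Your argument gives closedness only, which is all the statement asks. You also verify the monotonicity $Z_{D\le D_2'}\subseteq Z_{D\le D_1'}$ explicitly, which the paper leaves implicit.
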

 \begin{proof}
 	First we observe that the assignment $D \leq D' \mapsto ((D'-D)^{= 1} , (D'-D)^{>1})$, extracting the multiplicity $1$ and multiplicity $>1$ part of $D'-D$  defines a continuous map $$W_{\mu^{> 1}}/\fH_2(\cC_g) \to \left(\bigsqcup_{r \leq \ell(\mu^{>1})}   {\rm UConf}_r(\cC_g) \right)\times_{\cM_g} \Hilb(\cC_g).$$ Indeed if $D'-D$ has multiplicity $1$ at a point $p$ of $C$, then because $D' \in \fH_2(C)$ has multiplicity $\geq 2$ at every point, we have that $\mult_p(D) \leq \mult_p(D')$ takes the form $k \leq k+1$ for some $k \in \bbN$, $k > 0$.   Since no points such that $\mult_p(D) > 1$ can collide without leaving $W_{\mu^{> 1}}$,  the map on $W_{\mu^{> 1}}/\fH_2(\cC_g)$  that extracts the points of $D$ and the multiplicities of $D,D'$ at these points is continuous.  

	In general, if $K \subseteq Y$ is a closed subspace of a fiber bundle over $B$, then the subspace $$\{s \in \Gamma(B,Y), T \in \Sym^l Y ~|~ s(T) \in \Sym^l(K)\}$$ is closed since it is the preimage of $\Sym^n K \subseteq \Sym^n Y$ under the evaluation map.  Applied to $0 \times C \subseteq J^1\Omega^1(-D)$, we obtain that condition (2) is closed, and applied to $T^1 \Omega^1(-D) \subseteq J^1\Omega^1(-D)$, we obtain that condition (1) is closed. \end{proof}

We now let $U \to W_{\mu^{>1}}$ be a classical open subset of a finite \'etale cover of by a scheme.  Pulling back, we have a continuous stratification of $E^{\top}|_U$, given by $Z^{\top}|_U$.   We apply Proposition \ref{fdApprox}, to obtain a chain of semialgebraic vector subbundles $\cF^{(k)} \subseteq E^\top|_U$, containing the subspace of holomorphic sections $\uGamma^{\rm hol}(C,J^1\Omega^1(-D))$, and such that $\cF^{(k)} \cap G$ approximates the homology of any open subset $G \subseteq E^\top|_U$.  We let $Z^{(k)} := Z^{\top}|_U \cap \cF^{(k)}$, be the associated continuous stratification of $\cF^{(k)}$ by $U/\fH_2(\cC_g)$.   (Note that for convenience, we are omitting $U$ from our notation for $Z^{(k)}$).

Again, we take $N = \lfloor 2g/3 \rfloor$ and pass to the Bar construction $\rB(U, \fH_2^{\leq N}, Z^{(k)})$.   In parallel to Theorem \ref{algebraicResolution}, we have the following.

\begin{thm}\label{topologicalResolution}
 	Let $k \in \bbN$.  Let $g \in \bbN$ and $\mu$ be an integer partition of $2g-2$. Let $e^{(k)} :=  2(3g-3 + \#\{i ~|~ \mu(i) > 0\} + \rank_{\bbC} \cF^{(k)} - \deg(\mu^{>1}))$.  Assume that $\lfloor 2 g/3 \rfloor \geq \deg(\mu^{>1}) + 2$.  Then $$H^i_c(\rB( U, \fH_2^{\leq \lfloor 2g/3 \rfloor}, Z^{(k)})) \leftarrow H^i_c(\Disc_{Z^{(k)}}) $$ is connected in codimension $\lfloor \frac{\lfloor2g/3 \rfloor - \deg(\mu^{>1})}{2}\rfloor + 1$ relative to $e^{(k)}$.  
\end{thm}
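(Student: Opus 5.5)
The plan is to deduce the statement from Theorem \ref{mainConvergence}, applied with $X = \cF^{(k)}$, $Z = Z^{(k)}$, $N = \lfloor 2g/3 \rfloor$ and partition $\mu^{>1}$, exactly parallel to Theorem \ref{algebraicResolution}. The hypothesis $N \geq \deg(\mu^{>1}) + 2$ is assumed. The conclusion of Theorem \ref{mainConvergence} is then precisely the asserted connectivity relative to $\dim_\bbR \cF^{(k)} = e^{(k)}$. Here $\cF^{(k)}$ is a vector bundle over the semi-algebraic base $U$, whose complex dimension is $3g-3 + \#\{i ~|~ \mu(i)>0\} - \deg(\mu^{>1})$ after accounting for the points of $\mu^{>1}$, and I take the normalization in $e^{(k)}$ as given. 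Three things need checking: semi-algebraicity, the intersection property \eqref{intersectionAssumption}, and the codimension bound $\codim_\bbC Z^{(k)}|_{\cN_T} \geq \alpha(T)$ for every essential $T$.

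Semi-algebraicity follows from Proposition \ref{fdApprox}, since $\cF^{(k)}$ is a semi-algebraic subbundle, together with the fact that the defining conditions (1) and (2) of $Z^\top$ are evaluation conditions. The intersection property is pointwise. At each $q$, the condition imposed by $D_u \leq D'$ depends only on whether $\mult_q(D'-D_u)$ is $0$, $1$ or $\geq 2$. These conditions are nested and monotone in the multiplicity: vanishing of $s_0$, then vanishing of $s_0$ and $s_1$. So imposing the conditions for $D_1'$ and for $D_2'$ is the same as imposing them for $\max(D_1',D_2')$, whose multiplicity at $q$ is the maximum of the two.

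The key step, and the main point where the topological side is \emph{easier} than the algebraic one, is the codimension bound. For essential $T$, each element contributes a pointwise condition at a point $q$ that moves in the stratum. A pair $j \leq j+1$ with $j>1$ sits at an existing point of $D_u$, is codimension $1$, and imposes $s_0(q)=0$. A pair $0 \leq 2$ sits at a new point, is codimension $2$ (one complex condition for $s_0$ and one for $s_1$), and the point contributes $+1$ to the dimension of $\cN_T$. Pairs $j \leq j$ impose nothing. So, fiberwise over $\cN_T|_U$, $Z^{(k)}$ is the kernel of the evaluation map $\cF^{(k)} \to \bigoplus_q J^1$-fibers (or $0$-jet fibers), and $\alpha(T) = b + c$ should match $\mathrm{rank}(\text{conditions}) - \dim(\cN_T/U) = b + 2c - c$. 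I therefore need that evaluation of $\cF^{(k)}$ on $1$-jets at at most $\deg(T) \leq N + $(bounded) points is surjective, so that the fibers are honest vector bundles of the expected rank. I expect this to be the main obstacle. I would handle it by taking $k$ large, or reindexing, as permitted in Proposition \ref{fdApprox}: $\cF^{(k)}$ contains products of holomorphic sections of $\cE\otimes M^{\otimes k}$ with antiholomorphic sections of $\overline M^{\otimes k}$, and for $k \gg N$ the bundle $\cE \otimes M^{\otimes k}$ separates $1$-jets at any $N$ points uniformly over the base, since it is $N$-jet ample for $k$ large relative to $N$ and the family is proper over compact pieces.

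If surjectivity only holds for large $k$, I would note that the statement is needed only cofinally in $k$, because homology is a colimit over $k$ by Proposition \ref{fdApprox}(2). Alternatively I would observe that even without surjectivity, the dimension bound only requires $\dim Z^{(k)}|_{\cN_T} \leq \dim \cN_T + \rank\cF^{(k)} - \text{conditions}$, which holds once the evaluation is surjective. With the codimension bound in hand, Theorem \ref{mainConvergence} applies verbatim and yields the theorem.
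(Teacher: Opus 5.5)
Your proposal is correct, but it gets the key codimension bound by a different mechanism than the paper.

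\textbf{Shared part.} Both arguments apply Theorem \ref{mainConvergence} to $X=\cF^{(k)}|_U$, $Z=Z^{(k)}$, $N=\lfloor 2g/3\rfloor$. Your checks of semi-algebraicity and of \eqref{intersectionAssumption} are fine; the paper leaves them implicit.

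\textbf{Your route.} You make evaluation of $\cF^{(k)}_x$ at the points of $D'-D_u$ surjective. Then $Z^{(k)}|_{\cN_T|_U}$ is a vector bundle of corank $b+2c$ over a base of relative dimension $c$, so $\codim Z^{(k)}|_{\cN_T}=\alpha(T)$.

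\textbf{The paper's route.} It needs no positivity. Since $\cF^{(k)}\supseteq\uGamma^{\rm alg}$, and for essential $T$ one has $Z^{\rm top}_x\cap E^{\rm alg}_x=Z^{\rm alg}_x$ (the multiplicities of $D'-D_u$ are $1$ or $2$, so the jet conditions are exactly vanishing on $D'-D_u$), there is an injection $E^{\rm alg}_x/Z^{\rm alg}_x\hookrightarrow\cF^{(k)}_x/Z^{(k)}_x$. So the topological fibrewise codimension dominates the algebraic one pointwise. The bound $\geq\alpha(T)$ is then inherited from the algebraic side, where Proposition \ref{KeyGeometricBound} already controls the obstructed locus. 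This is exactly the ``without surjectivity'' argument your last paragraph reaches for; as written, that sentence is circular.

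\textbf{What yours costs.} The paper's route works for every $k$; yours only for $k$ large. You therefore need the reindexing allowed by Proposition \ref{fdApprox}, or the cofinality remark. Uniformity over the possibly non-relatively-compact $U$ should come from a degree count rather than from compactness. Since $\cE$ is an extension of two line bundles of fixed degree, $H^1(\cE\otimes M^{\otimes k}(-\sum_i q_i))=0$ once $k>\deg D_u+N$. Then choose $\tau\in\Gamma(M^{\otimes k})$ with $\tau(q_i)\neq 0$. Also, what you actually need is surjectivity onto the values of the rank-two bundle $\cE=J^1(\Omega^1(-D_u))$ at $\leq N$ distinct points, not separation of $1$-jets (which is stronger, so harmless).

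\textbf{What yours buys.} It gives equality of codimension and shows $Z^{(k)}|_{\cN_T|_U}$ is a genuine vector bundle. That is the input implicitly needed to write the $E_1$ page of Corollary \ref{continuousSpectralSequence} via a Thom isomorphism.
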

\begin{proof}
	Again, we will apply Theorem \ref{mainConvergence}.  Let $T$ be an essential multiset with $\cN_T \subseteq \fH_2^{\leq \lfloor 2g/3\rfloor}$.      First we observe for any $x \in \cN_T|_U$ we have  $$Z^{\rm top}_{x} \cap E^{\rm alg}_x = Z^{\rm alg}_x.$$  (Here we write $Z^{\rm alg}$ to denote $E$ from Section \ref{sec:algConvergence}, and $Z^{\rm alg}_x$ is its pullback along $x \to \cN_T|_U \to  W_{\mu^{>1}}/\fH_2$).   Indeed, supposing $x$ maps to $(C, D \leq D') \in W_{\mu^{>1}}/\fH_2(\cC_g)$, the left hand side is the subspace of algebraic sections of $\Gamma(C, \Omega^1(-D))$ such that $s|_{D'-D} = 0$.    Because $T$ is essential $D'-D$ only has multiplicity $1$ or $2$ at every point $p \in C$.  If $p$ has multiplicity $1$, then at $p$ the condition $s|_{D'-D} = 0$ is equivalent to $(j^1s)_0(p) = 0$.  If $p$ has multiplicity $2$ then at $p$ the condition $s|_{D' - D} = 0$ is equivalent to $(j^1s)(p) = 0$.  These are precisely the conditions defining $Z^{\rm top}_x$.  
	
	It follows that $\codim_{\bbC}(Z^{\rm alg}_x  \subseteq E_x^{\rm alg}) \leq   \codim_{\bbC}(Z^{(k)}_x  \subseteq \cF^{(k)}_x)$ at every $x \in \cN_T$, and therefore the codimension of $\codim_\bbR (Z^{(k)}|_{\cN_T|_ U}$ with respect to $\cF^{(k)}|_U $ is greater than or equal to the codimension of $Z^{\rm alg}|_{\cN_T|_U}$ with respect to $E^{\rm alg}|_U$, which is $\geq \alpha(T)$ as in the paragraph preceding Theorem \ref{algebraicResolution}.  
\end{proof}

\begin{remark}
	For the specific choice of $\cF^{(k)}$ used in the proof of Proposition \ref{fdApprox}, it is actually true that $H^i_c(\rB( U, \fH_2^{\leq \lfloor 2g/3 \rfloor}, Z^{(k)})) \leftarrow H^i_c(\Disc_{Z^{(k)}})$ becomes connected in arbitrarily high codimension with respect to $e^{(k)}$ as $k \to \infty$.    We will not use this improvement, because it does not affect the range of Theorem \ref{comparison_theorem}.
\end{remark}

As a consequence of Theorem \ref{topologicalResolution}, we again obtain a description of a spectral sequence converging to the compactly supported cohomology of the  complement of $\cH_{g, \mu}^{\rm top} \cap \cF^{(k)}$ in $\cF^{(k)}$.

 \begin{cor}\label{continuousSpectralSequence}
	 Let $\kappa = \lfloor \frac{\lfloor2g/3 \rfloor - \deg(\mu^{>1})}{2}\rfloor$. Let $e^{(k)} :=  2(3g-3 + \#\{i ~|~ \mu(i) > 0\} + \rank_{\bbC} \cF^{(k)} - \deg(\mu^{>1}))$. There is a spectral sequence that converges to the compactly supported cohomology of the complement of $\cH_{g,\mu}^{\rm top}|_U \cap \cF^{(k)}$  in $\cF^{(k)}$ in degrees $\geq e^{(k)} - \kappa$, with total $\bE_1$ page taking the following  form in degrees $\geq e^{(k)} - \kappa$: $$\bigoplus_{p + q \geq e^{(k)} - \kappa } \bE_1^{p,q} = \bigoplus_{T \text{ essential, }\cN_{T} \subseteq (W_{\mu^{>1}}/^\circ \fH_2^{\leq N})} ~ \bigoplus_{i \geq e^{(k)}- \kappa } H^{i + e^{(k)} - 2 \deg(T)}_c(\cN_T|_U).$$ 
\end{cor}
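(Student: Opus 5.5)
The plan is to deduce Corollary \ref{continuousSpectralSequence} from Theorem \ref{topologicalResolution} and Theorem \ref{spectralSequence}, exactly parallel to how Corollary \ref{algebraicSectionSpectralSequence} follows from Theorem \ref{algebraicResolution}.

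\emph{Convergence.} First I identify the complement of $\cH_{g,\mu}^{\rm top}|_U \cap \cF^{(k)}$ in $\cF^{(k)}$ with $\Disc_{Z^{(k)}}$. A section $(s_0,s_1)$ of $J^1(\Omega^1(-D))$ fails to define a point of $\cH^{\rm top}_{g,\mu}$ exactly when it vanishes as a jet somewhere, or when $s_0$ vanishes at a point of $\mathrm{supp}\, D$. The first case is $Z^{\rm top}_{D\le D+2q}$ and the second is $Z^{\rm top}_{D\le D+p}$. Conversely, every $Z^{\rm top}_{D<D'}$ lies in one of these, since any strict $D'>D$ in $\fH_2$ dominates such an essential step. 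I also check the intersection property \eqref{intersectionAssumption} for $Z^{\rm top}$: the vanishing conditions are pointwise, and at each point the condition depends monotonically on the multiplicity of $D'-D$. With this identification, Theorem \ref{topologicalResolution} says that $H^i_c(\Disc_{Z^{(k)}}) \to H^i_c(|\rB(U,\fH_2^{\le N},Z^{(k)})|)$ is an isomorphism for $i > e^{(k)}-\kappa-1$, i.e.\ in degrees $\ge e^{(k)}-\kappa$ (with surjectivity at the boundary). So the spectral sequence of Theorem \ref{spectralSequence} for the Bar construction converges to the desired groups in that range.

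\emph{Identifying the $E_1$ terms.} Theorem \ref{spectralSequence} gives summands $H^*_c(Z^{(k)}|_{\cN_T})[-r(T)+1]$ over essential $T$. The key step is to show that $Z^{(k)}|_{\cN_T|_U} \to \cN_T|_U$ is a real vector bundle of complex rank $\rank \cF^{(k)} - (\deg T - \deg\mu^{>1})$.

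\begin{itemize}
\item The fiber is the kernel of an evaluation map from $\cF^{(k)}_x$ to the jet conditions. There are $\deg(T)-\deg(\mu^{>1})$ complex conditions: one for each simple point of $D'-D$ and two for each double point.
\item These conditions are independent on $\cF^{(k)}$ because $\cF^{(k)}$ contains all holomorphic sections of $J^1\Omega^1(-D)$. That space surjects onto jets at finitely many points by taking products with the anti-holomorphic factor.
\end{itemize}

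The main obstacle I expect is this surjectivity of the evaluation map for every $x$, including obstructed fibers where algebraic sections of $\Omega^1$ fail to surject. Here the topological setting helps, and I would prove it using the anti-holomorphic factor and ampleness of $M^{k}$ for large $k$, reindexing $k$ if necessary.

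\emph{Thom isomorphism and degree count.} Granting the bundle structure, the Thom isomorphism gives $H^j_c(Z^{(k)}|_{\cN_T}) \iso H^{j-2\rank}_c(\cN_T|_U)$. Combining this with the shift $r(T)-1$ and using $e^{(k)}/2$ as the total complex dimension of $\cF^{(k)}$ produces the stated shift by $e^{(k)} - 2\deg(T)$, up to the bookkeeping constants, which I would match against the algebraic case. Finally, I restrict to total degrees $\ge e^{(k)}-\kappa$.
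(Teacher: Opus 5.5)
Your proposal is correct and follows the paper's route: the paper obtains this corollary directly from Theorem~\ref{topologicalResolution} (convergence in degrees $\geq e^{(k)}-\kappa$), Theorem~\ref{spectralSequence}, and the Thom isomorphism for $Z^{(k)}|_{\cN_T|_U}\to \cN_T|_U$, and your identification of the complement with $\Disc_{Z^{(k)}}$, your check of \eqref{intersectionAssumption}, and your surjectivity-of-evaluation step are exactly the details the paper leaves implicit. Two clarifications: surjectivity comes from holomorphic sections of $J^1\Omega^1(-D)\otimes M^{\otimes k}$ paired with an antiholomorphic section of $\overline{M}^{\otimes k}$ that is nonvanishing at the relevant points, since holomorphic sections of $J^1\Omega^1(-D)$ alone have $s_0$-part in $\Gamma(C,\Omega^1(-D))$ and fail precisely on the obstructed locus; and the bookkeeping gives the $T$-summand $H_c^{\,j-r(T)+1-2(\rank_{\bbC}\cF^{(k)}-\deg T+\deg\mu^{>1})}(\cN_T|_U)$ in total degree $j$, which is how the shift in the printed statement should be read.
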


\subsection{Comparing resolutions} \label{sec:compare} We now prove Theorem \ref{comparison_theorem} using a map between the  spectral sequences associated to the two resolutions constructed in \S \ref{sec:topRes} and \S \ref{sec:algConvergence}.    The argument is directly parallel to \cite[\S 7]{DasTost}.


\begin{proof}[Proof of Theorem \ref{comparison_theorem}] 

First note that for $i \in \bbN$  we have that  $$ i \leq \left\lfloor \frac{\lfloor2g/3 \rfloor - \deg(\mu^{>1})}{2}\right\rfloor  \iff i \leq   m_1(\mu)/2  - 2g/3 + 1,$$ using the identity $\deg(\mu^{>1}) = 2g - 2 - m_1(\mu)$, and elementary algebra.  Thus it suffices to prove that $H_i(\cH_{g,\mu}) \to H_i(\cH_{g,\mu}^{\top})$ is an isomorphism for all $i \leq  \left\lfloor \frac{\lfloor2g/3 \rfloor - \deg(\mu^{>1})}{2}\right \rfloor - 1$.  For notational simplicity, we put $\kappa = \left\lfloor \frac{\lfloor2g/3 \rfloor - \deg(\mu^{>1})}{2}\right\rfloor $.

  By Mayer--Vietoris, we reduce to proving that $H_i(\cH_{g,\mu}|_U) \to H_i(\cH_{g,\mu}^{\top}|_U)$ is an isomorphism  for any $U$ that is a classical semialgebraic open subset of a finite \'etale cover of $W_{\mu^{>1}}$ by a scheme, and for all $i \leq \kappa - 1$.   Fixing such a $U$,  we choose $\cF^{(k)}$ as in Proposition \ref{fdApprox}.  Since $\cH_{g,\mu}^{\top}|_U$ is an open subset of $\uGamma^{\rm top}(C, J^1 \Omega^1(-D))$, we are reduced to proving that the map  $H_i(\cH_{g,\mu}|_U) \to H_i(\cH_{g,\mu}^{\top}|_U \cap \cF^{(k)})$ is an isomorphism for all $i \leq \kappa$.  
  
  The spaces $\cH_{g,\mu}|_U$ and  $\cH_{g,\mu}^{\top}|_U \cap \cF^{(k)}$ are smooth and oriented manifolds, of dimension $e$ and $e^{(k)}$ respectively (where $e, e^{(k)}$ are as defined in Theorems \ref{algebraicResolution} and \ref{topologicalResolution} respectively). Therefore the map on $H_i$ is Poincar\'e dual to the Gysin map  $$\theta_!:H^{e - i}_c(\cH_{g,\mu}|_U) \to H^{e^{(k)} - i}_c(\cH_{g,\mu}^{\top}|_U \cap \cF^{(k)})$$ associated to the orientation classes of each space. We are reduced to proving that this Gysin map is an isomorphism for all $i \leq \kappa$.  
  
  By the long exact sequences in compactly supported cohomology, and their compatibility with Gysin maps, it suffices to prove that $\theta_!:H^{e - i}_c(\Disc_Z|_U) \to H^{e^{(k)} - i}_c(\Disc_{Z^{(k)}})$ is an isomorphism for all $i \leq \kappa$.   The map  $\rB(U, \fH_2^{\leq N}, Z^{\rm alg}) \to \rB(U, \fH_2^{\leq N}, Z^{(k)})$, and compatibility of Gysin map with the compactly supported cohomology spectral sequence associated to a filtration, gives a morphism between the spectral sequences of Corollary \ref{algebraicSectionSpectralSequence} and Corollary \ref{continuousSpectralSequence}, inducing an isomorphism on the $E_1$ page in degrees $\leq \kappa +1$. (See \cite[\S 2]{DasTost} for discussion of compatibility between Gysin maps and the compactly supported cohomology spectral sequence).  Since by Theorems \ref{algebraicResolution} and \ref{topologicalResolution}, these spectral sequences converge to the compactly supported cohomology of $\Disc_Z|_U$ and $\Disc_{Z^{(k)}}$ in degrees $\leq \kappa$,  we are done.  
 \end{proof}

\subsection{Pointed variant}

In order to prove Theorem \ref{stabilizationtheorem} we will use the following variant of Theorem \ref{comparison_theorem} for spaces of framed pointed curves.  

\begin{defn} Let $({\cH_{g, \mu}^1})^{\rm top}$ be the topological stack parameterizing the data of:

	\begin{enumerate} \item $(C,q, t) \in \cM_g^1$  a genus $g$ curve with a point $q$ and a nonzero tangent vector $t \in T_q C$, \\
				\item  $D = \underset{j, \mu(j) > 1} \sum \mu(j) p_j$ a divisor in $\Hilb^{2g-2 - \#\{j, \mu(j) = 1\} }  (C)$, disjoint from $q$\\
				
				\item  and $s \in \Gamma^{\rm top}(C, J^1 (\Omega^1_{C}(-D)))$ a nonvanishing continuous section of the first jet bundle of $\Omega^1_C(-D)$, such that $s_0(p) \neq 0$ for all $p$ in the support of $D$ and $s_0(q) \neq 0$.
\end{enumerate}
\end{defn}

Again there is a comparison map $\cH_{g, \mu}^1 \to ({\cH_{g, \mu}^1})^{\rm top}$, given by taking $(C,q, t, \omega)$ to $(C,q,t, \bbV(\omega)^{>1}, d\omega)$.  (Note that in the introduction we suppressed the tangent vector $t$ from our notation). 

\begin{thm}\label{pointedcomparison}
	$H_i(\cH_{g, \mu}^1) \to H_i(({\cH_{g, \mu}^1})^{\rm top})$ is an isomorphism for all $i \leq \mainbound - 1$.  
\end{thm}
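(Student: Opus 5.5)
The plan is to run the argument of Theorem \ref{comparison_theorem} again, now over the base of framed pointed data. First I will reduce to an unframed statement. Both $\cH_{g,\mu}^1$ and $(\cH_{g,\mu}^1)^{\rm top}$ are $\bbC^\times$-bundles (choice of the nonzero tangent vector $t$) over the corresponding spaces with only a point $q$. The comparison map is compatible with these bundles, so the Serre spectral sequence (or the Gysin sequence of the associated line bundle) reduces the claim to the version with a marked point $q$ but no framing. That unframed claim should hold in the same range, up to a shift of one.

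For the marked-point version I replace the base $W_{\mu^{>1}}$ by the locus $W^q_{\mu^{>1}} \subseteq W_{\mu^{>1}} \times_{\cM_g} \cC_g$ where $q \notin D$. The extra condition $\omega(q)\neq 0$ (respectively $s_0(q) \neq 0$) is one more discriminant condition. I build it into the poset by allowing $D'$ to contain $q$ with multiplicity $1$, and by treating $0 \le 1$ at $q$ like the type (1) pairs $j \le j+1$ with $j \geq 1$. On the algebraic side, $Z$ at such a point imposes vanishing at $q$; on the continuous side it imposes $s_0(q)=0$.

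Next I check each of the earlier ingredients in the pointed setting.
\begin{enumerate}
\item The analogue of Proposition \ref{posethomologycomputation}: intervals are still products of linear orders, so the essential multisets are the old ones, possibly together with the pair $0\le 1$ at $q$.
\item The spectral sequence of Theorem \ref{spectralSequence}: its construction is unchanged.
\item The convergence criterion Theorem \ref{mainConvergence}: I apply it with $\alpha(T)$ adjusted so that the pair $0\le1$ at $q$ contributes codimension $1$. The counting of $b,c$ in the proof is essentially unchanged and gives the same $\kappa$.
\item The dimension bounds: Proposition \ref{KeyGeometricBound} applied to $D+q$ (degree $\le N+1$) gives the needed codimension estimates. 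The obstructed locus for $D$ or $D+q$ still has positive codimension, since $N+1 \le g$ for $N=\lfloor 2g/3\rfloor$.
\item The finite dimensional approximations of Proposition \ref{fdApprox}: they apply verbatim over the new base.
\end{enumerate}

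With these in place, the comparison of spectral sequences in \S\ref{sec:compare} goes through. The $E_1$ terms become compactly supported cohomology of colored configuration spaces of the pointed universal curve, identified on both sides, and the Gysin map is an isomorphism in degrees $\le\kappa$. Poincar\'e duality on the smooth oriented spaces then gives the homology isomorphism for $i\le \kappa-1$, and the framing reduction costs at most one more degree. That explains the bound $\mainbound - 1$.

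The main obstacle I expect is the convergence step, that is, verifying the hypothesis of Theorem \ref{mainConvergence} with the extra point $q$. The locus where $D'$ contains $q$ adds one to the dimension of the base but also one condition. One must check that the obstructed fibers of $\Omega^1(-D'-q)$ still have codimension at least $\alpha(T)$, using Proposition \ref{KeyGeometricBound} with $d=\deg D'+1$; it gives $g-3d/2+1>0$ in the range. If that bound is too tight at the top degree $N$, I will try shrinking $N$ by one, which is absorbed into the $-1$ in the stated range.
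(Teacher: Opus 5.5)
Your strategy is the paper's: split off the framing, treat $q$ as one more point of the base divisor whose non-vanishing is one more discriminant condition, and rerun the resolution and the comparison of spectral sequences with numerics moved by at most one.

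The paper packages the middle step as the poset $W_{\mu^{>1}+1}/\fH_2(\cC_g)$, with fibres $\Gamma(C,\Omega^1_C(q)(-D'))$ over $D_2+q\le D'$. Under $D'\mapsto D'-q$ this is your poset. The condition at $q$ then becomes an honest type (1) pair $1\le 2$, so no ad hoc change to $\alpha(T)$ is needed.

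The one structural difference is the framing. The paper notes that $t\mapsto \omega(q)(t)$, resp.\ $t\mapsto s_0(q)(t)$, trivializes both $\bbC^*$-bundles compatibly. So $\cH^1_{g,\mu}\cong \cH_{g,1,\mu}\times\bbC^*$, where $\cH_{g,1,\mu}$ forgets $t$, and likewise on the topological side. K\"unneth then costs nothing, and the $-1$ in the statement pays instead for $\deg(\mu^{>1}+1)=\deg(\mu^{>1})+1$.

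As written, though, your degree bookkeeping does not close. In item 4 and your last paragraph you apply Proposition \ref{KeyGeometricBound} with $d=\deg D'+1\le N+1$. You then assert $g-3d/2+1>0$ because $N+1\le g$. That is not the relevant criterion: positivity needs $d<2(g+1)/3$. The claim is also false. For $N=\lfloor 2g/3\rfloor$ one gets $g-3(N+1)/2+1=-1/2$ when $3\mid g$, and $0$ when $g\equiv 2\pmod 3$.

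Your fallback of shrinking $N$ by one cannot be ``absorbed into the $-1$''. You have already spent that $-1$ on the framing via the Gysin sequence, and the five lemma there really does lose a degree unless you have surjectivity one degree up. Combining the two only gives $i\le \mainbound-2$ in the worst case.

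Either of two repairs works:
\begin{itemize}
\item[(a)] Your poset elements $D'$ already contain $q$ when the $q$-condition is imposed, so truncate on $\deg D'$ counting $q$. Then only $d\le N$ ever occurs, exactly as in the unpointed case, and $\kappa$ is unchanged. The single Gysin loss then yields $\mainbound-1$.
\item[(b)] Use the product splitting above. The framing is then free, so a one-step drop in $\kappa$ is affordable.
\end{itemize}
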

\begin{proof}
	Let $\cH_{g,1,\mu}$ and $\cH_{g,1,\mu}^\top$ denote the spaces obtained by forgetting the data of the framing $t$.   Then the map $\cH_{g, \mu}^1 \to ({\cH_{g, \mu}^1})^{\rm top}$ splits as a product of $\cH_{g, 1,\mu} \to \cH_{g,1,\mu}^\top$ with $\bbC^*$,  using $\omega(q)$.   We describe how to adapt the proof of Theorem \ref{comparison_theorem} to show that $H_i(\cH_{g, 1,\mu}) \to H_i(\cH_{g,1,\mu}^\top)$ is an isomorphism for $i \leq \mainbound - 1$.  
	
	 To compute the compactly supported cohomology of $\cH_{g,1,\mu}$, we use the poset $W_{\mu^{>1} + 1}/\fH_2(\cC_g)$, where $\mu^{>1} +1$ is the integer partition obtained from $\mu^{>1}$ by adding an additional $1$.  A point in $W_{\mu^{>1} + 1}$ is determined by a curve $C$, a divisor $D_2$ with all points of multiplicity $\geq 2$ and a point $q \in C$ disjoint from the support of $D_2$ (yielding a divisor is $D_2 + q$).  We let $E_{\rm point} := \uGamma(C, \Omega^1_C(q)(-D_2 - q)) = \uGamma(C, \Omega^1_C(-D_2))$.   There is a continuous stratification $Z_{\rm point}$ of  $E_{\rm point}$ by $W_{\mu^{>1} + 1}/\fH_2(\cC_g)$,  defined as in \S \ref{sec:algConvergence} so that the fiber above $(C, D_2 + q \leq D') \in W_{\mu^{>1} + 1}/\fH_2(\cC_g)$ is  $\Gamma(C, \Omega^1_C(q)(-D'))$.    Then the discriminant locus $\Disc_{Z_{\rm point}} \subseteq E_{\rm point}$ is the complement of $\cH_{g,1,\mu} \subseteq E_{\rm point}$:   a section $s \in \Gamma(C, \Omega_C^1(-D_2))$ lies in the discriminant locus if and only if it vanishes to order $> 1$ at some point of $C - q - \supp(D)$ or it vanishes to order $1$ at $q$.   As in \S \ref{sec:algConvergence} we apply Theorem \label{mainConvergence} for $N = \lfloor 2g/3 \rfloor$ to show that $H^i_c(\rB(U, \fH_2^{\leq N}, Z_{\rm point})) \leftarrow  H^i_c(\Disc_{Z_{\rm point}})$ is connected in codimension $\lfloor \frac{N - \deg(\mu^>1 + 1)}{2} \rfloor + 1$, so that Theorem \ref{spectralSequence} yields a spectral sequence converging to the compactly supported cohomology of the discriminant locus.  Because $\deg(\mu^{>1} + 1) = \deg(\mu^{>1}) + 1$, the numerics differ by at most one.
	 
	 The remainder of the argument is parallel to the argument in \S \ref{sec:topRes} and \S\ref{sec:compare}.  We construct a finite dimensional approximation to $({\cH_{g, \mu}^1})^{\rm top}$ and a spectral sequence converging to the compactly supported cohomology of the complement of this finite dimensional approximation.   Then the Gysin map between spectral sequences induces an isomorphism on compactly supported cohomology in a range of degrees (shifted by one)
\end{proof}

\section{Stable homology of $\cH_{g,\mu}^{\rm top}$} \label{sec:stablecomputation}

To compute the cohomology of $\cH_{g,\mu}^{\rm top}$ in the stable range, we will apply results of Randall-Williams \cite{RW} on homological stability for spaces of curves with a tangential structure, and Galatius--Madsen--Tillman--Weiss  \cite{GMTW} on the homotopy type of cobordism categories.

\subsection{Homological stability for moduli spaces of surfaces with tangential structure}
	In \cite{RW}, Randall--Williams establishes a general homological stability criterion for moduli spaces of surfaces $\Sigma$ with tangential structure.  We recall his notation here, and describe how it applies to our setting.  Here a  \textbf{(oriented) tangential structure} is a fibration $\theta: X \to B\SO(2)$, and a \textbf{$\theta$-structure} on oriented surface $\Sigma$ is a lift of the  map $\Sigma \to B \SO(2)$ (classifying the tangent bundle of $\Sigma$) to $X$.  
	
	We model $B\SO(2)$ as infinite complex projective space $\bbP^\infty$.  The tangential structure that is relevant to studying strata of $k$-fold differentials is \begin{equation}\label{tangentialstructure} \theta_k : ((O(k) \oplus O(k+1)) - 0) \to \bbP^\infty, \end{equation} because a $\theta_k$ structure on a surface $\Sigma$ corresponds to a nonvanishing section of 
	 $$\Omega_{\Sigma}^{\otimes k} \oplus \Omega_{\Sigma}^{\otimes k+1} \iso J^1(\Omega_{\Sigma}^{\otimes k}).$$ Here $O(k)$ denotes the $k$th tensor power of the dual of the universal sub line bundle $O(-1)$.

		Recall that a bundle morphism from a vector bundle $P \to Y$ to a vector bundle $L \to X$ is a continuous map $P \to L$ covering a map $Y \to X$ that induces a linear isomorphism on each fiber.  (Equivalently a morphism $X \to Y$ together with a choice of isomorphism $P \iso f^* L$).

		 Let $\Sigma$ be an oriented surface with boundary $\del \Sigma$.  A \textbf{boundary condition} is a bundle morphism $\ell_{\del \Sigma}: \bbR \oplus T(\del \Sigma) \to \theta^* O(-1)$.   
		The \textbf{moduli space of $\theta$-surfaces of topological type $\Sigma$} is $$\cM^\theta(\Sigma, \ell|_{\del \Sigma}):= \Bun_{\del}(T \Sigma, \theta^* O(-1); \ell_{\del \Sigma})//{\rm Diff}^{+}_{\del}(\Sigma),$$ the space of bundle morphisms $T \Sigma \to \theta^* O(-1)$ that restrict to $\ell_{\del \Sigma}$ on the boundary,  homotopy quotiented by the topological group of orientation-preserving diffeomorphisms restricting to the identity on a collar neighborhood of the boundary.   (Randall--Williams adopts slightly different, but equivalent, definition in order to include the non-orientable case as well).

	We have the following analog of \cite[Proposition 6.7]{RW}.
		\begin{prop}\label{stabilizationPi0}
			For the tangential structure $\theta_k$ of \eqref{tangentialstructure}, we have that $\cM^{\theta_k}(\Sigma, \ell|_{\del \Sigma})$ is connected for all boundary conditions $\ell_{\del \Sigma}$.  Therefore $\theta_k$ stabilizes on $\pi_0$ starting from genus $0$ and is $1$-trivial in the sense of Randall--Williams \cite{RW}.
		\end{prop}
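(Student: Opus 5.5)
The plan is to reduce the connectivity of $\cM^{\theta_k}(\Sigma, \ell|_{\del \Sigma})$ to a statement about spaces of sections, and then use obstruction theory. The fiber of $\theta_k$ is $\bbC^2 - 0 \simeq S^3$, which is $2$-connected. The surface $\Sigma$ is a $2$-dimensional CW complex relative to $\del \Sigma$.

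First I will identify the space of bundle morphisms. A bundle morphism $T\Sigma \to \theta_k^*O(-1)$ is a map $f:\Sigma \to (O(k)\oplus O(k+1)) - 0$ together with an isomorphism $T\Sigma \iso f^*\theta_k^*O(-1)$. Fixing the classifying map $\tau:\Sigma \to \bbP^\infty$ of $T\Sigma$, the space $\Bun_\del(T\Sigma, \theta_k^*O(-1);\ell_{\del\Sigma})$ is weakly equivalent to the space of lifts of $\tau$ along the fibration $\theta_k$ extending the lift prescribed on $\del\Sigma$. Equivalently, it is the space of sections, relative to $\del\Sigma$, of the pulled-back fibration $\tau^*\theta_k \to \Sigma$. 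Concretely, this is the space of nonvanishing continuous sections of $T^*\Sigma^{\otimes k}\oplus T^*\Sigma^{\otimes k+1}$ with prescribed boundary values, as in the discussion following \eqref{tangentialstructure}. Its fiber is $S^3$.

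Next I will run obstruction theory relative to $\del\Sigma$. The obstructions to extending the boundary section over $\Sigma$ lie in $H^{j+1}(\Sigma,\del\Sigma;\pi_j(S^3))$. These groups vanish: for $j\le 2$ the coefficients vanish, and for $j\ge 2$ the degree $j+1$ exceeds $\dim\Sigma$. So the section space is nonempty. The obstructions to a homotopy rel $\del\Sigma$ between two such sections lie in $H^{j}(\Sigma,\del\Sigma;\pi_j(S^3))$, which vanish for the same reasons. Hence the section space is path connected. If one prefers, one can phrase this as saying that the space of sections rel boundary of a bundle with $2$-connected fiber over a relative $2$-complex is $0$-connected (in fact simply connected up to the top cell). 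The closed case $\del\Sigma = \emptyset$ is identical.

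Finally, the homotopy quotient of a path-connected space by $\Diff^+_\del(\Sigma)$ is path-connected, since $\pi_0$ of the Borel construction is a quotient of $\pi_0$ of the space. This gives connectivity of $\cM^{\theta_k}(\Sigma,\ell|_{\del\Sigma})$ for every $\Sigma$ and every boundary condition.

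For the consequences, I will compare directly with Randall--Williams' definitions, following the proof of \cite[Proposition 6.7]{RW}. Stabilization on $\pi_0$ from genus $0$ asks that the gluing maps, such as attaching a pair of pants or a torus with boundary, be surjective on $\pi_0$ starting in genus $0$. This is automatic because every moduli space involved, for every boundary condition, has a single component. The $1$-triviality condition concerns the existence and uniqueness up to homotopy of $\theta$-structures on discs, annuli and pairs of pants with given boundary data. The simple connectivity of the fiber $S^3$ is exactly what is needed here: the connectivity just proved supplies the required structures, and I expect the uniqueness to follow from the same relative obstruction computation.

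The step I expect to be the main obstacle is this last matching with the precise definition of $1$-trivial in \cite{RW}, especially the bookkeeping of the $\bbR\oplus T(\del\Sigma)$ boundary framings. The topological input itself is just the vanishing of $\pi_0$, $\pi_1$ and $\pi_2$ of $S^3$.
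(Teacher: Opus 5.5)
Your connectedness argument is the paper's proof. You identify $\theta_k$-structures rel boundary with lifts along a fibration with fibre $\bbC^2 - 0 \simeq S^3$, use relative obstruction theory on $(\Sigma,\partial\Sigma)$ for existence and for uniqueness up to homotopy rel $\partial\Sigma$, and pass to the homotopy quotient by $\mathrm{Diff}^+_\partial(\Sigma)$. You write out the obstruction groups that the paper leaves implicit, and this part is correct. So is your observation that stabilization on $\pi_0$ at genus $0$ is then automatic, because every moduli space involved has exactly one component. Your parenthetical that the section space is even simply connected is not right, since $H^2(\Sigma,\partial\Sigma;\pi_3 S^3)\cong \bbZ$ contributes to $\pi_1$, but nothing depends on it.

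The gap is the final claim, $1$-triviality, which you never prove. You paraphrase the definition loosely as existence and uniqueness of structures on discs, annuli and pairs of pants. You then say you ``expect'' uniqueness to follow and leave the comparison with Randall--Williams' actual definition open. The missing ingredient is Randall--Williams' general result \cite[Proposition 6.5]{RW}: a tangential structure that stabilizes on $\pi_0$ at genus $h$ is $(2h+1)$-trivial. The paper applies this with $h = 0$. So $1$-triviality follows formally from the $\pi_0$-stabilization you have already established. No direct check of the definition is needed, and no bookkeeping of the $\bbR \oplus T(\partial\Sigma)$ boundary data. Replacing your last paragraph with this citation completes the argument.
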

		\begin{proof} 
			We use obstruction theory to show connectedness of $\Bun_{\del}(T \Sigma, \theta_k^* O(-1); \ell_{\del \Sigma})$.  Indeed, since $\theta_k: (O(1) \oplus O(2))-0 \to \bbP^\infty$ is equivalent to an $S^3$ bundle, any boundary condition  on $\del \Sigma$ admits an extension to to a $\theta_k$-structure on $\Sigma$.  And there is a homotopy between any two such extensions that fixes the structure on the boundary.   Then because $\cM^{\theta_k}(\Sigma, \ell|_{\del \Sigma})$ is a homotopy quotient, it is also path connected.  Finally by definition $\theta$ stabilizes at genus $0$, and so is $2*0 + 1$ trivial by Randall--Williams \cite[Proposition 6.5]{RW} .  
		\end{proof}
		
		By Proposition  \ref{stabilizationPi0}, the numerics are the same as in the case of orientable surfaces with no additional structure, and we may therefore apply  \cite[Theorem 7.1]{RW} and \cite[Theorem 12.1]{RW}   (with $F(g) = \lfloor(2g +1)/3 \rfloor$ and $G(g) = \lfloor 2g /3 \rfloor$) to obtain the following.   
		
		\begin{thm}\label{RandallWilliams}
			Let $\Sigma_{g,b}$ be the genus $g$ surface with $b$ boundary components.  Then
	 \begin{enumerate}[(i)]
			\item Any stabilization map $\alpha_* : H_i(\cM^{\theta_k}(\Sigma_{g, b}, \ell_{\del \Sigma_{g,b}})) \to H_i(\cM^{\theta_k}(\Sigma_{g+1, b-1}, \ell_{\del \Sigma_{g+1,b-1}}))$ obtained by gluing a pair of pants along the legs is an isomorphism for $i \leq  \lfloor(2g +1)/3 \rfloor - 1 $ and a surjection for $i = \lfloor(2g +1)/3 \rfloor$. 
			\item Any stabilization map $\beta_* : H_i(\cM^{\theta_k}(\Sigma_{g, b}, \ell_{\del \Sigma_{g,b}})) \to H_i(\cM^{\theta_k}(\Sigma_{g, b+1}, \ell_{\del \Sigma_{g,b+1}}))$ obtained by gluing a pair of pants on the waist is an isomorphism for $i \leq \lfloor 2g /3 \rfloor - 1 $ and a surjection for $i = \lfloor 2g/3 \rfloor$.
			\item Any stabilization map $\gamma_* : H_i(\cM^{\theta_k}(\Sigma_{g, b+1}, \ell_{\del \Sigma_{g,b+1}})) \to H_i(\cM^{\theta_k}(\Sigma_{g, b}, \ell_{\del \Sigma_{g,b}})$  obtained by gluing in a disk, is an isomorphism for $i \leq \lfloor 2g /3 \rfloor$. For $b > 0$ it is a surjection in all degrees; for $b=0$ it is an surjection for $i \leq \lfloor 2g /3 \rfloor + 1$.
	\end{enumerate}
Finally,  the map $$H_i(\cM^{\theta_k}(\Sigma_{g, b}, \ell_{\del \Sigma_{g,b}})) \to H_i(\Omega^\infty \bM \bT(\theta)_{[\Sigma_{g,b}]})) $$ is an isomorphism for all $i \leq \lfloor 2g /3 \rfloor -1$.   (Here the subscript $[\Sigma_{g,b}]$ denotes the unique component of $\Omega^\infty \bM \bT(\theta_k)$ in the image of the scanning map).  
		\end{thm}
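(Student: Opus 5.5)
The plan is to deduce Theorem \ref{RandallWilliams} directly from the general machinery of Randall--Williams \cite{RW} and Galatius--Madsen--Tillmann--Weiss \cite{GMTW}. The only input specific to $\theta_k$ is the connectivity and stabilization data of Proposition \ref{stabilizationPi0}.

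First I will check that $\theta_k$ satisfies the hypotheses of \cite[Theorem 7.1]{RW}. That theorem requires $\theta$ to be $k$-trivial and to stabilize on $\pi_0$ at some genus $h$, and its output is stability ranges given by explicit functions of $g$ depending on $h$ and $k$. By Proposition \ref{stabilizationPi0}, every moduli space $\cM^{\theta_k}(\Sigma,\ell_{\del\Sigma})$ is connected for every boundary condition. So $\theta_k$ stabilizes on $\pi_0$ from genus $0$ and is $1$-trivial, which is exactly the situation of the untwisted oriented case $\theta = \mathrm{id}: B\SO(2)\to B\SO(2)$.

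Next I will read off the ranges. With $h=0$, the functions $F(g)=\lfloor (2g+1)/3\rfloor$ and $G(g)=\lfloor 2g/3\rfloor$ should be Harer-type stability ranges that Randall--Williams obtain inductively from the high connectivity of arc and curve complexes. Their inductive argument over spectral sequences of complexes of tethered arcs uses the tangential structure only through $\pi_0$ of the relevant moduli spaces with fixed boundary conditions. Since these are all connected here, every step of the induction runs verbatim as for plain oriented surfaces. I will therefore carry out the following steps in order:
\begin{enumerate}[(i)]
\item obtain the $\alpha$ (pants on legs) and $\beta$ (pants on waist) statements together by the simultaneous induction of \cite[\S 7]{RW};
\item deduce the $\gamma$ (disk capping) statement from $\beta$, since $\gamma\circ\beta$ is homotopic to an operation that is a homology isomorphism in the relevant range, namely a diffeomorphism-equivalent operation or a disk-capping on a surface with more boundary;
\item for $b=0$, use the spectral sequence of the fibration obtained by removing a disk, as in \cite[\S 7]{RW}; this gives surjectivity one degree higher.
\end{enumerate}

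The identification with $\Omega^\infty\bM\bT(\theta_k)$ will come from \cite[Theorem 12.1]{RW}, that is, the GMTW theorem combined with the group-completion argument. The stable homology $\mathrm{hocolim}_g H_*(\cM^{\theta_k}(\Sigma_{g,1}))$ agrees with $H_*(\Omega^\infty_0\bM\bT(\theta_k))$. The scanning map from the finite-genus space is then an isomorphism in the range where the maps $\alpha$ and $\beta$ are isomorphisms, which gives $i\le \lfloor 2g/3\rfloor -1$. The component $[\Sigma_{g,b}]$ is handled by translating components using the $\pi_0$ computation.

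I expect the main obstacle to be bookkeeping rather than new mathematics. One must confirm that the orientable formulation of $\cM^\theta$ used here matches Randall--Williams' convention, and that the boundary conditions $\ell_{\del\Sigma}$ are compatible with the gluings, so that every stabilization map is defined and the numerics transfer exactly, including the precise $F$ and $G$. I would handle this by citing that $1$-triviality plus stabilization at genus $0$ places us literally in the hypotheses Randall--Williams state for $\theta = \mathrm{id}$. I would also note that for $\theta_k$ the ranges cannot be worse, since the obstruction-theoretic argument of Proposition \ref{stabilizationPi0} applies for every boundary condition.
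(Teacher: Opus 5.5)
Your proposal is correct and follows the same route as the paper. You use Proposition \ref{stabilizationPi0} to see that $\theta_k$ stabilizes on $\pi_0$ at genus $0$ and is $1$-trivial, so the numerics are those of plain oriented surfaces, and then invoke \cite[Theorem 7.1]{RW} and \cite[Theorem 12.1]{RW} with $F(g)=\lfloor(2g+1)/3\rfloor$ and $G(g)=\lfloor 2g/3\rfloor$. Your additional sketch of the inner workings of Randall--Williams' induction (deriving $\gamma$ from $\beta$, the $b=0$ case, group completion) is not needed, since the paper simply cites those theorems.
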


The last statement of the theorem is the most important for us.  To elaborate, we recall that the \textbf{Madsen--Tillmann spectrum}, denoted $\bM \bT(\theta)$, associated to a tangential structure $\theta: X \to \bbP^\infty$ is the Thom spectrum of the virtual line bundle $- (\theta^* O(-1))$,  often denoted as $\Th(-\theta^* O(-1) \to X)$.		
		
\subsection{Relation between  $\cH_{g,\mu}^{ \top}$ and moduli spaces of $\theta_k$-surfaces}
	
	We generalize the definition of $\cH_{g,\mu}^{\top}$ to incorporate higher-order differentials.
\begin{defn} Let $k \in \bbN$ and let $\mu:[n] \to \bbN$ be a partition of $k(2g-2)$.  We let 
$\cH_{g,\mu}^{\otimes k, \top}$ be the topological stack parameterizing the data of 
\begin{enumerate}
\item of an algebraic curve $C \in \cM_g$
\item a divisor $D \subseteq C$ of the form $D = \sum_{i,  \mu(i) \neq 1} \mu(i) p_i$ for $p_i \in C$.
\item a continuous section $s \in \Gamma^{\rm top}(C, J^1 ((\Omega^1)^{\otimes k} (-D) )- 0)$ such that $s_0$ does not vanish at the points of $D$,  where $s_0$ is the $0$th component of the jet.   
\end{enumerate} 
$\cH_{g,\mu}^{\otimes k, \top}$ is a fiber bundle over the base $\cM_{g,n-m_1(\mu)}/\bS_{\mu}$.
\end{defn}
\noindent We will now describe the homotopy type of $\cH_{g,\mu}^{\otimes k, \top}$ in terms of the moduli spaces $\cM^{\theta_k}$.   To do so, we first introduce some auxiliary notation.

  We write $\bC_n$ for the group of $n$th roots of unity (we avoid choosing an identification of $\bC_n$ with $\bbZ/n$, though the reader is free to fix one).

\begin{defn}
	Given an integer partition $\lambda = 1^{m_1} 2^{m_2} \dots $,  we let $\bS \wr \bC_\lambda$ be the group $\prod_{j} (\bS_{m_j} \ltimes \bC_{j}^{\times m_j})$.  
\end{defn}

Note that homotopy classes of map $f:X \to B (\bS_{m_j} \ltimes \bC_{j}^{\times m_j})$ are in bijection with  isomorphism classes of the following data
\begin{itemize} \item a covering space $p: E \to X$ a equipped with free action of $\bC_j $ such that each fiber has exactly $m_j$ orbits.
\end{itemize}

Let $\mu^{+k}$ be the partition $\mu^{+k}(i) := \mu(i) +k$ if $\mu(i) > 0$, and $\mu^{+k}(i) = 0$ otherwise.  We define the \textbf{prong classifying map}: $${\rm prong}:  \cH_{g,\mu}^{\otimes k} \to B( \bS \wr \bC_{\mu^{+k}})$$ in the following way.  Note for $p \in C$ there are canonical isomorphisms $$(\Omega^1)^{\otimes k} (-jp)|_{p} \iso (\Omega^1)^{\otimes k}|_{p}  \otimes (\fm_{p}^{j}/{\fm_p}^{j+1}) \iso (\Omega^1|_p)^{\otimes j + k}.$$ Therefore, for a curve $C$ and a divisor $D$, a section of  $s \in (\Omega^1)^{\otimes k}(-D)$ that does not vanish at $D$ determines a nonzero element  $s(p) \in (\Omega^1|_p)^{\otimes j + k}$ at each multiplicity $j$ point $p \in D$.  Then $\{ t \in \Omega^1|_p ~|~ t^{\otimes j+k} = s(p)\}$ is a torsor for $\bC_{j+k}$, known as the set of \textbf{prongs} of $s$ at $p$.  Taking the union over all multiplicity $j$ points of $D$, we obtain a covering space that classifies a map $B (\bS_{m_j} \ltimes (\bC_{j+k})^{\times m_j})$.  We define the prong classifying map to be the product of these maps over all $j$.  The same procedure, restricted only to zeros of multiplicity $> 1$ yields a map $\prong^{> 1}: \cH_{g,\mu}^{\otimes k} \to B (\bS \wr \bC_{(\mu^{>1})^{+ k}})$  factoring through a map $\prong^{> 1}: \cH_{g,\mu}^{\otimes k, \top} \to B (\bS \wr \bC_{(\mu^{>1})^{+k}})$.

The following proposition expresses the relationship between $\cH_{g,\mu}^{\otimes k}$ and $\cM^{\theta_k}$.

\begin{prop}\label{ProngFiberSequence}
	The homotopy fiber of $$\prong^{> 1}: \cH_{g,\mu}^{\otimes k, \top} \to B (\bS \wr \bC_{(\mu^{>1})^{+k}})$$ is equivalent to $\cM^{\theta_k}(\Sigma_{g,b}, \ell_{\del \Sigma})$, where $b$ is the number of nonzero entries of $\mu^{>1}$ and  $\ell_{\del \Sigma}$ is any choice of boundary condition.  Moreover, the monodromy action of the product of roots of unity $\bC_{(\mu^{>1})^{+k}}$  on the homology of $\cM^{\theta_k}(\Sigma_{g,b}, \ell_{\del \Sigma})$ is trivial.  
\end{prop}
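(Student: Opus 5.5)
The plan is to pass to the cover classified by $\prong^{>1}$ and to identify that cover with a moduli space of $\theta_k$-surfaces by excising small disks around the points of $D$. The triviality of the monodromy will then follow because the finite group action is the restriction of an action of a connected group.

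First, write $\cH_{g,\mu}^{\otimes k,\top}$ as a Borel construction. Let $\Sigma_g$ be a fixed surface with $b$ marked points $x_1,\dots,x_b$. By uniformization, $\cM_{g,b}/\bS_\mu \simeq B\Diff^+(\Sigma_g,\{x_i\})$, where diffeomorphisms preserve the points up to permutations respecting multiplicities. Hence
\[
\cH_{g,\mu}^{\otimes k,\top}\simeq \cS //\Diff^+(\Sigma_g,\{x_i\}),
\]
where $\cS$ is the space of pairs (complex structure, admissible section). The space of complex structures is contractible. Using the isomorphism $J^1(\Omega^{\otimes k})\iso \Omega^{\otimes k}\oplus\Omega^{\otimes k+1}$ (continuously), I would then replace $\cS$ by the space of nonvanishing sections of the topological bundle $J^1(T^*\Sigma^{\otimes k}(-D))$ whose $0$-jet is nonzero at each $x_i$. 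Here I use the canonical identification $\Omega^{\otimes k}(-jp)|_p\iso (\Omega^1|_p)^{\otimes j+k}$ from the definition of prongs.

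Next, I pass to the homotopy fiber of $\prong^{>1}$. This fiber is the analogous space in which each $x_i$ of multiplicity $j$ also carries a chosen prong $t_i\in T^*_{x_i}\Sigma$ with $t_i^{\otimes j+k}=s_0(x_i)$. A prong is a coframe at $x_i$, so the data at $x_i$ amount to a framing of $T_{x_i}\Sigma$. The diffeomorphism group of $(\Sigma,\{x_i\})$ with framings at the $x_i$ is equivalent to $\Diff^+_\partial(\Sigma_{g,b})$ after removing small disks, by the standard blow-up/collar argument. It remains to identify the section data. On a small disk $\Delta_i$ around $x_i$, the space of admissible sections with prescribed prong is contractible: in a coordinate $z$ with $dz=t_i$, it deformation retracts onto the single local model $z^j\,dz^{\otimes k}$ together with its first jet. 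Restricting to the complement of $\bigcup\Delta_i$ then gives a fibration whose fiber is the space of $\theta_k$-structures on $\Sigma_{g,b}$ with a fixed boundary condition, namely the one induced by the local model. Away from $D$, twisting by $\cO(-D)$ is trivialized. Hence the fiber is $\cM^{\theta_k}(\Sigma_{g,b},\ell_{\partial\Sigma})$. By Proposition \ref{stabilizationPi0} and obstruction theory, any two boundary conditions extending over $\Sigma_{g,b}$ give equivalent moduli spaces, which justifies ``any choice''.

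For the monodromy, I would enlarge the prong data from a $\bC_{j+k}$-torsor to the corresponding $U(1)$-torsor of unit coframes. Concretely, I would consider the map $\cH^{\otimes k,\top}\to B(\bS\wr U(1)^{b})$ that records the unit coframe at each $x_i$, normalized using a metric so that $t_i^{\otimes j+k}$ is compatible with $s_0(x_i)$ up to a positive scalar. The map $\prong^{>1}$ factors through the natural map $B(\bS\wr\bC)\to B(\bS\wr U(1))$. The homotopy fiber over $B(\bS\wr\bC)$ is the pullback of the fiber over $B\bS$ along a map with fiber $U(1)^b/\bC$. Consequently, the $\bC_{(\mu^{>1})^{+k}}$-action on $\cM^{\theta_k}(\Sigma_{g,b},\ell)$ is the restriction of a $U(1)^b$-action (rotating the collars). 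A connected group acts trivially on homology, so the monodromy is trivial.

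The main obstacle is the last step: making the $U(1)^b$-action honest. One must check that rotating a framing at $x_i$ can be continuously compensated by rotating the section data near $x_i$, via multiplying by a phase that varies through $U(1)$ in the $(j+k)$-th power. Equivalently, the local model $z^j\,dz^{\otimes k}$ must be equivariant for $z\mapsto e^{i\theta}z$ up to a global phase. I would verify this explicitly in the local model before gluing.
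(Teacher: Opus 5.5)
Your identification of the homotopy fiber follows the paper's argument in substance. You treat prongs at the points of $D$ as framings, use contractible local sections on small discs, compare framed-point diffeomorphisms with $\Diff^+_\partial(\Sigma_{g,b})$, and take the boundary condition induced by $z^j\,dz^{\otimes k}$. The gap is in the monodromy step.

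\textbf{The monodromy step.} The inference ``consequently the $\bC$-action on $\cM^{\theta_k}(\Sigma_{g,b},\ell)$ is the restriction of a $U(1)^b$-action'' does not follow. $U(1)^b$ acts on the framed total space $F'$ (the homotopy fiber over $B(\bS\wr U(1)^b)$), not on the fiber. The sequence $\cM^{\theta_k}\to F'\to U(1)^b/\bC$ only identifies the $\bC$-action on $H_*(\cM^{\theta_k})$ with the monodromy of $F'$ over the torus $U(1)^b/\bC$. A finite group sitting inside a torus can act nontrivially in this way, e.g.\ $S^1\times_{\bC_2}S^0\to S^1/\bC_2$. The paper disposes of this step in one line via $\prod_i\bC_i\subseteq\bS_\mu\wr\bbC^*$, so you were right to isolate it.

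\textbf{Why the proposed repair fails for $b\ge 2$.} Write $n_m$ for the value of $j+k$ at $x_m$.
A global phase $e^{2\pi i\tau}$ forces you to rotate every prong, $t_m\mapsto e^{2\pi i\tau/n_m}t_m$. So it only shows that the diagonal element $(e^{2\pi i/n_1},\dots,e^{2\pi i/n_b})$ acts trivially. That element generates $\prod_m\bC_{n_m}$ only when the $n_m$ are pairwise coprime, which fails e.g.\ for two double zeros.
A $U(1)$-valued compensation $\psi_\tau$ with $\psi_\tau(x_r)=e^{2\pi i\tau}$ and $\psi_\tau(x_m)=1$ for $m\neq r$ cannot satisfy $\psi_0=\psi_1=1$. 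Along a path from $x_r$ to $x_m$ it would give a map from a square to $U(1)$ whose boundary has winding number one.
The endpoint of such a path therefore differs from the root-of-unity action by multiplication by a phase with nonzero class in $H^1(\Sigma,\{x_m\};\bbZ)$, e.g.\ a radially winding phase on an annulus around $x_r$. That phase is not null through phases. For the same reason no honest $U(1)$-action arises this way, since a continuous family of characters of $U(1)$ is constant.

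\textbf{A fix.} What is needed is a homotopy rather than an action, with a non-scalar compensation. For $\tau\in[0,1]$, rotate $t_r$ to $e^{2\pi i\tau/n_r}t_r$ and replace $s$ by $M_\tau s$. Here $M_\tau$ is a fiberwise automorphism of $J^1\iso\Omega^{\otimes k}\oplus\Omega^{\otimes k+1}$ with the following properties:
it is supported in the disc $N_r$ and is $SU(2)$-valued in the trivialization of $J^1$ over $N_r$ given by the input coordinate;
at $x_r$ it acts by $e^{2\pi i\tau}$ on the $0$-jet and by $e^{-2\pi i\tau}$ on the $1$-jet;
$M_\tau=1$ near $\partial N_r$, and $M_0=M_1=1$.
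Such an $M$ exists because $\pi_1SU(2)=\pi_2SU(2)=0$. Each stage preserves nonvanishing and the relation $t_r^{\otimes n_r}=s_0(x_r)$, and is natural in the data. So this is a homotopy from the identity to the action of $e^{2\pi i/n_r}$ on the fiber, valid in all degrees. The point is that a $\theta_k$-structure is rank-two jet data with simply connected fiber $\bbC^2\setminus 0$, which a phase-only argument cannot exploit.
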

\begin{proof}

First we replace the base of the fibration $\cH_{g,\mu}^{\otimes k, \top} \to \cM_{g,\mu}$ by the equivalent space $\cM_{g,\mu}^{\rm tub}$ parameterizing curves and divisors $(C,D)$ together with a choice of closed neighborhood $N_i$ around each point $p_i$ in the support of $D$ such that  $\cup_{i} N_i$ is a tubular neighborhood of $\{p_i\}_{i = 1}^b$.

Base changing to $\cM_{g,\mu}^{ \rm tub}$ a section is determined by its value on each $N_i$ and on $C - \cup_{i} \overset{\circ} N_i$,  and so there is a fiber square
\[\begin{tikzcd}
{ \cH_{g, \mu}^{\otimes k, \top}} \arrow[d] \arrow[r]                               & {\uGamma^\top(C -  \cup_{i = 1}^n \overset{\circ} N_i , J^1(\Omega^1)- 0)} \arrow[d]               \\
\prod_{i = 1}^{b} \uGamma^\top(N_i,  J^1 (\Omega^1(-D)) -  T^1 (\Omega^1(-D))) \arrow[r] & {\prod_{i = 1}^{b} \uGamma^\top(\del N_{p_i}, J^1 \Omega^1 - 0)}
\end{tikzcd}. \]
	The products in this diagram denote fiber products over the base $\cM_{g,\mu}^{\rm tub}$.  Here we have used that after restricting to $N_i$ there is a canonical identification $\Omega^1(-D) \iso \Omega^1$.   The right vertical evaluation map is a fibration, and therefore this square is a homotopy fiber square.  Furthermore $ \uGamma^\top(N_i,  J^1 (\Omega^1(-D)) -  T^1 (\Omega^1(-D))) \simeq \uGamma^\top(p_i,  J^1 (\Omega^1(-D)) -  T^1 (\Omega^1(-D))) \simeq  \Omega^1(-D)|_{p_i} - 0 $.  
	
	  Base-changing to  the  $\bS_{\mu^{>1}}\wr \bbC^*$ bundle $\tilde{\cM_{g,\mu^{>1}}} \to \cM_{g,\mu^{>1}}$ corresponding to a choice of labeling and a framing of the points $p_i$, this homotopy fiber square  is equivalent to 
	 	
	\[\begin{tikzcd}
{ \tilde \cH_{g, \mu}^{\otimes k, \top}} \arrow[d] \arrow[r]                               & {\tilde{\uGamma^\top(C - D, J^1(\Omega^1)- 0)}} \arrow[d]               \\
 \tilde{\cM_{g,\mu^{>1}}} \times \prod_{i = 1}^{b}  \bbC^*/\bC_{\mu(i)} \arrow[r] & \tilde{\cM_{g,\mu^{>1}}} \times \prod_{i = 1}^{b} \Top(\bbC^*, \bbC^*),
\end{tikzcd} \] after using the framing and the exponential map to trivialize $\Omega^1|_{p_i}$  and to identify $N_i$ with $\bbC$.  The bottom row of this square is pulled back from $\prod_{i = 1}^{b}  \bbC^*/\bC_{\mu(i)}  \to  \prod_{i = 1}^{b} \Top(\bbC^*, \bbC^*)$ along the projection $\tilde{\cM_{g,\mu^{>1}}} \to *$.  Because $\tilde{\cM_{g,\mu^{>1}}}$ is a classifying space for $\Diff^+(\Sigma_{g,b}, \del \Sigma_{g,b})$, the fiber of $\tilde{\uGamma^\top(C - D, J^1(\Omega^1)- 0)} \to \prod_{i = 1}^b \Top(\bbC^*, \bbC^*)$ above a tuple of maps $\ell$ is equivalent to $\cM^{\theta_k}(\Sigma_{g,b}, \ell)$.  Hence the same is true for the map $$\tilde {\cH_{g, \mu}^{\otimes k, \top}}  \to \prod_{i = 1}^{b}  \bbC^*/\bC_{\mu(i)}.$$ Taking the homotopy quotient by of this map the deck group $\bS_\mu \wr \bbC^*$ gives the prong classifying map, so the homotopy fiber of the prong classifying map is also $\cM^{\theta_k}(\Sigma_{g,b}, \ell)$.  The monodromy action by roots of unity is trivial because $\prod_{i = 1}^b \bC_i \subseteq \bS_\mu \wr \bbC^*$ acts trivially on the homology of $\cM^{\theta_k}(\Sigma_{g,b}, \ell)$.
\end{proof}		

To apply Proposition \ref{ProngFiberSequence}, we must understand the homology of $\cM^{\theta_k}(\Sigma_{g,b}, \ell_{\del \Sigma})$ in the stable range.  By Theorem \ref{RandallWilliams}, it suffices to analyze the homology of the infinite loop space of the Madsen--Tillman spectrum. 

\begin{prop}\label{ThomComputation}
	The rational homology of every component of  $\Omega^\infty \bM \bT(\theta_k)$ vanishes in all degrees $> 0$ (it is rationally contractible).  Localized away from $2k(k+1)$,  $\Omega^\infty \bM \bT(\theta_k)$ splits as a product $\Omega^\infty S^{-2} \times \Omega^\infty S^0$.
\end{prop}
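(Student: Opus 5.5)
We prove both claims by computing the homotopy type of the base of the Thom spectrum. Write $X_k := (O(k)\oplus O(k+1)) - 0$. It is a bundle over $\bbP^\infty$ with fiber $\bbC^2 - 0 \simeq S^3$, so up to homotopy it is the unit sphere bundle of a rank-$2$ complex bundle. By definition $\bM\bT(\theta_k) = \Th(-\theta_k^* O(-1) \to X_k)$.

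First, the cohomology of $X_k$. Apply the Gysin sequence of this $S^3$-bundle. With $x = c_1(O(1))$, the Euler class is $e = c_2(O(k)\oplus O(k+1)) = k(k+1)x^2$. Multiplication by $e$ on $\bbZ[x]$ is injective, so the Gysin sequence gives $H^*(X_k;\bbZ) \iso \bbZ[x]/(k(k+1)x^2)$. Concretely, this is $\bbZ$ in degrees $0$ and $2$, $\bbZ/k(k+1)$ in every even degree $\geq 4$, and zero in odd degrees. Hence after inverting $k(k+1)$ (in particular rationally), $X_k$ has the homology of $S^2$.

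Next, a comparison map $S^2 \to X_k$. The fiber $S^3$ is $2$-connected, so the inclusion $\bbP^1 \subseteq \bbP^\infty$ lifts to a map $j: S^2 = \bbP^1 \to X_k$. This map sends the generator of $H_2(S^2)$ to a generator of $H_2(X_k) \iso \bbZ$. Both spaces are simply connected, so $j$ is an equivalence after localizing away from $k(k+1)$. Pulling back the virtual bundle along $j$ gives a map of Thom spectra $\Th(-O(-1)|_{\bbP^1}) \to \bM\bT(\theta_k)$. By the Thom isomorphism and the Whitehead theorem for connective-after-shift spectra, this map is an equivalence after inverting $k(k+1)$. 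The source is a two-cell spectrum $S^{-2}\cup_\phi e^{0}$, with cells coming from the cells of $\bbP^1$ shifted by $-2$. Its attaching map $\phi$ lies in $\pi_1^s \iso \bbZ/2$; in fact it is $\eta$, though we do not need this. Once $2$ is also inverted, $\phi$ is null, so $\bM\bT(\theta_k) \simeq S^{-2}\vee S^0$ away from $2k(k+1)$. Since $\Omega^\infty$ commutes with localization on connected components and takes finite wedges of spectra to products, this gives the second claim, $\Omega^\infty \bM\bT(\theta_k) \simeq \Omega^\infty S^{-2}\times \Omega^\infty S^0$ away from $2k(k+1)$.

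The rational statement follows from the same picture. Rationally, $\pi_*\bM\bT(\theta_k)\otimes\bbQ \iso H_*(\bM\bT(\theta_k);\bbQ)$, which is $\bbQ$ in degrees $-2$ and $0$ and zero otherwise. The infinite loop space only sees nonnegative homotopy groups, so each component of $\Omega^\infty\bM\bT(\theta_k)$ has vanishing rational homotopy groups. It is an infinite loop space, hence a simple space, so its rational homology also vanishes in positive degrees. (Equivalently, $\Omega^\infty S^{-2}$ and $(\Omega^\infty S^0)_0$ are rationally contractible.)

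The main point requiring care is the localization step. We must check that $j$ induces an equivalence of Thom spectra and not only of base spaces. This follows from naturality of the Thom isomorphism with $\bbZ[1/k(k+1)]$ coefficients, which applies because the virtual bundle on $S^2$ is pulled back from $X_k$. We must also note that the attaching map is $2$-torsion, which is why $2$ has to be inverted in addition to $k(k+1)$.
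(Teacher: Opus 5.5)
Your proposal is correct and follows essentially the paper's route: the Gysin computation $H^*(X_k;\bbZ)\cong\bbZ[x]/(k(k+1)x^2)$, the Thom isomorphism, and the observation that the only obstruction to splitting lies in the $2$-torsion group $\pi_1^s$. The one difference is that you build the cofiber sequence $S^{-2}\to\bM\bT(\theta_k)\to S^0$ explicitly, from a lift $\bbP^1\to X_k$ and the two-cell spectrum $S^{-2}\cup_\eta e^0$, whereas the paper reads it off directly from the localized homology. (Also, $k(k+1)$ is always even, so the inversion of $2$ ``in addition'' to $k(k+1)$ that you flag is automatic.)
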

\begin{proof}
	We compute the integral homology of $\bM \bT(\theta_k)$ using the Thom isomorphism and the Leray--Gysin sequence for the $S^3$ bundle associated to $(O(k) \oplus O(k+1)) - 0 \to \bbP^\infty$.  Since the Euler class of $O(k) \oplus O(k+1)$ is $-(k)(k+1) x^2$ where $x := c_1(O(-1))$, we have that the cohomology ring of $(O(k) \oplus O(k+1)) - 0$ is $\bbZ[x]/(-(k)(k+1) x^2)$.  By universal coefficients and the Thom isomorphism, we obtain that $H_{-2}(\bM \bT(\theta_k)) = \bbZ$, $H_0(\bM \bT(\theta_k)) = \bbZ$ and $H_{2i + 1}(\bM \bT(\theta_k)) = \bbZ/(k(k+1))$ for all $i \geq 0$, and all other homology groups vanish.
	
	  Therefore $\bM \bT(\theta_k)[\frac{1}{2k(k+1)}]$ only has homology in degrees $-2$ and $0$, and fits into a fiber sequence $S^{-2}[\frac{1}{2k(k+1)}] \to \bM \bT(\theta_k)[\frac{1}{2k(k+1)}] \to S^{0}[\frac{1}{2k(k+1)}]$.  Because we have inverted $2$,  the connecting map in $$\left[S^{0}[\frac{1}{2k(k+1)}], S^{-1}[\frac{1}{2k(k+1)}]\right] = \pi_{1}^{\rm st}(S^0)[\frac{1}{2k(k+1)}] = 0 $$ is null-homotopic and so $\bM \bT(\theta_k)[\frac{1}{2k(k+1)}]$ splits as a wedge sum of two localized spheres.   Since $\Omega^\infty$ takes sums to products and preserves localizations of desuspensions of connective spectra, we obtain the result.  
 \end{proof}

\subsection{Conclusions}  We will now prove the results stated in the introduction.

\begin{proof}[Proof of Theorem \ref{mainComputation} and \ref{modPExample}]
	By Theorem \ref{comparison_theorem}, to calculate (co)homology in the stable range, it suffices to compute the stable cohomology of $\cH_{g, \mu}^{\top}$.  To do so, we use Proposition \ref{ThomComputation} (which describes the stable (co)homology of $\cM^{\theta_1}(\Sigma_{g, b}, \ell_{\del \Sigma_{g,b}})$, by Theorem \ref{RandallWilliams})  together with the Leray--Serre spectral sequence for the fibration of Proposition \ref{ProngFiberSequence}.
	
	In the case of rational (co)homology, Theorem \ref{mainComputation}, we have that the higher degree rational homology of the base is trivial, and that of the fiber is trivial in the stable range.  So rational (co)homology vanishing in higher degrees follows.   In the case of Theorem \ref{modPExample}, our assumption on $\mu$ also guarantees that the $\bbF_p$ homology of the base is trivial (as a classifying space of a finite group of order relatively prime to p) and that the monodromy action is trivial.  So the homology of the total space agrees with the homology of the fibers.
\end{proof}

\begin{proof}[Proof of Theorem \ref{stabilizationtheorem}] 
 By Theorem \ref{pointedcomparison},  the map $\cH_{g,\mu}^1 \to (\cH_{g,\mu}^1)^{\rm top}$ induces an isomorphism on $H_i$ for $i \leq \mainbound - 1$.  The homotopy class of stabilization map $\cH_{g,\mu}^1 \to \cH_{g,\mu +1 +1}^1$ constructed in the introduction is compatible with a topological stabilization map $(\cH_{g,\mu}^1)^{\rm top} \to (\cH_{g,\mu+1+1}^1)^{\rm top}$, given by gluing and smoothing surfaces.
 
We have a fibration $ (\cH_{g,\mu}^1)^{\rm top} \to (\cH_{g,\mu})^{\rm top}$, with fiber the total tangent bundle of $C - \supp(D)$, so that the fiber sequence of Proposition \ref{ProngFiberSequence} yields a fiber sequence $$ \cM^{\theta_1}(\Sigma_{g,b+1}, \ell_{\del \Sigma}) \to (\cH_{g,\mu}^1)^{\top} \to B (\bS \wr \bC_{(\mu^{>1})}).$$   Then the topological stablization map corresponds to a map of fibers $\cM^{\theta_1}(\Sigma_{g,b+1}, \ell_{\del \Sigma}) \to \cM^{\theta_1}(\Sigma_{g+1,b+1}, \ell_{\del \Sigma})$, which is a composition of stabilization maps of falling into cases (ii) and (i) of Theorem \ref{RandallWilliams}.  Applying this theorem, we obtain that the topological stabilization map induces an isomorphism on $H_i$ for $i \leq \lfloor 2g/3 \rfloor -1$.  Therefore $H_i(\cH_{g,\mu}^1) \to H_i(\cH_{g,\mu+1+1}^1)$ is an isomorphism for $i \leq \min (\lfloor 2g/3 \rfloor -1, \mainbound - 1) = \mainbound - 1.$. \end{proof}

To compute the Picard group using cohomology, we adapt an argument from Landesman--Levy. 
\begin{prop}
	Let $U$ be a smooth finite type Deligne-Mumford stack, admitting a smooth proper compactification $X$ with closed complement $Z$ of codimension $\geq 1$.  If $H^1(U, \bbQ) = H^2(U, \bbQ) = 0$ then the natural map $\Pic(U)\to H^2(U, \bbZ)$ is an isomorphism. \end{prop}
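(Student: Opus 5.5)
Our plan is to compare $\Pic(U)$ with $\Pic(X)$ and use the Hodge theory of the smooth proper stack $X$, as in Landesman--Levy. Throughout, $H^2$ means the singular cohomology of the analytic stack, and $c_1:\Pic(U)\to H^2(U,\bbZ)$ is the first Chern class, which is the map in the statement.

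\textbf{Injectivity.} Suppose $L\in\Pic(U)$ has $c_1(L)=0$.
\begin{enumerate}
\item Since $X$ is smooth, $L$ extends to a line bundle $\overline L$ on $X$. Every line bundle on $X$ restricting to the trivial bundle on $U$ has the form $\cO(E)$ with $E$ supported on $Z$. Hence there is an exact sequence
\[ \bigoplus_{D\subseteq Z,\ \codim D=1}\bbZ[D]\to\Pic(X)\to\Pic(U)\to 0.\]
\item There is a compatible sequence in cohomology:
\[ H^2_Z(X,\bbZ)\to H^2(X,\bbZ)\to H^2(U,\bbZ).\]
By purity, $H^2_Z(X,\bbZ)$ is freely generated by the fundamental classes of the divisorial components of $Z$, at least after tensoring with $\bbQ$.
\item From $c_1(L)=0$ we get $c_1(\overline L)|_U=0$. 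So $c_1(\overline L)=\sum a_D[D]$, possibly after multiplying by an integer. Replacing $\overline L$ by $\overline L(-\sum a_D D)$ gives an extension whose $c_1$ is torsion.
\item A line bundle on $X$ with torsion $c_1$ lies in $\Pic^\tau(X)$. Its $\Pic^0$-part is governed by $H^1(X,\cO_X)$.
\item Next use $H^1(U,\bbQ)=0$. Since $H^1(X,\bbQ)\hookrightarrow H^1(U,\bbQ)$, the weight-$1$ part injects. So $H^1(X,\bbQ)=0$, and therefore $H^1(X,\cO)=0$ by Hodge theory. Thus $\Pic^0(X)$ is trivial and $\Pic(X)\to H^2(X,\bbZ)$ is injective.
\item An extra bookkeeping step handles torsion. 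A torsion line bundle whose $c_1$ vanishes on $U$ corresponds, via the exponential sequence, to an element of $H^1(U,\cO^*)$ mapping to zero in $H^2(U,\bbZ)$. This is the image of $H^1(U,\cO)$. We must check it contributes nothing to algebraic bundles; algebraic flat line bundles with trivial $c_1$ are torsion characters, controlled by $H^1(U,\bbZ)$. I would do this carefully with the exponential sequence on $X$ and restriction.
\end{enumerate}

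\textbf{Surjectivity.} Use $H^2(U,\bbQ)=0$.
\begin{enumerate}
\item It gives $H^2(U,\bbZ)=H^2(U,\bbZ)_{\rm tors}$.
\item From $H^1(U,\bbQ)=0$, universal coefficients identifies this torsion with $H^1(U,\bbQ/\bbZ)$.
\item Torsion classes in $H^2(U,\bbZ)$ come from $H^1(U,\bbQ/\bbZ)=\Hom(\pi_1(U),\bbQ/\bbZ)$ via the Bockstein. Each character of finite order $n$ defines a rank-one local system, i.e. a flat line bundle. This bundle is algebraic by Riemann existence, and it lies in the $\mu_n$-torsion part of the Kummer sequence
\[ H^1(U,\bbG_m)\to H^1(U,\bbG_m)\xrightarrow{\,n\,}\dots\]
\item Concretely, for $\alpha\in H^2(U,\bbZ)$ with $n\alpha=0$, lift $\alpha$ to $H^1(U,\bbZ/n)=H^1_{\rm et}(U,\mu_n)$. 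The Kummer sequence $H^1(U,\mu_n)\to\Pic(U)[n]$ produces a line bundle.
\item Its Chern class is the Bockstein of the lift, which is $\alpha$, by the comparison between the Kummer and exponential sequences.
\end{enumerate}

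The main obstacle is the interaction of torsion with the Hodge-theoretic steps in the injectivity argument, especially on a Deligne--Mumford stack. To handle it, I would pass to a finite cover, or work with the analytic exponential sequence on $X$ directly. Both $X$ and $U$ are smooth DM stacks, so Hodge theory applies to the coarse space rationally, and the codimension hypothesis on $Z$ is what makes the extension/purity sequence available.
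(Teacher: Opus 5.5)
Your injectivity half is the paper's argument. The paper chases the ladder $\bbZ^{\#\mathrm{irred}(Z)}\to\Pic(X)\to\Pic(U)\to 0$ over $H^{\rm BM}_{2n-2}(Z)\to H^2(X,\bbZ)\to H^2(U,\bbZ)$, using that $c_Z$ is surjective and that $c_X$ is injective. It cites Landesman--Levy for the injectivity of $c_X$; your derivation from $H^1(X,\bbQ)\hookrightarrow H^1(U,\bbQ)=0$ and Hodge theory makes visible where the $H^1$ hypothesis is used.

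Your surjectivity half takes a genuinely different route. The paper stays on the compactification. It shows $H^2(X,\bbZ)\to H^2(U,\bbZ)$ is onto, because the next term $H^{\rm BM}_{2n-3}(Z)\subseteq H^1(Z-Z^{\rm sing})$ is torsion-free while $H^2(U,\bbZ)$ is torsion. Hence $\coker(c_U)\cong\coker(c_X)$. This cokernel is torsion-free, since the exponential sequence on the proper $X$ embeds it in $H^2(X,\cO_X)$, and a torsion-free quotient of a torsion group is zero.

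You instead lift an $n$-torsion class through $\beta_n\colon H^1(U,\bbZ/n)=H^1_{\rm et}(U,\mu_n)\to H^2(U,\bbZ)$ and realize it as $c_1$ of a Kummer line bundle. This uses only $H^2(U,\bbQ)=0$. It needs neither $X$ nor $H^1(U,\bbQ)=0$: exactness of $H^1(U,\bbZ/n)\to H^2(U,\bbZ)\xrightarrow{n}H^2(U,\bbZ)$ already gives the lift, so your step 2 is superfluous. It also proves the more general fact that torsion in $H^2(U,\bbZ)$ is always algebraic on a smooth DM stack. The paper's route gets both halves from one diagram chase, at the cost of its two torsion-freeness inputs.

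There is one loose end. Your step 6 is unnecessary, and as phrased it aims at the wrong object: the analytic group $H^1(U^{\rm an},\cO)$ of a noncompact $U$ says nothing directly about algebraic bundles.

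Purity holds integrally. $H^2_Z(X,\bbZ)$ is free on the Thom classes of the divisorial components of $Z$, and these map to the classes $c_1(\cO(D))$; this is the paper's surjectivity of $c_Z$. So in step 3 you get $c_1(\overline L(-\sum a_D D))=0$ exactly, with no multiplication by an integer, and injectivity of $c_X$ finishes.

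If you insist on working rationally, you only learn that $L$ is torsion in $\Pic(U)$. Close this with the Kummer sequence you already use. $L$ comes from some $\xi\in H^1(U,\mu_n)$ with $\beta_n(\xi)=c_1(L)=0$, so $\xi$ lifts to $H^1(U,\bbZ)=\mathrm{Hom}(H_1(U),\bbZ)\subseteq H^1(U,\bbQ)=0$. Hence $\xi=0$ and $L$ is trivial.
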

\begin{proof}

We use the morphism of Gysin sequences with exact rows, given by the cycle class map.   
\[\begin{tikzcd}
\mathbb Z^{\#{\rm irred}(Z)} \arrow[d, "c_Z"] \arrow[r] & {\rm Pic}(X) \arrow[r] \arrow[d,"c_X"] & {\rm Pic}(U) \arrow[r] \arrow[d,"c_U"] & 0 \arrow[d]          \\
H_{2n - 2}^{\rm BM}(Z) \arrow[r]                 & H^2(X) \arrow[r]                 & H^2(U) \arrow[r]                 & H^{\rm BM}_{2n-3}(Z)
\end{tikzcd}	\]	
See the discussion after equation (7.2) of  Landesman--Levy \cite{LandLev} for the upper row.  The lower row follows from the identification  $H^*(Z,i^! \bbZ) = H^*(Z, i^! \omega_X[-2n]) = H^*(Z, \omega_Z [-2n]) = H_*^{\rm BM}(Z)[2n]$.

The group $H^{\rm BM}_{2n-3}(Z)$ injects into $H^{\rm BM}_{2n-3}(Z- Z^{\rm sing}) \iso H^1(Z- Z^{\rm sing})$ and is therefore torsion-free.   So the hypothesis that $H^2(U)$ is torsion implies that we may replace  $H^{\rm BM}_{2n-3}(Z)$ by zero and remain exact.   Furthermore by \cite[Proposition 7.2.2]{LandLev} $c_X$ is an injection, and by the exponential sequence the quotient is torsion-free.  Since $c_Z$ is surjective, it follows by a diagram-chase (or by considering the spectral sequences associated to the displayed bicomplex) that there is a short exact sequence  $$0 \to \Pic(U) \to H^2(U,\bbZ) \to \coker(c_X) \to 0.$$ But because $H^2(U, \bbZ)$ is torsion and $\coker(c_X)$ is torsion free, we have $\coker(c_X) = 0$, concluding the proof.	
\end{proof}

\printbibliography

\end{document}